\newtheorem{thm}{Theorem}[section]
\newtheorem{lem}[thm]{Lemma}
\newtheorem{prop}[thm]{Proposition}
\newtheorem{cor}[thm]{Corollary}
\theoremstyle{definition}
\newtheorem{rem}[thm]{Remark}
\numberwithin{equation}{section}
\newcommand{\R}{\mathbb{R}}
\newcommand{\IC}{\mathbb{C}}
\newcommand{\IZ}{\mathbb{Z}}
\newcommand{\cS}{\mathcal{S}} %Schwartz space
\newcommand{\cF}{\mathcal{F}} %Fourier transform
\newcommand{\cL}{\mathcal{L}}
\newcommand{\loc}{\operatorname{loc}}
\renewcommand{\L}{\operatorname{L}} %Lebesgue spaces
\newcommand{\Lloc}{\L_{\operatorname{loc}}} %local Lebesgue spaces
\newcommand{\C}{\operatorname{C}} %spaces of continuous functions
\renewcommand{\H}{\operatorname{H}} %Sobolev spaces, spaces of holomorphic functions
\newcommand{\W}{\operatorname{W}}% Sobolev spaes
\newcommand{\ree}{{\mathbb{R}^{n+1}}}
\renewcommand{\div}{\operatorname{div}}
\newcommand{\dhalf}{D_t^{1/2}} %half time derivative
\newcommand{\HT}{H_t} %Hilbert transform
\let\ii\i
\renewcommand{\i}{\mathrm{i}} %imaginary unit
\newcommand{\eps}{\varepsilon} %epsilon
\renewcommand{\d}{\mathrm{d}}
\renewcommand\Re{\operatorname{Re}}
\newcommand{\blank}{\mathrel{\,\cdot\,}} %Dummy variable
\newcommand{\Lop}{\mathcal{L}} %bounded linear operators
\newcommand{\cl}[1]{\overline{#1}} %closure
\newcommand{\dual}[2]{\langle #1,#2 \rangle}
\newcommand{\sgn}{\operatorname{sgn}}
\newcommand{\con}[1]{\overline{#1}}
\newcommand{\pd}{\partial}
\def\Xint#1{\mathchoice
{\XXint\displaystyle\textstyle{#1}}%
{\XXint\textstyle\scriptstyle{#1}}%
{\XXint\scriptstyle\scriptscriptstyle{#1}}%
{\XXint\scriptscriptstyle%
\scriptscriptstyle{#1}}%
\!\int}
\def\XXint#1#2#3{{\setbox0=\hbox{$#1{#2#3}{%
\int}$ }
\vcenter{\hbox{$#2#3$ }}\kern-.6\wd0}}
\def\barint{\,\Xint -} % \, corrects the \! used in the definition
\def\bariint{\barint_{} \kern-.4em \barint}
\def\bariiint{\bariint_{} \kern-.4em \barint}
\renewcommand{\iint}{\int_{}\kern-.34em \int} %\ minor space between the integrals
\renewcommand{\iiint}{\iint_{}\kern-.34em \int} %\ minor space between the integrals
\newenvironment{abstracts}{%
  \ifx\maketitle\relax
    \ClassWarning{\@classname}{Abstract should precede
      \protect\maketitle\space in AMS document classes; reported}%
  \fi
  \global\setbox\abstractbox=\vtop \bgroup
    \normalfont\Small
    \list{}{\labelwidth\z@
      \leftmargin3pc \rightmargin\leftmargin
      \listparindent\normalparindent \itemindent\z@
      \parsep\z@ \@plus\p@
      
      \itemsep\medskipamount
    }%
}{%
  \endlist\egroup
  \ifx\@setabstract\relax \@setabstracta \fi
}
\newcommand{\abstractin}[1]{%
  \otherlanguage{#1}%
  \item[\hskip\labelsep\scshape\abstractname.]%
}
\title[]{On regularity of weak solutions to linear parabolic systems with measurable coefficients}
\author{Pascal Auscher}
\author{Simon Bortz}
\author{Moritz Egert}
\author{Olli Saari}
\address{Laboratoire de Math\'{e}matiques d'Orsay, Univ. Paris-Sud, CNRS, Universit\'{e} Paris-Saclay, 91405 Orsay, France \vspace{5pt}\newline\vspace{5pt}{\rm and}\newline Laboratoire Ami\'{e}nois de Math\'{e}matique Fondamentale et Appliqu\'{e}e, UMR 7352 du CNRS, Universit\'{e} de Picardie-Jules Verne, 80039 Amiens, France}
\email{pascal.auscher@math.u-psud.fr}
\address{Laboratoire de Math\'{e}matiques d'Orsay, Univ. Paris-Sud, CNRS, Universit\'{e} Paris-Saclay, 91405 Orsay, France}
\email{moritz.egert@math.u-psud.fr}
\address{School of Mathematics, University of Minnesota, Minneapolis, MN 55455, USA}
\email{bortz010@umn.edu} 
\address{Department of Mathematics and Systems Analysis, Aalto University, FI-00076 Aalto, Finland \vspace{5pt}\newline\vspace{5pt}{\rm and}\newline Mathematical Institute, University of Bonn, 53115 Bonn, Germany
}
\email{saari@math.uni-bonn.de} 
\thanks{The first and third authors were partially supported by the ANR project ``Harmonic Analysis at its Boundaries'', ANR-12-BS01-0013. This material is based upon work supported by National Science Foundation under Grant No.\ DMS-1440140 while the  authors were in residence at the MSRI in Berkeley, California, during the Spring 2017 semester. The second author was supported by the NSF INSPIRE Award DMS-1344235. The third author was supported by a public grant as part of the FMJH}
\subjclass[2010]{Primary: 35K40, 26B35. Secondary: 35A15, 26A33.} %primary: 2nd order parabolic systems, Holder continuity. secondary: PDE variational methods, fractional derivatives
\date{\today}
\dedicatory{}
\keywords{Parabolic systems, weak solutions, local H\"older regularity, reverse H\"older estimates, self-improvement properties, fractional derivatives}
\begin{document}
\begin{abstracts}
\abstractin{english}
We establish a  new regularity property for weak solutions of linear parabolic systems with coefficients depending measurably on time as well as on all spatial variables. Namely, weak solutions are {locally} H\"older continuous $\L^p$ valued functions for some $p>2$.

\abstractin{french}
On d\'emontre une nouvelle propri\'et\'e de r\'egularit\'e des solutions faibles des syst\`emes paraboliques dont les coefficients d\'ependent de fa\c con mesurable du temps et des variables spatiales. Pr\'ecis\'ement, on montre que ces solutions sont localement H\"older continues comme fonctions \`a valeurs dans un espace $\L^{p}$ pour un $p>2$. 
\end{abstracts}

\maketitle

\section{Introduction}

This work is concerned with local regularity of weak solutions to linear parabolic equations or systems in divergence form, 
\begin{align}
 \label{eq1}
 \partial_t u -\div_{x} A(t,x)\nabla_{x} u = f+\div_{x} F,
\end{align}
in absence of any regularity of the coefficients besides measurability. The system is considered in an open parabolic cylinder $(t,x) \in \Omega \subseteq \R \times \R^n$, $n \geq 1$, ellipticity is imposed in the sense of a weak G\aa{}rding inequality and weak solutions belong to the usual Lions class, that is to say, $u$ and $\nabla_{x} u$ are locally square-integrable. We note at this stage that we do not impose solutions to be locally bounded in any sense.
 
The topic has a long history, probably starting with the famous results of Nash~\cite{Nash} and Moser~\cite{Moser} that weak solutions to parabolic equations with real coefficients are H\"older continuous with respect to the parabolic distance. This is not true for equations with complex coefficients let alone systems, even in dimension $n = 2$. First counterexamples were constructed by Frehse--Meinel~\cite{FM} in dimension $n \geq 3$ and very recently by Mooney~\cite{Mooney} in dimension $2$.
Results obtaining H\"older estimates beyond the Nash--Moser theorem have mostly focused on either quasilinear equations and systems with more regular coefficients, notably $\C^1$-regular in time and space~\cite{BF1} or absolutely continuous in time~\cite{FS-N}, and systems with a very specially structured real coefficient matrix such as the diagonal systems in~\cite{BF2}. As for most general linear parabolic systems, which are considered in this work, Lions~\cite{Li57} showed continuity in time valued in spatial $\Lloc^2$. This was improved to $\Lloc^p$ for some $p>2$ in the case of real coefficients by Ne\v cas--Sver\'ak~\cite{Necas-Sverak}, and Giaquinta--Struwe obtained the local higher integrability of $\nabla_x u$ in \cite{GS}. In this paper, we reveal a new regularity property of weak solutions. In its simplest form it can be stated as follows.

\begin{thm} 
\label{thm:holder} 
If $u$ is a local weak solution to the homogeneous system $\partial_t u -\div_{x} A(t,x)\nabla_{x} u = 0$, then in time $u$ is locally bounded and H\"older continuous with values in spatial $\Lloc^p$ for some $p>2$. 
\end{thm}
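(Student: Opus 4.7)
\medskip\noindent\textbf{Proof plan.} The plan is to show that $u$ has a fractional time derivative of order $1/2$ in $\Lloc^p$ in space-time, for some $p>2$, from which H\"older continuity in time valued in spatial $\L^p$ follows by a one-dimensional fractional Sobolev embedding. First, by testing the equation against $u$ times a spatial cutoff, one obtains the parabolic Caccioppoli inequality, and combining it with a Poincar\'e--Sobolev inequality yields a parabolic reverse H\"older estimate for $\gradx u$ on sub-cylinders of $\Omega$. A Gehring-type self-improvement lemma then gives $\gradx u \in \Lloc^{p_0}$ in space-time for some $p_0>2$, recovering the higher integrability of Giaquinta--Struwe~\cite{GS}.

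The second step uses the equation itself to extract half a derivative in time. Testing $\partial_t u - \div_x(A \gradx u) = 0$ against time-regularized test functions built from the Hilbert transform $\HT$ and the fractional integral $\ihalf$ (whose $\L^2$ duality with $\dhalf$ is the natural pairing) yields a Caccioppoli-type estimate of the form
\begin{equation*}
 \iint_{Q} \big|\dhalf(\eta u)\big|^2 \, \d t\, \d x \,\leq\, C \iint_{Q'} \big(|u|^2 + |\gradx u|^2 \big)\, \d t\, \d x
\end{equation*}
on concentric parabolic cylinders $Q \subset Q'$, with a suitable time cutoff $\eta$. This is the Lions-type gain of half a derivative in time in $\L^2$. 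The heart of the argument is then to boost this $\L^2$ information to $\L^p$ for some $p>2$: from it we derive a reverse H\"older inequality for $\dhalf u$ on parabolic cylinders and couple it with the reverse H\"older inequality for $\gradx u$ from Step~1, running a simultaneous Gehring self-improvement. The main obstacle lies precisely here: the operator $\dhalf$ is non-local, so compatibility with parabolic cutoffs forces a careful treatment of off-diagonal and tail contributions, which have to be reabsorbed via the equation into the spatial gradient. This is the fractional analogue of the Giaquinta--Struwe scheme and is the most delicate step.

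Once $\dhalf u,\ \gradx u \in \Lloc^p$ in space-time for some $p > 2$, the one-dimensional fractional Sobolev embedding $\W^{1/2,p}(I) \hookrightarrow \C^{0,1/2 - 1/p}(I)$, valid because $(1/2)p > 1$, applied for almost every $x$ in a ball $B$ and then integrated in space using Minkowski's inequality, gives that $t \mapsto u(t,\cdot)$ is H\"older continuous on $I$ with values in $\L^p(B)$, with exponent $\alpha = 1/2 - 1/p > 0$. Local boundedness in time is immediate from this continuity. The threshold $p>2$ that we inherit from Step~3 is exactly what is needed to cross the embedding threshold in time in Step~4, which is why it is crucial to perform the self-improvement of $\dhalf u$ strictly above the exponent $2$ rather than merely to work at the endpoint.
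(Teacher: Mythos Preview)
Your plan is essentially the paper's real-analysis proof (Sections~5--6 and~8): a non-local reverse H\"older inequality for the combined quantity $|\nabla v|+|\dhalf v|+|\HT\dhalf v|$, a purpose-built Gehring lemma ``with tail'' to handle the off-diagonal contributions you anticipate in Step~3, and then a fractional Poincar\'e/Campanato argument in time to conclude H\"older continuity with exponent $\tfrac12-\tfrac1p$. The one structural point the paper stresses and you leave implicit is that the whole scheme is run \emph{globally}: one first sets $v=\chi u$ and views it as a solution of an inhomogeneous equation on all of $\ree$, so that the Hilbert-transform coercivity trick $(1+\delta\HT)$ and the half-derivative calculus are available on the full line; the authors explicitly remark that the local Lions-type variational methods do not seem to give access to the result, so your Step~2 testing should be read in that global sense. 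The paper also supplies an independent operator-theoretic proof (Section~7) via {\v S}ne{\u\i}berg extrapolation of the invertibility of $\partial_t-\div A\nabla+\kappa+1:E_p\to (E_{p'})^*$, which bypasses the Gehring machinery entirely.
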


Most of the regularity properties of solutions to parabolic systems have been established through the local variational methods emerging from the Lions theory \cite{Li57}. It does not seem that those methods give access to our result. Instead we rely on a global variational approach based  on this simple observation: 
We can extend the local solution $u$ via multiplication with a smooth cut-off to a global function $v$ and study the corresponding inhomogeneous problem
\begin{align}
 \label{eq2}
 \partial_t v -\div_{x} A(t,x)\nabla_{x} v = \widetilde f+\div_{x} \widetilde F,
\end{align}
now on all of $\ree$. Indeed, any local information of $v$ carries over to $u$, but the global setup with the real line describing the time enables to bring powerful tools such as singular integral operators and the Fourier transform into play. Most notably, splitting $\partial_t = \dhalf \HT \dhalf$ according to the Fourier decomposition $\i \tau = |\tau|^{1/2} (\i \sgn(\tau)) |\tau|^{1/2}$, there is a sesquilinear form
\begin{align}
\label{eq:sesqui}
 a(v,\phi) = \iint_\ree A \nabla_{x} v \cdot \cl{\nabla_{x} \phi} + \HT \dhalf v \cdot \cl{\dhalf \phi} \; \d x \, \d t
\end{align}
corresponding to \eqref{eq2} which, in contrast to the Lions theory on finite time intervals, admits a hidden coercivity on a natural energy space. This uses the algebraic properties of the Hilbert transform on the real line in a crucial way. See Lemma~\ref{lem:2} below for details. These observations are not new but it does not seem they have been fully exploited for obtaining local regularity of solutions up to now. To the best of our knowledge, the idea first appeared in the work of Kaplan~\cite{Kaplan}. This idea was rediscovered by Hofmann--Lewis~\cite{HL} in the context of parabolic boundary value problems (see also \cite{AEN} and the references therein) and has recently played a role in the realm of non-autonomous maximal regularity~\cite{AE, Dier-Zacher}. Half-order derivatives on finite time intervals were also studied in~\cite[Ch.~VII]{LSU}.

Compared to the local variational approach, where the $t$-derivative is understood in a weak sense only through the equation (see Section~\ref{sec:notation}),  we can 
now study the exact amount of differentiability that $v$ should admit on the global level through a locally integrable function -- the fractional derivative $\dhalf v(t,x)$. In fact, we have \emph{a priori} a parabolic differential 
\begin{align*}
 |Dv| := |\nabla_{x} v|+  |\HT \dhalf v| + |\dhalf v|+ |v| \in \L^2(\R ; \L^2(\R^n)).
\end{align*}
Since time is a one-dimensional variable, square-integrability of $\dhalf v$ is already the borderline case from the view-point of Sobolev embeddings,  not enough for continuity in time though, which probably explains why $\dhalf v$ has not been exploited before. On the other hand, higher integrability of $\dhalf v$ would yield H\"older continuity in time. Indeed most of this paper is devoted to establishing the self-improvement of integrability for the spatial gradient and the half-order time derivative simultaneously, that is to say,
\begin{align}
\label{eq:improvement}
 |Dv| \in \L^p(\R ; \L^p(\R^n)), \quad \text{for some $p>2$}.
\end{align}
We  present two proofs relying on rather different methods, both using the global variational formulation explained above. We think they each have their own interest, with potential applicability to non-linear systems for the first one and to other types of parabolic equations as well as fractional elliptic equations for the second one. 

\subsection{Strategy of the proofs}

In Section~\ref{sec:proof1} we present a real analysis proof of \eqref{eq:improvement}. The idea is to use, as in the analogous result for elliptic equations \cite{Meyers}, self-improvement properties of reverse H\"older inequalities known as Gehring's lemma. First, we prove a new and delicate reverse H\"older inequality for $Dv$, by extending ideas from \cite{AEN}. The non-locality of the fractional derivative reflects in local $\L^2$ averages of $Dv$ being controlled only by a weighted \emph{infinite} sum of $\L^1$ averages. 
Hence, we need a substantial extension of the classical Gehring lemma, which we shall prove in Section~\ref{sec:Gehring} and could be of independent interest. An unrelated Gehring type lemma ``with tail'' recently obtained in the context of fractional elliptic equations \cite{KMS} has been inspiring to us. We mention that we shall explore further such extensions in a forthcoming work \cite{ABES2017}. For other modifications of the local parabolic Gehring lemma we refer to \cite{Gehring-Parabolic} and references therein.

In Section~\ref{sec:proof2} we present an operator theoretic proof of \eqref{eq:improvement}. We consider the operator $\cL$ associated with the sesquilinear form \eqref{eq:sesqui} in virtue of the Lax-Milgram lemma and use an analytic perturbation argument. More precisely, exploiting the hidden coercivity in a crucial way, $\cL$ plus a large constant turns out to be invertible from the natural $\L^2$ energy space to its dual and extends boundedly to the corresponding $\L^p$-based spaces. The higher integrability of $Dv$ then follows from the fact that invertibility of a bounded operator between complex interpolation scales  extrapolates. The latter is known as {\v{S}}ne{\u\ii}berg's theorem~\cite{Sneiberg-Original}.

\subsection{Main results}

All this is for homogeneous equations so far, that is $f=0$ and $F=0$. However, as the extension to $\ree$ forces us to work with inhomogeneous equations anyway, there is no real obstacle to start with an inhomogeneous equation right away. Here we give an informal description of our main results. Precise theorems and their proofs are found in Sections~\ref{sec:localEstimates} and \ref{sec:proof1} - \ref{sec:proof3}.

We consider weak solutions in $\Omega = I_0 \times Q_0$ to \eqref{eq1} under the assumptions $f\in \Lloc^{q'}(\Omega)$ and $F\in \Lloc^2(\Omega)$, where $q= 2 + 4/n$ and $q'$ is its H\"older conjugate. 

It is worth mentioning that $q'<2$ and hence we go beyond the usual Lions variational approach that uses $\partial_{t}u\in \Lloc^2(I_{0}; \W_{\loc}^{-1,2}(Q_{0}))$ as a starting point to obtain in-time continuity of $u$ valued in spatial $\Lloc^2$. We could have made $u\in \Lloc^\infty(I_{0}; \Lloc^2(Q_{0}))$ an assumption as for instance in \cite{NW} but in fact -- and this is an observation we have not found in the literature -- this uniform local $\L^2$ boundedness in space and the local $\L^q$ integrability both follow from the hypotheses, again thanks to the global variational approach and the use of half-order derivatives. More precisely, we are still able to obtain the ``classical'' cornerstones of the Lions theory:

\begin{enumerate}
 \item A Caccioppoli inequality (Proposition~\ref{ref:cac}).
 \item In-time continuity of $u$ with values in spatial $\Lloc^2$ (Theorem~\ref{thm:reg-i}).
 \item Higher $\L^q$-integrability for $u$ in time and space through a reverse H\"older inequality (Proposition~\ref{ref:basic}).
\end{enumerate}

Next, if $p>2$ is sufficiently close to $2$, depending only on ellipticity and dimensions, then we have under the assumptions $f \in \Lloc^{p_*}(\Omega)$ and $F \in \Lloc^p(\Omega)$ the following improvements. Here, $p_* = (n+2)p/(n+2+p)$.

\begin{enumerate}
\setcounter{enumi}{3}
\item $\L^p$-control of $|\dhalf(u \chi)| + |\HT \dhalf(u\chi)|$ for any smooth and compactly supported $\chi$ (Theorem~\ref{thm:reg}).
 \item Control of the $(1/2 - 1/p)$-H\"older modulus of continuity in time of the spatial $\L^p$-norm of $u$ (Theorem~\ref{thm:RH grad}).
  \item Higher $\L^p$-integrability for $|\nabla u|$ through a reverse H\"older inequality (Theorem~\ref{thm:GS}).

 \end{enumerate}
As explained before, (iv) and (v) are completely new in this generality and the main contribution of this work. Moreover, (vi) was first obtained in \cite{GS} when $f=0,F=0$ by means of the classical Gehring lemma and was generalized to non zero right-hand side in \cite{C}, but with stronger requirements on $f$ and $F=0$. Such results have impact on partial regularity of nonlinear systems \cite{GG, GS,C}.

In the following Section~\ref{sec:notation} we introduce relevant notation. The global variational setup using \eqref{eq:sesqui} is discussed afterwards in Section~\ref{sec:i}.

\subsection*{Acknowledgement}
We thank the anonymous referees for pointing out relevant literature.
%%%%%%%%%%%%%%%%%%%%%%%%%%%%%%%%%%%%%%%%%%%%%%%%%%%%%%%%%%%%%%%%%%%%%%%%%%%%%%%%%%%%%%%%%%%%%%%%%%%%%%%%%%%%%%%%%%%%%%%%%%%%%%%%%%%%%%%%%%%%%%%%%%%%%%%%%%%%%%%%%%%%%%%%
\section{Notation and basic definitions}
\label{sec:notation}

Most of our notation is standard. One exception is that for $X$ a Banach space we let $X^*$ be the (anti-)dual space of conjugate linear functionals on $X$. For exponents $p \in (1,\infty)$ we define the upper and lower \emph{Sobolev conjugate} $p^*$ and $p_*$ with respect to parabolic scaling and the \emph{H\"older conjugate} $p'$ through the relations
\begin{align*}
 \frac{1}{p^*} = \frac{1}{p} - \frac{1}{n+2}, \qquad \frac{1}{p_*} = \frac{1}{p} + \frac{1}{n+2}, \qquad \frac{1}{p'} = 1 - \frac{1}{p},
\end{align*}
whenever they belong again to the interval $(1,\infty)$. Hence, for the exponent $q$ used above we have $q=2^*$ and $q'=2_*$.
With regard to parabolic systems and their weak solutions, we use the following notions.

\subsection{Ellipticity}

In what follows we assume bounded, measurable, complex valued coefficients
\begin{equation}   
\label{eq:boundedmatrix}
  A(t,x)=(A_{i,j}^{\alpha,\beta}(t,x))_{i,j=1,\ldots, n}^{\alpha,\beta= 1,\ldots,m}\in \L^\infty(\ree;\cL(\IC^{mn}))
\end{equation}  
and that there exist $\lambda >0$ and $\kappa \geq 0$ such that the (weak) G\aa rding inequality 
\begin{equation}   
\label{eq:accrassumption}
  \Re \int_{\R^n} A(t,x)\nabla_x u(x)\cdot  \con{\nabla_x u(x)} \;  \d x\ge \lambda 
   \int_{\R^n} |\nabla_{x} u(x)|^2 \; \d x - \kappa  \int_{\R^n} | u(x)|^2 \; \d x
\end{equation}
holds for all $u \in \W^{1,2}(\R^n; \IC^m)$, uniformly in $t\in \R$. Our notation is 
\begin{align*}
 A(t,x)\nabla_x u(x)\cdot  \con{\nabla_x u(x)} := \sum_{\substack{i,j=1,\ldots, n \\ \alpha,\beta= 1,\ldots,m}} A_{i,j}^{\alpha,\beta}(t,x) \pd_{j}u^\beta(x)\con{\pd_{i}u^\alpha(x)},
\end{align*}
where for the sake of readability we shall usually stick to the summation convention on repeated indices and do not write out sums explicitly. We shall refer to $\lambda, \kappa $ and an upper bound for the $\L^\infty$-norm of $A$ as \emph{ellipticity} and to $n$ and the number $m \geq 1$ of equations as \emph{dimensions}.

Let us remark that for the local results we are after, it is no restriction to define $A$ on all of $\ree$. Indeed, if, for some open interval $I_{0}\subset \R$ and ball $Q_{0}\subset \R^n$, coefficients $A\in \L^\infty(I_0 \times Q_0;\cL(\IC^{mn}))$ satisfy \eqref{eq:accrassumption} only for $u \in \W_0^{1,2}(Q_0; \IC^m)$ uniformly in $t \in I_0$, then given $\eps \in (0,1)$, there exists $\widetilde A$ with $\widetilde A = A$ on $ (1-\eps)^2I_0 \times (1-\eps) Q_0 $ that satisfies \eqref{eq:accrassumption}. The ellipticity constants for $\widetilde A$ are possibly different and may depend on $\eps$, see Lemma~\ref{lem::extendGarding} in the appendix.

\subsection{Weak solutions}

Let $I_{0}$ be an open interval, $Q_{0}$ be an open ball of $\R^n$ and $\Omega := I_{0} \times Q_{0}$. We denote by $\ell(I_0)$ the length of $I_0$ and by $r(Q_0)$ the radius of $Q_{0}$. Given $f\in \Lloc^{1}(\Omega; \IC^{m})$ and $F\in \Lloc^{1}(\Omega; \IC^{mn}) $, we say that $u$ is a \emph{weak solution} to 
\begin{align*}
 \partial_t u -\div_{x} A(t,x)\nabla_{x} u = f+\div_x F
\end{align*}
in $\Omega$ if $u\in \Lloc^2(I_{0}, \W_{\loc}^{1,2}(Q_{0}; \IC^m))$ and for all smooth functions with compact support $\phi \in \C^\infty_{0}(\Omega;\IC^m)$,
\begin{align}
\label{eq:system}
\begin{split}
  \iint_{\Omega} A(t,x)\nabla_{x} u(t,x)\cdot\overline{\nabla_{x} \phi(t,x)} \; \d x \, \d t - \iint_{\Omega} u(t,x) \cdot \cl{\partial_{t}\phi(t,x)} \; \d x \, \d t \\  =  \iint_{\Omega} f(t,x) \cdot \cl{\phi(t,x)} \; \d x \, \d t -  \iint_{\Omega} F(t,x)\cdot\overline{\nabla_{x} \phi(t,x)} \; \d x \, \d t.
\end{split}
\end{align}
Here, $F \cdot \overline{\nabla_x \phi}$ is short for $F^{\alpha,i} \overline{\partial_i \phi^\alpha}$.

Having posed the setup, whenever the context is clear we are going to ignore the target spaces $\IC^m$ or $\IC^{mn}$ in our notation and do not write the Lebesgue measures $\d x$ and $\d t$. We abbreviate $\nabla:= \nabla_x$ and $\div:= \div_{x}$ for the gradient and divergence in the spatial variables $x$, respectively.

\subsection{Fractional time derivatives and related spaces}

In the following $\dhalf$ and $\HT$ denote the \emph{half-order time derivative} and \emph{Hilbert transform in time} defined on $\cS'(\R)/\IC$, the tempered distributions modulo constants, through the Fourier symbols $|\tau|^{1/2}$ and  $\i \sgn(\tau)$, respectively. For summarizing properties see for example Section~3 in \cite{AEN}. In particular, the time derivative factorizes as $\partial_t = \dhalf \HT \dhalf$. 

We shall use the space $\H^{1/2}(\R; \L^2(\R^n))$ of functions in $\L^2(\ree)$ such that $\dhalf f\in \L^2(\ree)$. Here, we identify $\L^2(\ree)$ with $\L^2(\R; \L^2(\R^n))$, and having said this, we extend $\dhalf$ and $\HT$ to $\ree$ by acting only on the time variable. 

More generally, for $1<p<\infty$ we introduce the spaces $\H^{1/2,p}(\R; \L^p(\R^n))$ of functions in $\L^p(\ree)$ such that $\dhalf f\in \L^p(\ree)$ with norm $(\|f\|_{p}^p+ \|\dhalf f\|_{p}^p)^{1/p}$. For the sake of completeness only, we remark that up to equivalent norms these are the (vector-valued) Bessel potential spaces usually denoted by the same symbol \cite{BL}. We also note that $\C^\infty_{0}(\ree)$, the space of smooth and compactly supported functions, is dense in these spaces using smooth convolution and truncation. Lebesgue space norms are denoted with the usual symbol $\| \cdot \|_{p}$.
%%%%%%%%%%%%%%%%%%%%%%%%%%%%%%%%%%%%%%%%%%%%%%%%%%%%%%%%%%%%%%%%%%%%%%%%%%%%%%%%%%%%%%%%%%%%%%%%%%%%%%%%%%%%%%%%%%%%%%%%%%%%%%%%%%%%%%%%%%%%%%%%%%%%%%%%%%%%%%%%%%%%%%%%
\section{The global variational setup}
\label{sec:i}

Our starting point is the parabolic problem on the whole space $\ree$. We define the Hilbert space 
\begin{align*}
 V :=\L^2(\R; \W^{1,2}(\R^n))
\end{align*}
with norm $\|u\|_{V}:= (\|u\|_{2}^2 + \|\nabla u \|_{2}^2)^{1/2}$. This is the natural space for global weak solutions to parabolic problems. We recall that $2^*=2+\frac{4}{n}$ is the upper Sobolev conjugate of $2$ and that $(2^*)' = 2_*$ is its dual exponent. 

The following proposition summarizes the properties of global weak solutions.

\begin{prop}\label{prop:1} Let $f\in \L^{2_*}(\ree)$ and $F\in \L^2(\ree)$. Consider    $v\in V$ a weak solution to 
  $\partial_t v -\div A(t,x)\nabla v =  f+ \div  F$ in $\ree$. Then
  \begin{enumerate}
  \item $v\in \H^{1/2}(\R; \L^2(\R^n))$, 
  \item $v\in \L^{2^*}(\ree)$, 
  \item $v\in \C_{0}(\R; \L^2(\R^n))$ and $t\mapsto \|v(t,\blank)\|_{2}^2$ is absolutely continuous on $\R$,
\end{enumerate}
with 
$$
\sup_{t\in \R} \|v(t,\blank)\|_{2}+ \|v\|_{2^*}+ \|\dhalf v\|_{2}\lesssim \|v\|_{V}+ \|f\|_{2_*}+   \|F\|_{2}.
$$
The implicit constant depends only on dimensions and ellipticity. 
\end{prop}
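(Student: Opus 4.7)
The plan is to exploit the hidden coercivity encoded in the Fourier identity $(\i\tau)(-\i\sgn\tau)=|\tau|$, i.e.\ the fact that pairing the equation against $\HT v$ produces exactly $\|\dhalf v\|_2^2$. I would establish the three claims in the order (iii), (ii), (i), with the sup bound from (iii) feeding into the estimate of $\dhalf v$ via the Sobolev embedding.

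For (iii), I would derive an energy identity. From the equation, $\partial_t v = f + \div(A\nabla v + F)$ as a distribution on $\ree$. A Lions-type lemma (adapted to handle $f\in\L^{2_*}(\ree)$, which is not contained in $\L^2(\R;\W^{-1,2}(\R^n))$) gives $v\in \C(\R;\L^2(\R^n))$ together with the identity
$$\tfrac12\|v(t,\blank)\|_2^2 - \tfrac12\|v(s,\blank)\|_2^2 = \Re\int_s^t \Bigl[-\!\int A\nabla v\cdot\con{\nabla v} + \int f\,\con{v} - \int F\cdot\con{\nabla v}\Bigr]\d\sigma,$$
where the $\sigma$-integrand lies in $\L^1(\R)$ by Hölder's inequality and the duality $(2_*)'=2^*$. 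Hence $t\mapsto \|v(t,\blank)\|_2^2$ is absolutely continuous on $\R$, and since it also belongs to $\L^1(\R)$ (from $v\in \L^2(\ree)$) its limits at $\pm\infty$ exist and must vanish, giving $v\in \C_0(\R;\L^2(\R^n))$. Passing $s\to-\infty$ and applying the Gårding inequality \eqref{eq:accrassumption} yields
$$\tfrac12\|v(t,\blank)\|_2^2 + \lambda\|\nabla v\|_2^2 \leq \kappa\|v\|_2^2 + \|f\|_{2_*}\|v\|_{2^*} + \|F\|_2\|\nabla v\|_2.$$
Combined with the parabolic Gagliardo--Nirenberg embedding
$$\|v\|_{2^*} \lesssim \bigl(\sup_t\|v(t,\blank)\|_2\bigr)^{2/(n+2)} \|\nabla v\|_2^{n/(n+2)}$$
(valid since $2^*=2+4/n$) and Young's inequality, one absorbs the $v$-terms on the right to obtain both the sup bound and part (ii).

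For (i), $\HT v\in V$ is an admissible test function because $\HT$ is bounded on $\L^2(\R)$ and commutes with $\nabla_x$. Using the factorization $\partial_t=\dhalf\HT\dhalf$ and Plancherel in time one computes
$$\dual{\partial_t v}{\HT v}_{\L^2(\ree)} = \|\dhalf v\|_2^2,$$
so the weak formulation \eqref{eq:system} (extended to test against $\HT v$) rearranges as
$$\|\dhalf v\|_2^2 = \Re\Bigl[\iint f\,\con{\HT v} - \iint A\nabla v\cdot\con{\HT\nabla v} - \iint F\cdot\con{\HT\nabla v}\Bigr].$$
Bounding each term by Hölder's inequality, using the $\L^2$- and $\L^{2^*}$-boundedness of $\HT$, and inserting the bound on $\|v\|_{2^*}$ from the previous step gives $\|\dhalf v\|_2 \lesssim \|v\|_V + \|f\|_{2_*} + \|F\|_2$, completing the displayed estimate.

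The hard part will be the rigorous justification of the pointwise-in-time energy identity and, above all, of using $\HT v$ as a test function in \eqref{eq:system}, since the source $f$ only lies in $\L^{2_*}(\ree)$ and the non-local operator $\HT$ destroys compact support in time. A standard remedy is Steklov averaging: mollify $v$ in time, carry out the identities for the smooth approximants, and pass to the limit using the norm bounds just derived, which are stable under the approximation.
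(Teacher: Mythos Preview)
Your approach is essentially sound but takes a genuinely different route from the paper's. The paper does \emph{not} test the equation directly against $v$ or $\HT v$. Instead it proceeds as follows: using the hidden coercivity of the sesquilinear form $a_\delta(u,v)=\langle\cL u,(1+\delta\HT)v\rangle$ on the energy space $E=V\cap\H^{1/2}(\R;\L^2)$, the operator $\cL=\partial_t-\div A\nabla+\kappa+1:E\to E^*$ is shown to be invertible by Lax--Milgram. One then sets $\widetilde v:=\cL^{-1}(f+(\kappa+1)v+\div F)\in E$ and observes that $u:=v-\widetilde v\in V$ solves a homogeneous equation; a global Caccioppoli inequality forces $u=0$, hence $v=\widetilde v\in E$. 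This gives (i) first; (ii) then follows from the parabolic Sobolev embedding $\|\phi\|_{2^*}\lesssim\|\nabla\phi\|_2+\|\dhalf\phi\|_2$ (which uses the half-derivative, unlike your Gagliardo--Nirenberg interpolation $\L^\infty_t\L^2_x\cap\L^2_t\dot\H^1_x\hookrightarrow\L^{2^*}$), and (iii) comes last via a Lions-type lemma applied to $v\in V_{2^*}$ with $\partial_t v\in V_{2^*}^*$.

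Your direct strategy avoids the Lax--Milgram/uniqueness detour, which is elegant, but it carries a circularity you should flag more explicitly: the energy identity you write for (iii) needs $\int f\,\overline v$ to make sense, hence $v\in\L^{2^*}$, which is precisely (ii). The resolution you sketch at the end is correct---mollify in time so that $M_\eps:=\sup_t\|v_\eps(t)\|_2<\infty$ a priori (with an $\eps$-dependent bound), insert Gagliardo--Nirenberg $\|v_\eps\|_{2^*}\lesssim M_\eps^{2/(n+2)}\|\nabla v\|_2^{n/(n+2)}$ into the energy inequality, absorb the resulting $M_\eps^{2/(n+2)}$ term via Young, and pass to the limit---but note that after mollification the principal term becomes $(A\nabla v)_\eps$ rather than $A\nabla v_\eps$, so G\aa{}rding is not available; fortunately only the crude bound $|\!\int(A\nabla v)_\eps\cdot\overline{\nabla v_\eps}|\lesssim\|\nabla v\|_2^2$ is needed since $\|\nabla v\|_2$ is already given. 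Similarly, your Plancherel computation $\langle\partial_t v,\HT v\rangle=\|\dhalf v\|_2^2$ is only formal (since $\partial_t v\notin\L^2$) and must be run on the mollified $v_\eps$, for which $\partial_t v_\eps\in\L^2$ and hence Plancherel is rigorous; the bound then uses the already-established $\|v\|_{2^*}$. The paper's construct-and-identify approach sidesteps all of this bootstrapping cleanly and, importantly, furnishes the invertibility of $\cL$ that is reused verbatim in the operator-theoretic proof of higher integrability (Section~\ref{sec:proof2}).
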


We need a few short lemmas to prepare its proof. The key tool is the following definition of the parabolic operator through a sesquilinear form and its accretivity on the \emph{parabolic energy space} 
\begin{align*}
 E:= V\cap \H^{1/2}(\R; \L^2(\R^n))
\end{align*}
with norm $\|u\|_{E} := (\|u\|_{2}^2 + \|\nabla u \|_{2}^2+ \|\dhalf u\|_{2}^2)^{1/2}$. The result is basically that of~\cite{Kaplan, HL} but we repeat the short argument for the reader's convenience.

\begin{lem}
\label{lem:2}  
The operator $\cL:= \partial_t -\div A(t,x)\nabla + \kappa + 1$ can be defined as a bounded operator from $E$ to its dual $E^*$ via
\begin{align*}
 \langle \cL u, v \rangle := \iint_\ree A \nabla u \cdot \cl{\nabla v} + \HT \dhalf u \cdot \cl{\dhalf v} + (\kappa + 1) u \cdot \cl{v} \; \d x \, \d t, \qquad u,v \in E.
\end{align*}
This operator is invertible and its norm as well as the norm of the inverse depend only on ellipticity and dimensions.
\end{lem}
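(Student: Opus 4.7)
The plan is to prove the lemma in three steps: boundedness of $\cL$, a ``hidden coercivity'' estimate, and bijectivity via an inf-sup argument.

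For boundedness of $\cL \colon E \to E^*$, I would bound each of the three summands in $\langle \cL u, v\rangle$ separately by a constant multiple of $\|u\|_E\|v\|_E$: the diffusion term via $\|A\|_\infty$ and Cauchy--Schwarz; the fractional time term via Cauchy--Schwarz after using that $\HT$ is an $\L^2(\ree)$-isometry (its Fourier symbol has modulus one); and the $(\kappa+1)u\cdot\cl v$ term directly by Cauchy--Schwarz.

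The heart of the proof is the hidden coercivity. The naive choice $v=u$ fails, because on the Fourier side
$\iint \HT\dhalf u \cdot \cl{\dhalf u}\,\d x\,\d t = \i\iint \sgn(\tau)|\tau||\hat u|^2\,\d\tau\,\d x$
is purely imaginary and contributes nothing to the real part. The fix is to test against the modified function $w_u := u + \delta \HT u$ for a small $\delta>0$. Using that $\HT$ commutes with $\dhalf$, is an $\L^2$-isometry, and satisfies $\HT\cl{f}=\cl{\HT f}$, a short computation yields the crucial identity
\[
\Re \iint \HT\dhalf u \cdot \cl{\dhalf w_u}\,\d x\,\d t = \delta\,\|\dhalf u\|_2^2.
\]
Combining this with the G{\aa}rding inequality applied to the spatial term (which gives $\lambda\|\nabla u\|_2^2 - \kappa\|u\|_2^2$ after integration in $t$) and noting that the added $(\kappa+1)\|u\|_2^2$ precisely absorbs the bad $-\kappa\|u\|_2^2$, one obtains, after controlling the two cross-terms by Cauchy--Schwarz,
\[
\Re\,a(u, w_u) \geq (\lambda - \delta\|A\|_\infty)\|\nabla u\|_2^2 + (1-\delta(\kappa+1))\|u\|_2^2 + \delta\|\dhalf u\|_2^2.
\]
Choosing $\delta$ small enough, depending only on $\lambda$, $\|A\|_\infty$ and $\kappa$, the right-hand side dominates $c\|u\|_E^2$, while $\|w_u\|_E \leq 2\|u\|_E$. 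This produces the inf-sup lower bound $\|\cL u\|_{E^*} \gtrsim \|u\|_E$.

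For bijectivity I would invoke the Banach--Ne\v{c}as--Babu\v{s}ka theorem: beyond the inf-sup bound just obtained, it suffices to check that for every nonzero $v\in E$ there exists $u\in E$ with $a(u,v)\neq 0$. The symmetric choice $u_v := v - \delta \HT v$ does the job: the middle contribution is now $a(\HT v, v)$, whose fractional-time part evaluates to $-\|\dhalf v\|_2^2$ because $\HT^2=-I$, so the sign is flipped favorably by the $-\delta$ prefactor, and one obtains $\Re\,a(u_v, v) \gtrsim \|v\|_E^2$ by the same choice of small $\delta$. Hence $\cL$ is a bounded bijection $E \to E^*$ whose norm and inverse norm depend only on ellipticity and dimensions.

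The main obstacle is the second step: discovering that the natural ``test function'' against which the form is coercive is $u + \delta\HT u$ rather than $u$, and verifying the Fourier-level identity that converts the imaginary middle term into the positive quantity $\delta\|\dhalf u\|_2^2$. Once that is seen, everything else is elementary bookkeeping with the Gauge inequality and Cauchy--Schwarz.
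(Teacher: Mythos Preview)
Your proposal is correct and takes essentially the same approach as the paper: both hinge on testing the sesquilinear form against $(1+\delta\HT)u$ to uncover the hidden coercivity $\Re\,a(u,(1+\delta\HT)u)\gtrsim\|u\|_E^2$. The only packaging difference is that the paper notes $(1+\delta^2)^{-1/2}(1+\delta\HT)$ is an isometry on $E$ and applies Lax--Milgram directly to the twisted form $a_\delta(u,v):=a(u,(1+\delta\HT)v)$, whereas you verify the two Banach--Ne\v{c}as--Babu\v{s}ka conditions separately; also, the zero-order cross term $\delta(\kappa+1)\iint u\cdot\cl{\HT u}$ is in fact purely imaginary (since $\HT$ is skew-adjoint), so your coefficient $1-\delta(\kappa+1)$ can be sharpened to $1$ as in the paper.
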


\begin{proof} 
The $E \to E^*$ boundedness of $\cL$ is clear by definition. Next, for the invertibility, the form 
\begin{align*}
 a_\delta(u,v) := \iint_{\ree} A \nabla u \cdot \cl{\nabla (1+\delta \HT) v} + \HT \dhalf u \cdot \cl{\dhalf (1+\delta \HT) v} + (\kappa + 1)u \cdot \cl{(1+\delta \HT) v} \; \d x \, \d t,
\end{align*}
for $u,v\in E$, is bounded and satisfies an accretivity bound for $\delta>0$ sufficiently small. Indeed, from the ellipticity condition and the fact that the Hilbert transform is $\L^2$-isometric,   skew-adjoint and commutes with $\dhalf$ and $\nabla$,  
\begin{equation*}\label{eq:coer}
\Re a_\delta(u,u) \ge (\lambda-\|A\|_{\infty}\delta )\|\nabla u\|_2^2 + \delta \|\dhalf u \|_2^2  + \|u\|_{2}^2. \end{equation*}
 As  
$$
\dual {\cL u}{ (1+\delta\HT)v} = a_\delta(u,v), \qquad u,v\in E, 
$$
and since $(1+\delta^2)^{-1/2}(1+\delta\HT)$ is isometric on $E$ as is seen using its symbol $(1+\delta^2)^{-1/2}(1+ \i \delta \sgn\tau )$, it follows from the Lax-Milgram lemma that $\cL$ is invertible from $E$ onto $E^*$.
\end{proof}

The following lemma is well-known on the Hilbert space $V$, see \cite[Prop.~III.1.2]{Showalter}, but we need a slight variant involving the smaller spaces 
\begin{align*}
 V_p:= V\cap \L^{p}(\ree), \quad 1<p<\infty,
\end{align*}
which are complete for the norm $\|v\|_{V_{p}}=\max(\|v\|_{V}, \|v\|_{p})$. Of course, we have $V_2 = V$. Since $\C_0^\infty(\ree)$ is dense in both $V$ and $\L^{p}(\ree)$ through approximation via smooth convolution and truncation, we can identify their larger duals $V_p^*$ (compared to $V^*)$ to the sum $V^* + \L^{p'}(\ree) = \L^2(\R; \W^{-1,2}(\R^n)) + \L^{p'}(\ree)$ and the $V_p^*$-$V_p$ duality can be realized as a Lebesgue integral
\begin{align*}
 \langle v^*, v \rangle = \int_{\R} \langle v_1^*(t,\blank), v(t,\blank) \rangle_{\W^{-1,2}, \W^{1,2}} + \langle v_2^*(t,\blank), v(t,\blank) \rangle_{\L^{p'}, \L^p} \; \d t,
\end{align*}
where $v^* = v_1^* + v_2^*$ is any admissible decomposition, see Theorem~2.7.1 in \cite{BL}.

\begin{lem}
\label{lem:3}  
Let $1<p<\infty$ and consider a function $v\in V_{p}$ such that $\pd_{t}v\in  V_{p}^*$. Then $v \in \C(\R; \L^2(\R^n))$ and $t\mapsto \|v(t,\blank)\|_{2}^2$ is absolutely continuous on $\R$, vanishes at $\pm\infty$ and satisfies
\begin{align*}
 \sup_{t\in \R}\|v(t, \blank)\|_{2}^2\le 2  \|v\|_{V_{p}}\|\pd_{t}v\|_{V_{p}^*}.
\end{align*}
Moreover, it holds $\Re \langle \pd_{t}v, v \rangle = 0$ for the $V_p^*$-$V_p$ duality. 
\end{lem}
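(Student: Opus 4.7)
My approach is to adapt the classical result for $V$ given in \cite[Prop.~III.1.2]{Showalter} by combining mollification in time with a smooth time cut-off, carefully accounting for the fact that here the dual space is $V_p^* = V^* + \L^{p'}(\ree)$ rather than $V^*$.

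First, I would regularise $v$ by time mollification: pick $\rho_\eps \in \C^\infty_{0}(\R)$ a standard approximate identity and set $v_\eps := \rho_\eps *_t v$. Standard properties of mollification give $v_\eps \to v$ in $V_p$, while $\partial_t v_\eps = \rho_\eps *_t \partial_t v$ converges to $\partial_t v$ in $V_p^*$ upon applying convolution to each piece of an admissible decomposition. Crucially, $v_\eps$ is smooth in $t$ with values in $\W^{1,2}(\R^n)$, so the real map $\phi_\eps(t) := \|v_\eps(t,\blank)\|_{2}^2$ is classically $\C^1$ with derivative $\phi_\eps'(t) = 2\Re (\partial_t v_\eps(t,\blank), v_\eps(t,\blank))_{\L^2}$. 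Using the Lebesgue realisation of the $V_p^*$--$V_p$ duality from Theorem~2.7.1 of \cite{BL} on each summand of an admissible decomposition $\partial_t v_\eps = g_1 + g_2$ with $g_1 \in V^*$, $g_2 \in \L^{p'}(\ree)$, and then taking the infimum, gives
\[
 \|\phi_\eps'\|_{\L^1(\R)} \leq 2\|v_\eps\|_{V_p}\|\partial_t v_\eps\|_{V_p^*}.
\]
Together with $\phi_\eps \in \L^1(\R)$ (which follows from $v_\eps \in \L^2(\ree)$), this makes $\phi_\eps$ absolutely continuous with limits $0$ at $\pm\infty$, and yields $\sup_{t \in \R} \phi_\eps(t) \leq 2\|v_\eps\|_{V_p}\|\partial_t v_\eps\|_{V_p^*}$.

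For the vanishing duality, I would test with a smooth cut-off $\chi_R \in \C^\infty_{0}(\R)$ satisfying $\chi_R = 1$ on $[-R, R]$, $\supp \chi_R \subseteq [-R-1, R+1]$ and $|\chi_R'| \leq 2$. Compact support in $t$ of $\chi_R^2 |v_\eps|^2$ yields
\[
 \Re \iint_\ree \chi_R^2\, \partial_t v_\eps \cdot \cl{v_\eps} \, \d x\, \d t = -\iint_\ree \chi_R \chi_R'\, |v_\eps|^2 \, \d x\, \d t,
\]
whose right-hand side is $o(1)$ as $R \to \infty$ by dominated convergence (since $v_\eps \in \L^2(\ree)$). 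As $\chi_R^2 v_\eps \to v_\eps$ in $V_p$, the left-hand side converges to $\Re \dual{\partial_t v_\eps}{v_\eps}$, so $\Re\dual{\partial_t v_\eps}{v_\eps} = 0$; letting $\eps \to 0$ gives $\Re \dual{\partial_t v}{v} = 0$. Applying the estimate from the previous paragraph to $v_\eps - v_{\eps'}$ in place of $v_\eps$ shows that $(v_\eps)_\eps$ is Cauchy in $\C(\R; \L^2(\R^n))$ equipped with the uniform norm; its limit provides the continuous representative of $v$, and the sup bound, the absolute continuity of $t \mapsto \|v(t, \blank)\|_2^2$, and its vanishing at $\pm\infty$ all pass to the limit.

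The main subtlety I anticipate is that $\partial_t v$ is not a single measurable function on $\ree$ but only an element of the sum space $V^* + \L^{p'}(\ree)$. Every step in which a pointwise $\L^2$ pairing is to be identified with an abstract duality bracket must therefore be carried out on an admissible decomposition $\partial_t v = g_1 + g_2$, with the final bounds depending only on $\|\partial_t v\|_{V_p^*}$ through an infimum over such decompositions. This is precisely where the identification $V_p^* = V^* + \L^{p'}(\ree)$ mentioned in the paragraph preceding the lemma becomes indispensable, and what distinguishes the present statement from its standard $V$-analogue.
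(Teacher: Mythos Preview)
Your proposal is correct and follows essentially the same route as the paper: mollify in time, differentiate $\|v_\eps(t,\cdot)\|_2^2$ classically, identify the derivative with the $V_p^*$--$V_p$ duality via an admissible decomposition, and pass to the limit after a Cauchy-in-$\C(\R;\L^2)$ argument on differences $v_\eps - v_{\eps'}$. The only step to tighten is ``absolute continuity \ldots\ passes to the limit'': uniform convergence alone does not preserve absolute continuity, so you should make explicit that your strong convergences $g_i^\eps \to g_i$ and $v_\eps \to v$ force $\phi_\eps' \to \psi$ in $\L^1(\R)$, whence $\phi(b)-\phi(a)=\int_a^b \psi$; this is implicit in your setup but worth stating (the paper obtains it the same way, by passing to the limit in the integral identity over bounded intervals).
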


\begin{proof}
We approximate $v$ through convolution with smooth compactly supported kernels in the $t$-variable only, say $v_\eps = v \ast_t \varphi_\eps$, where $\eps >0$. By construction $v_\eps$ is in the class $\C^\infty(\R; \L^2(\R^n) \cap \L^p(\R^n))$ and vanishes at $\pm \infty$. Hence we can differentiate in the classical sense
\begin{align*}
\frac{\d}{\d t} \|v_\eps(t,\blank)\|_2^2 = 2 \Re \int_{\R^n} \partial_t v_\eps(t,x) \cl{v_\eps(t,x)} \; \d x.
\end{align*}
Integrating this identity over an interval $I$, we obtain
\begin{align}
\label{eq:dual1}
 \int_I \frac{\d}{\d t} \|v_\eps(t, \blank)\|_2^2 \; \d t
 = 2 \Re \iint_{\R^{n+1}} \partial_t v_\eps(t,x) \cl{1_{I}(t) v_\eps(t,x)} \; \d x \, \d t = 2 \Re \langle \partial_t v_\eps, 1_{I}v_\eps \rangle.
\end{align}
In the limit $\eps \to 0$ we have, by construction, $v_\eps \to v$ strongly in $V_p$. As for $\partial_t v_\eps$, we have boundedness in $V_p^*$ uniformly in $\eps$ and weak${}^*$-convergence towards $\partial_t v$. Hence, the right-hand side in \eqref{eq:dual1} converges as $\eps \to 0$ and we need to determine the limit of the left-hand side.

Let us take $I=(-\infty, T]$. First, we write the same equality \eqref{eq:dual1} for differences $v_\eps - v_{\eps'}$ and apply the fundamental theorem of calculus on the left-hand side. This reveals that $(v_\eps(T,\blank))_\eps$ is a Cauchy sequence in $\L^2(\R^n)$, uniformly in $T \in \R$. The approximants $v_\eps: \R \to \L^2(\R^n)$ are continuous. Thus, the limit $v:\R \to \L^2(\R^n)$ is also continuous. Now we pass to the limit as $\eps \to 0$ in \eqref{eq:dual1}. The left-hand side tends to $\|v(T, \blank)\|_{2}^2$ whereas the right-hand side tends to $2 \Re \langle \partial_t v, 1_{I} v \rangle \leq 2 \|\pd_{t}v\|_{V_{p}^*} \|v\|_{V_{p}}$.

Next, we take an arbitrary bounded interval  $I = (a,b)$, pass again to the limit as $\eps \to 0$ in \eqref{eq:dual1} and write out the duality explicitly. This yields
\begin{align*}
 \|v(b,\blank)\|_2^2 - \|v(a,\blank)\|_2^2 = \int_{a}^b \langle f(t,\blank), v(t,\blank) \rangle_{\W^{-1,2}, \W^{1,2}} + \langle g(t,\blank), v(t,\blank) \rangle_{\L^{p'}, \L^p} \; \d t,
\end{align*}
where $\partial_t v = f + g$ with $f \in \L^2(\R; \W^{-1,2}(\R^n))$ and $g \in \L^{p'}(\ree)$. The integrand on the right is in $\L^1(\R)$ by H\"older's inequality. Hence, $t\mapsto \|v(t,\blank)\|_{2}^2$ is absolutely continuous on $\R$.

We may obtain the final statement $\Re \langle \pd_{t}v, v \rangle = 0$ by taking $I = \mathbb{R}$ in \eqref{eq:dual1} and passing to the limit as $\eps \to 0$.   
\end{proof}

Finally, we need a slight variant of the usual parabolic Sobolev embeddings. For background, we refer to~\cite{SobPara}.

\begin{lem}\label{lem:1} Let $1 < p < n+2$. Then for all $\phi \in \C^\infty_{0}(\ree)$,
$$
\|\phi\|_{p^*}\lesssim \|\nabla \phi\|_{p}+\|\dhalf \phi\|_{p}.
$$ 
\end{lem}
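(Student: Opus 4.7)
The plan is to view the claim as a parabolic Sobolev embedding: under the scaling $(x,t)\mapsto(\delta x,\delta^2 t)$, both $\partial_{x_j}$ and $\dhalf$ are first-order operators, and the homogeneous dimension of $\ree$ is $n+2$, which matches the relation $1/p^* = 1/p - 1/(n+2)$. I will reduce the proof to a parabolic Hardy--Littlewood--Sobolev estimate via a first-order parabolic derivative introduced at the level of Fourier multipliers.

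Set $\rho(x,t):=(|x|^2+|t|)^{1/2}$, so that $(\ree,\rho,\d x\,\d t)$ is a space of homogeneous type of dimension $n+2$, and introduce the Fourier multiplier $\Lambda$ on $\ree$ with symbol $\lambda(\xi,\tau):=(|\xi|^2+|\tau|)^{1/2}$. Then $\Lambda^{-1}$ is convolution with a kernel parabolic-homogeneous of degree $1-(n+2)$ and pointwise comparable to $\rho^{-(n+1)}$; equivalently, $\Lambda^{-1}$ is a Riesz potential of order one on $(\ree,\rho,\d x\,\d t)$. The corresponding HLS inequality gives
\[
\|\phi\|_{p^*} = \|\Lambda^{-1}(\Lambda\phi)\|_{p^*} \lesssim \|\Lambda\phi\|_p
\]
whenever $1<p<n+2$, which is precisely what pins the upper bound on $p$ stated in the lemma.

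It remains to establish the Fourier multiplier inequality
\[
\|\Lambda\phi\|_p \lesssim \|\nabla\phi\|_p + \|\dhalf\phi\|_p.
\]
Writing $\lambda^2 = \sum_j\xi_j^2+|\tau|$ yields
\[
\lambda\,\widehat\phi = \sum_{j=1}^n \frac{-\i\xi_j}{\lambda}\,\widehat{\partial_j\phi} + \frac{|\tau|^{1/2}}{\lambda}\,\widehat{\dhalf\phi},
\]
so it suffices to check $\L^p$-boundedness on $\ree$ of the Fourier multipliers with symbols $-\i\xi_j/\lambda$ and $|\tau|^{1/2}/\lambda$. The former are bounded, smooth on $\ree\setminus\{0\}$, and parabolic-homogeneous of degree zero, so a Mihlin--H\"ormander theorem adapted to parabolic scaling applies directly.

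The main obstacle is the multiplier $|\tau|^{1/2}/\lambda$: although bounded and parabolic-homogeneous of degree zero, its symbol is not smooth across $\tau=0$, so the classical Mihlin condition fails. I would handle this either via an operator-valued multiplier theorem (viewing it as a $\tau$-multiplier with values in $\L^p(\R^n)$-bounded operators in $x$) or, more directly, by appealing to the parabolic Bessel/Riesz-potential $\L^p$-theory developed in \cite{SobPara}. Combining the multiplier inequality with the HLS step then yields the lemma.
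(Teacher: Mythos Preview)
Your strategy coincides with the paper's: a parabolic Sobolev/HLS step followed by multiplier estimates. The paper quotes the Sobolev inequality $\|\phi\|_{p^*}\lesssim\|\cF^{-1}((\i\tau+|\xi|^2)^{1/2}\cF\phi)\|_p$ from \cite{SobPara} in place of your HLS for $\Lambda^{-1}$, and then must bound the multipliers $(\i\tau+|\xi|^2)^{1/2}/(|\tau|^{1/2}+|\xi|)$ and $\xi/|\xi|$; structurally this is the same reduction you carry out.

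There is, however, one error and one gap. The error: your multipliers $-\i\xi_j/\lambda$ are \emph{not} smooth on $\ree\setminus\{0\}$, because $\lambda=(|\xi|^2+|\tau|)^{1/2}$ inherits the kink of $|\tau|$; indeed $\partial_\tau\lambda=\sgn(\tau)/(2\lambda)$ jumps across $\tau=0$. So the very obstruction you flag for $|\tau|^{1/2}/\lambda$ also afflicts $\xi_j/\lambda$, and a parabolic Mihlin--H\"ormander criterion requiring smoothness off the origin does not directly apply to either symbol. The gap is that you then leave the resolution of the multiplier boundedness open.

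The paper's fix --- which works equally well for your symbols --- is to invoke the \emph{Marcinkiewicz} multiplier theorem in its product form (Corollary~5.2.5 in \cite{Grafakos}): one only needs $|\xi^\alpha\tau^\beta\,\partial_\xi^\alpha\partial_\tau^\beta m|\lesssim 1$ for $\alpha\in\{0,1\}^n$ and $\beta\in\{0,1\}$, so singularities confined to the coordinate hyperplanes $\{\tau=0\}$ and $\{\xi_j=0\}$ are harmless. A direct check shows that $\xi_j/\lambda$ and $|\tau|^{1/2}/\lambda$ (as well as the paper's two multipliers) satisfy these bounds. With Marcinkiewicz in place of Mihlin--H\"ormander your argument is complete, and no operator-valued detour is needed.
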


\begin{proof} 
Let $\cF$ be the Fourier transform on $\ree$ and let $(\tau,\xi)$ be the Fourier variable corresponding to $(t,x)$. The Sobolev inequality in parabolic scaling from \cite{SobPara} gives $\|\phi\|_{p^*} \lesssim \|\cF^{-1}((\i \tau + |\xi|^2)^{1/2} \cF \phi)\|_p$. So, in order to conclude, it suffices to remark that the operators defined on the Fourier side by multiplication with $(\i \tau + |\xi|^2)^{1/2}/(|\tau|^{1/2} + |\xi|)$ {and $\xi/|\xi|$} are bounded on $\L^p(\ree)$ by the Marcinkiewicz multiplier theorem, see Corollary~5.2.5 in \cite{Grafakos}.
\end{proof}

We can now give the

\begin{proof}[Proof of Proposition~\ref{prop:1}] 
In virtue of the canonical identifications we have the continuous inclusion $E \subset E^*$ and a bounded mapping $\div: \L^2(\ree) \to E^*$. It follows from Lemma~\ref{lem:1} and density of $\C^\infty_{0}(\ree)$ in $E$ (standard mollification and truncation), that $E$ embeds into $\L^{2^*}(\ree)$. Hence, $\L^{2_*}(\ree)$ embeds into $E^*$.  Thus, we can state $f+(\kappa + 1)v+ \div F\in E^*$. 

It follows from Lemma~\ref{lem:2} that there exists $\widetilde v\in E$ such that $\cL \widetilde v =f+(\kappa + 1)v+\div F$. By definition of the respective embeddings, this means that for all $\phi \in E$,
\begin{align*}
 \iint_\ree A \nabla \widetilde v \cdot \cl{\nabla \phi} + \HT \dhalf \widetilde v \cdot \cl{\dhalf \phi} + (\kappa + 1)\widetilde v \cdot \cl{\phi} \; \d x \, \d t = \iint_\ree (f+(\kappa + 1)v) \cl{\phi} - F \cdot \cl{\nabla \phi} \; \d x \, \d t.
\end{align*}
Restricting to $\phi \in \C_0^\infty(\ree)$, we can write $\partial_t \phi = \dhalf \HT \dhalf \phi$ and see in particular that $u:=v-\widetilde v\in V$ is a weak solution to $\partial_t u -\div A(t,x)\nabla u +(\kappa + 1)u=0$ in $\ree$. We may now apply the Caccioppoli inequality,
\begin{align}\label{eq:cac}
 \iint_{I \times Q} |\nabla u|^2\; \d x \, \d t  \lesssim \frac{1}{r(Q)^2} \iint_{4 I \times 2Q} |u|^2\; \d x \, \d t,
\end{align}
for any parabolic cylinder $I \times Q$ with $\ell(I) = r(Q)^2$, see Remark \ref{rem:Cac} below for convenience. Since we have $u\in \L^2(\ree)$, we obtain $\nabla u = 0$ on passing to the limit $r(Q) \to \infty$. Hence, $u$ depends only on $t$. Again, as $u\in \L^2(\ree)$, $u$ must be $0$.
It follows that $v=\widetilde v\in E$, hence (i) is proved and 
\begin{align*}
 \|\dhalf v\|_{2}\lesssim \|\cL v\|_{E^*} \leq (\kappa + 1)\|v\|_2+ \|f\|_{2_*}+ \|F\|_{2}
\end{align*}
follows by Lemma~\ref{lem:2}. Applying Lemma~\ref{lem:1} again yields (ii) by density. As for (iii) we have seen $v\in V_{2^*}$, which in turn implies $\pd_{t}v\in V_{2^*}^*$ by the equation for $v$. Hence, we can apply Lemma~\ref{lem:3} to $v$ and obtain the statements on continuity. We also obtain $\sup_{t\in \R}\|v(t, \blank)\|_{2}^2\le 2  \|v\|_{V_{2}}\|\pd_{t}v\|_{V_{2}^*}$ and so the required bound follows on controlling the right-hand side by means of (i) and (ii).
\end{proof}
 
%%%%%%%%%%%%%%%%%%%%%%%%%%%%%%%%%%%%%%%%%%%%%%%%%%%%%%%%%%%%%%%%%%%%%%%%%%%%%%%%%%%%%%%%%%%%%%%%%%%%%%%%%%%%%%%%%%%%%%%%%%%%%%%%%%%%%%%%%%%%%%%%%%%%%%%%%%%%%%%%%%%%%%%% 
\section{Local estimates}
\label{sec:localEstimates}

As a first application of the global results obtained in the previous section we present the ``classical'' local estimates for weak solution within our setting. We recall that by definition a weak solution $u$ to a parabolic problem in a parabolic cylinder $\Omega = I_0 \times Q_0$ satisfies $u \in \Lloc^2(\Omega)$ and $\nabla u \in \Lloc^2(\Omega)$.

The following lemma is nothing but a simple calculation. Nevertheless, it is of fundamental importance for all subsequent considerations. Here, we suggestively use the notation for the scalar case $m=1$ (even when $m>1$), as we are only interested in norm estimates later on.

\begin{lem}
\label{lem:localization}
Let $u$ be a weak solution in $\Omega$ to $\partial_t u -\div A(t,x)\nabla u  = f+\div F$ with $f \in \Lloc^1(\Omega)$ and $F \in \Lloc^{1}(\Omega)$. Let $\chi \in \C_0^\infty(\Omega; \IC)$ and put $v := \chi u$. Then $v \in V$ is a weak solution to 
\begin{align*}
 \partial_t v -\div A(t,x)\nabla v  = \widetilde f+\div \widetilde F 
\end{align*}
in $\ree$ with
\begin{align}
\label{eq:tilde}
\begin{split}
   \widetilde f &  = \chi f + (\pd_{t}\chi) u  - A\nabla u\cdot \nabla \chi - F\cdot \nabla \chi ,    \\
   \widetilde F &  = - A (u\nabla \chi) + F\chi.
\end{split}
\end{align}
\end{lem}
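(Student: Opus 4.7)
The plan is a direct product-rule computation. First I would verify that $v:=\chi u$, extended by zero outside $\supp\chi\subset\Omega$, belongs to $V$. Since $\chi$ and $\nabla\chi$ are bounded and compactly supported in $\Omega$, and $u,\nabla u\in\Lloc^2(\Omega)$, both $v$ and $\nabla v = \chi\nabla u+u\nabla\chi$ are globally square-integrable on $\ree$, so $v\in V$.

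The heart of the argument is to test the weak formulation \eqref{eq:system} for $u$ against $\cl{\chi}\psi$ with an arbitrary $\psi\in\C^\infty_0(\ree;\IC^m)$. Because $\chi$ is smooth and compactly supported in $\Omega$, the function $\cl{\chi}\psi$ lies in $\C^\infty_0(\Omega;\IC^m)$ and is admissible as a test function for $u$. The product rule gives
\begin{align*}
 \cl{\nabla(\cl{\chi}\psi)}=\chi\cl{\nabla\psi}+(\nabla\chi)\cl{\psi}, \qquad \cl{\pd_t(\cl{\chi}\psi)}=\chi\cl{\pd_t\psi}+(\pd_t\chi)\cl{\psi},
\end{align*}
so each of the four integrals in \eqref{eq:system} splits into a ``principal'' piece in which $\chi$ multiplies a derivative of $\psi$, plus a ``commutator'' piece in which a derivative of $\chi$ multiplies $\psi$.

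To finish, I would substitute $\chi\nabla u=\nabla v-u\nabla\chi$ and $\chi u=v$ in the principal piece, so that its contribution matches the left-hand side of the weak formulation for $v$ tested against $\psi$ on $\ree$. Collecting the commutator terms together with the extra contribution $-A(u\nabla\chi)\cdot\cl{\nabla\psi}$ produced by the substitution, and extending the integration from $\Omega$ to $\ree$ (harmless, since $v$ and all the resulting data are supported in $\supp\chi$), one reads off the right-hand side as $\iint_\ree\widetilde f\cdot\cl{\psi}-\widetilde F\cdot\cl{\nabla\psi}$ with $\widetilde f$ and $\widetilde F$ exactly as in \eqref{eq:tilde}. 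Local square integrability of $u$ and $\nabla u$ on $\supp\chi$ together with smoothness and compact support of $\chi$ make $\widetilde f$ and $\widetilde F$ globally integrable, so the identity holds for every $\psi\in\C^\infty_0(\ree;\IC^m)$ and $v$ is indeed a weak solution on $\ree$.

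There is no real obstacle here: the lemma is bookkeeping. The one point requiring attention is the treatment of complex conjugates forced by the anti-linear duality convention, which is precisely why I would choose $\cl{\chi}\psi$ rather than $\chi\psi$ as the test function, so that upon conjugation inside \eqref{eq:system} a factor $\chi$ (and not $\cl{\chi}$) appears multiplying derivatives of $\psi$.
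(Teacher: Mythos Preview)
Your proposal is correct and is precisely the ``simple calculation'' the paper alludes to without writing out: test the weak formulation for $u$ against $\cl{\chi}\psi$, apply the product rule, substitute $\chi\nabla u=\nabla v-u\nabla\chi$, and collect terms. Your remark about choosing $\cl{\chi}\psi$ rather than $\chi\psi$ to handle the conjugate-linear pairing is exactly the right bookkeeping detail.
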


With this at hand we can prove the local higher integrability and absolute continuity in time of weak solutions.

\begin{thm}
\label{thm:reg-i}
Assume that $u$ is a weak solution to $\partial_t u -\div A(t,x)\nabla u  = f+\div F$ on $\Omega= I_{0} \times Q_{0}$ with right-hand side $f\in \Lloc^{2_*}(\Omega; \IC^m)$ and $ F\in \Lloc^2(\Omega; \IC^{mn})$. It holds 
\begin{align*}
 u\in \Lloc^{2^*}(\Omega; \IC^m) \cap \C(I_{0}; \Lloc^2(Q_{0}; \IC^m)).
\end{align*}
More precisely, for every $\chi \in \C_0^\infty(\Omega)$ the function $t \mapsto \|(u\chi)(t,\blank)\|^2$ is absolutely continuous on $\R$, it holds $\dhalf (u\chi)\in \L^2(\ree; \IC^m)$ and
\begin{equation}
\label{eq:qestimate}
\|u\chi\|_{2^*}\lesssim \| \widetilde f\|_{2_*}+ \|\widetilde F\|_{2}+ \|u\chi\|_{2}+ \|\nabla(u\chi)\|_{2}. 
\end{equation}
\end{thm}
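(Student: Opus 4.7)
The plan is to reduce everything to the global statement Proposition~\ref{prop:1} via the cut-off device of Lemma~\ref{lem:localization}. Given $\chi \in \C_0^\infty(\Omega)$, set $v := u\chi$; by Lemma~\ref{lem:localization}, $v$ is a global weak solution on $\ree$ to the inhomogeneous system with right-hand sides $\widetilde f$ and $\widetilde F$ given by \eqref{eq:tilde}. Once the hypotheses of Proposition~\ref{prop:1} are verified for $v$, conclusions (i)--(iii) of that proposition translate immediately into the three assertions on $v = u\chi$: square-integrability of $\dhalf v$, membership of $v$ in $\L^{2^*}(\ree)$, and absolute continuity of $t \mapsto \|v(t,\blank)\|_2^2$.

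The first task is therefore to check $v \in V$, $\widetilde F \in \L^2(\ree)$, and $\widetilde f \in \L^{2_*}(\ree)$. Membership $v \in V$ is immediate from $u \in \Lloc^2(I_0; \W_{\loc}^{1,2}(Q_0))$ together with the compact support of $\chi$. For $\widetilde F = -A(u\nabla \chi) + F\chi$, the boundedness of $A$ and the local $\L^2$-integrability of $u$ and $F$ together with the compact support of $\nabla \chi$ and $\chi$ give $\widetilde F \in \L^2(\ree)$. The function $\widetilde f$ has four summands: $\chi f \in \L^{2_*}(\ree)$ by the hypothesis $f \in \Lloc^{2_*}(\Omega)$, while $(\pd_t \chi) u$, $A \nabla u \cdot \nabla \chi$, and $F \cdot \nabla \chi$ each lie in $\L^2(\ree)$ with support in a fixed compact subset of $\ree$. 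Since $2_* < 2$, H\"older's inequality on this compact support embeds $\L^2 \hookrightarrow \L^{2_*}$, so $\widetilde f \in \L^{2_*}(\ree)$.

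Applying Proposition~\ref{prop:1} to $v$ now yields $v \in \H^{1/2}(\R;\L^2(\R^n))$, $v \in \L^{2^*}(\ree)$, and absolute continuity of $\|v(t,\blank)\|_2^2$, together with
\begin{align*}
 \|v\|_{2^*} + \|\dhalf v\|_{2} \lesssim \|v\|_V + \|\widetilde f\|_{2_*} + \|\widetilde F\|_2,
\end{align*}
which, recalling $\|v\|_V \simeq \|u\chi\|_2 + \|\nabla(u\chi)\|_2$, is precisely \eqref{eq:qestimate}. To upgrade this per-$\chi$ information to the global statements $u \in \Lloc^{2^*}(\Omega)$ and $u \in \C(I_0; \Lloc^2(Q_0))$, a standard exhaustion argument suffices: for any compact $K \subset \Omega$ pick $\chi \in \C_0^\infty(\Omega)$ with $\chi \equiv 1$ on a neighbourhood of $K$; the $\L^{2^*}$-bound and the $\C(\R; \L^2(\R^n))$-regularity of $v = u\chi$ then restrict to the desired properties of $u$ on $K$. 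The argument is essentially assembly; the only mildly delicate point is the verification $\widetilde f \in \L^{2_*}(\ree)$ via the local embedding $\L^2 \hookrightarrow \L^{2_*}$, needed because three of the four constituents of $\widetilde f$ are only globally $\L^2$-integrable and would be ``too large'' outside the compact support of $\chi$.
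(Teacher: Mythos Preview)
Your proof is correct and follows exactly the same approach as the paper: localize via Lemma~\ref{lem:localization}, verify that $v=u\chi\in V$ with $\widetilde F\in\L^2(\ree)$ and $\widetilde f\in\L^{2_*}(\ree)$ (using $2_*<2$ and compact support), then apply Proposition~\ref{prop:1}. Your write-up is in fact more detailed than the paper's, which leaves the $\L^2\hookrightarrow\L^{2_*}$ embedding and the exhaustion argument implicit.
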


\begin{proof}
Let $\chi\in \C_{0}^\infty(\Omega)$.  Set $v:=u\chi$. From Lemma~\ref{lem:localization} we know that $v\in V$ is a weak solution to $\partial_t v -\div A(t,x)\nabla v  = \widetilde f+\div \widetilde F$ in $\ree$ with $\widetilde f, \widetilde F$ given by \eqref{eq:tilde}. Using the assumption on $f, F$, the local square-integrability of $u$ and $\nabla u$, and $2_* < 2$, we see that  $\widetilde f \in \L^{2_*}(\ree)$ and $\widetilde F\in \L^2(\ree)$. The conclusion follows from Proposition~\ref{prop:1}.
\end{proof}

We continue with the Caccioppoli inequality. It will be convenient to formulate it with an additional zero-order term on the right-hand side.

\begin{prop}[Caccioppoli inequality]
\label{ref:cac} 
Let $u$ be a weak solution in $\Omega = I_0 \times Q_0$ to the parabolic problem
\begin{align*}
 \partial_t u - \div A(t,x) \nabla u = f + \div F - Bu,
\end{align*}
where $f\in \Lloc^{2_*}(\Omega)$, $F\in \Lloc^2(\Omega)$ and $B \in \L^\infty ( \Omega; \Lop(\IC^m))$ satisfies $\Re \int_{Q_0} B(x) \phi(x) \cdot \cl{\phi(x)} \; \d x \geq 0$ for all $\phi \in \W_0^{1,2}(Q_0; \IC^m)$.  Let $I \times Q \subset \Omega$ be open parabolic cylinder with $\ell(I) \sim r(Q)^2$ such that for some $\gamma>1$ also the closed cylinder $\overline{\gamma^2I\times \gamma Q}$ is contained in $\Omega$. Then
\begin{align}
\label{eq:Cac}
  \left(\bariint_{I \times Q} |\nabla u|^2  \right)^{1/2} \lesssim \frac{1}{r(Q)} \left(\bariint_{\gamma^2I\times \gamma Q} |u|^2\right)^{1/2} + \left(\bariint_{\gamma^2I\times \gamma Q} |F|^2\right)^{1/2} + r(Q) \left(\bariint_{\gamma^2I\times \gamma Q} |f|^{2_*}\right)^{1/2_*}.
\end{align}
The implicit constant depends on ellipticity, dimensions, $\gamma$, constants controlling the ratio $r^2/\ell$  and $\|B\|_\infty$.
\end{prop}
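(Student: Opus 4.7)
My plan is to reduce the local estimate to the global variational framework of Section~\ref{sec:i} via the localization construction of Lemma~\ref{lem:localization}. Fix a cutoff $\chi\in \C_0^\infty(\overline{\gamma^2 I\times \gamma Q})$ with $\chi\equiv 1$ on $I\times Q$, $\|\nabla\chi\|_\infty \lesssim r(Q)^{-1}$, and $\|\partial_t\chi\|_\infty\lesssim r(Q)^{-2}$, set $v := \chi u$, and treat the zero-order term by replacing $f$ with $f - Bu$ in the equation for $u$. Lemma~\ref{lem:localization} then gives $v\in V$ as a global weak solution to $\partial_t v - \div A\nabla v = \tilde f + \div\tilde F$ with
\begin{align*}
    \tilde f = \chi (f - Bu) + u\,\partial_t\chi - A\nabla u\cdot\nabla\chi - F\cdot \nabla\chi,\qquad \tilde F = \chi F - A(u\otimes \nabla\chi),
\end{align*}
and Theorem~\ref{thm:reg-i} grants $v$ the full structural properties of Proposition~\ref{prop:1}: $v\in V\cap \L^{2^*}(\ree)$, $\dhalf v\in \L^2(\ree)$, and $t\mapsto \|v(t,\cdot)\|_2^2$ is absolutely continuous on $\R$.

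Next I test the global equation for $v$ against $v$ itself (legitimate because $\partial_t v\in V_{2^*}^*$, so Lemma~\ref{lem:3} applies); the time-derivative contribution vanishes by $\Re\dual{\partial_t v}{v}=0$, leaving
\begin{align*}
    \Re\iint_\ree A\nabla v\cdot \overline{\nabla v} = \Re\iint_\ree \tilde f\cdot\bar v - \Re\iint_\ree \tilde F\cdot \overline{\nabla v}.
\end{align*}
The G\aa rding inequality \eqref{eq:accrassumption} integrated in time bounds the left-hand side below by $\lambda\|\nabla v\|_2^2 - \kappa\|v\|_2^2$. The accretivity of $B$ applied sectionwise to $v(t,\cdot)\in \W_0^{1,2}(Q_0)$ makes $-\Re\iint \chi Bu\cdot \bar v = -\Re\iint Bv\cdot\bar v$ non-positive, so it may be transferred to the left with a favourable sign. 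The pieces of $\tilde f$ and $\tilde F$ that do not involve $\nabla u$ are dispatched by Cauchy--Schwarz and Young's inequality, contributing at most $\eps\|\nabla v\|_2^2 + Cr(Q)^{-2}\|u\|_{\L^2(\supp\chi)}^2 + C\|F\|_{\L^2(\supp\chi)}^2$. For $\chi f\cdot \bar v$, I use $\L^{2_*}$--$\L^{2^*}$ duality together with the parabolic Gagliardo--Nirenberg estimate $\|v\|_{2^*}\lesssim (\sup_{t}\|v(t,\cdot)\|_2)^{2/(n+2)}\|\nabla v\|_2^{n/(n+2)}$ and Young's inequality with exponents $(n+2,\, 2(n+2)/n,\, 2)$; the $\sup_t$-factor is itself controlled by Proposition~\ref{prop:1}, leaving $Cr(Q)^2\|f\|_{\L^{2_*}(\supp\chi)}^2$ on the right.

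The delicate term is $A\nabla u\cdot \nabla\chi\cdot \bar v$, which apparently contains $\nabla u$ on the right-hand side. Since $\bar v = \overline{\chi u}$ carries an extra $\overline\chi$ and scalars commute with $A$, I move the $\chi$ onto $\nabla u$ and use $\chi\nabla u = \nabla v - u\otimes \nabla\chi$ to rewrite
\begin{align*}
    A\nabla u\cdot \nabla\chi\cdot \overline{\chi u} = A(\chi\nabla u)\cdot \overline{u\otimes\nabla\chi} = A\nabla v\cdot \overline{u\otimes \nabla\chi} - A(u\otimes\nabla\chi)\cdot \overline{u\otimes\nabla\chi};
\end{align*}
the first piece is absorbed against $\|\nabla v\|_2^2$ via Young, the second bounded by $r(Q)^{-2}\|u\|_{\L^2(\supp\chi)}^2$. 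Collecting all contributions, absorbing the $\eps\|\nabla v\|_2^2$ pieces into $\lambda\|\nabla v\|_2^2$ on the left, and observing that $\iint_{I\times Q}|\nabla u|^2 = \iint_{I\times Q}|\nabla v|^2 \le \|\nabla v\|_2^2$ since $\chi\equiv 1$ there, yield the non-averaged form of \eqref{eq:Cac}; normalising by $|I\times Q|\sim r(Q)^{n+2}$ and $|\gamma^2 I\times \gamma Q|$ gives the averaged form stated. The main bookkeeping obstacle is closing the loop in the $f$-term, where one must control $\|\nabla v\|_2$ and $\sup_t\|v(t,\cdot)\|_2$ simultaneously---a task made tractable precisely by the global variational framework just built.
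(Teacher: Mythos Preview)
Your overall architecture matches the paper's: localize via $v=\chi u$, test the global equation against $v$ using Lemma~\ref{lem:3}, apply the G\aa rding inequality, drop the $B$-term by accretivity, and rewrite the commutator $A\nabla u\cdot\nabla\chi\cdot\overline{\chi u}$ exactly as the paper does.

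The gap is in your treatment of $\iint|\chi f||v|$. After Gagliardo--Nirenberg and Young (with small parameter inserted) you have
\begin{align*}
\iint|\chi f||v|\le \eps\|\nabla v\|_2^2+\eps\sup_t\|v(t,\blank)\|_2^2+C_\eps\|\chi f\|_{2_*}^2,
\end{align*}
and you then invoke Proposition~\ref{prop:1} for $\sup_t\|v(t,\blank)\|_2$. But that proposition bounds the latter by $\|v\|_V+\|\widetilde f\|_{2_*}+\|\widetilde F\|_2$, and $\|\widetilde f\|_{2_*}$ contains $\|A\nabla u\cdot\nabla\chi\|_{2_*}\lesssim\|\nabla u\|_{\L^2(\supp\nabla\chi)}$. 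On $\supp\nabla\chi$ the cutoff $\chi$ is not bounded away from zero, so this term cannot be rewritten in terms of $\nabla v$ and absorbed; you are left with $C\eps\,\|\nabla u\|_{\L^2(\gamma^2 I\times\gamma Q)}^2$ on the right, not just $Cr(Q)^2\|f\|_{2_*}^2$ as you claim. Your ``loop'' does not close without an additional iteration lemma, which is absent from the plan.

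The paper circumvents this by pairing $f$ not with $v=u\chi$ but with $w=u\chi^2$: it estimates $\|w\|_{2^*}$ via Proposition~\ref{prop:1}, and the commutators in the equation for $w$ involve $\nabla(\chi^2)=2\chi\nabla\chi$, whose extra factor of $\chi$ converts every occurrence of $\nabla u$ into $\nabla(u\chi)=\nabla v$. Because these occurrences now appear \emph{multiplied by} $\|f\phi\|_{2_*}$, Young's inequality produces a genuine $\eps\|\nabla v\|_2^2$ that absorbs. An alternative repair closer to your line: test on half-lines $(-\infty,T)$ rather than on all of $\R$, so that $\tfrac12\sup_T\|v(T,\blank)\|_2^2$ sits on the \emph{left} alongside $\lambda\|\nabla v\|_2^2$; then your Gagliardo--Nirenberg step absorbs directly and Proposition~\ref{prop:1} is never needed for the sup-norm.
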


Before we give the proof of Caccioppoli's inequality, let us conclude the reverse H\"older estimate of Ne\v cas--\v Sver\'ak~\cite{Necas-Sverak} for $|u|$ in our setting.

\begin{prop}[Reverse H\"older estimate for $u$]
\label{ref:basic}   
Assume that $u$ is a weak solution to $\partial_t u -\div A(t,x)\nabla u  = f+\div F$ on $\Omega= I_{0} \times Q_{0}$ with right-hand side $f\in \Lloc^{2_*}(\Omega; \IC^m)$ and $ F\in \Lloc^2(\Omega; \IC^{mn})$. Let $I \times Q \subset \Omega$ be open parabolic cylinder with $\ell(I) \sim r(Q)^2$ such that for some $\gamma>1$ also the closed cylinder $\overline{\gamma^2I\times \gamma Q}$ is contained in $\Omega$. Then
\begin{align}
\label{eq:rhu}
   \left(\bariint_{I \times Q} | u|^{2^*}\right)^{1/2^*}  \lesssim    \bariint_{\gamma^2I\times \gamma Q} |u| + r(Q) \left(\bariint_{\gamma^2I\times \gamma Q} |F|^2\right)^{1/2} + r(Q)^2 \left(\bariint_{\gamma^2I\times \gamma Q} |f|^{2_*}\right)^{1/2_*},
\end{align}
where the implicit constants depend only on ellipticity, dimensions, $\gamma$ and the constants controlling the ratio $r^2/\ell$. 
\end{prop}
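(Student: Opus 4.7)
The plan is to apply~\eqref{eq:qestimate} from Theorem~\ref{thm:reg-i} to a localization $u\chi$, to use Caccioppoli's inequality (Proposition~\ref{ref:cac}) to remove the gradient terms on the right, and finally to promote the resulting $\L^2$-norm of $u$ to an $\L^1$-norm through interpolation combined with a standard hole-filling iteration in the spirit of Ne\v cas--\v Sver\'ak.

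Concretely, for $1 \le s < \sigma \le \gamma$ I set $\Omega_\tau := \tau^2 I \times \tau Q$ and pick, using $\ell(I) \sim r(Q)^2$, a cutoff $\chi \in \C_{0}^\infty(\Omega_\sigma)$ with $\chi \equiv 1$ on $\Omega_s$, $\|\nabla \chi\|_\infty \lesssim ((\sigma-s)\, r(Q))^{-1}$, and $\|\pd_t \chi\|_\infty \lesssim ((\sigma-s)\, r(Q))^{-2}$. Applying~\eqref{eq:qestimate} to $u\chi$ and expanding $\widetilde f, \widetilde F$ via Lemma~\ref{lem:localization} produces $\L^{2_*}$- and $\L^2$-norms of several objects on $\Omega_\sigma$ involving $u$, $\nabla u$, $F$, and $f$. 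Since the supports lie in $\Omega_\sigma$ and $(n+2)(\tfrac{1}{2_*}-\tfrac{1}{2}) = 1$, H\"older's inequality yields $\|g\|_{\L^{2_*}(\Omega_\sigma)} \lesssim r(Q)\, \|g\|_{\L^2(\Omega_\sigma)}$ for any $g$, which brings every contribution except $\chi f$ into a common $\L^2$-framework. I then invoke Proposition~\ref{ref:cac} (with $B \equiv 0$) on a cylinder intermediate between $\Omega_s$ and $\Omega_\sigma$ to replace $\|\nabla u\|_{\L^2}$ by $r(Q)^{-1}\|u\|_{\L^2(\Omega_\sigma)}$ plus data. The outcome, uniformly in $1 \le s < \sigma \le \gamma$ and for some exponent $\alpha$ depending only on dimensions, is an inequality of the form
\begin{align*}
 \|u\|_{\L^{2^*}(\Omega_s)} \le \frac{C}{(\sigma-s)^{\alpha}\,r(Q)}\, \|u\|_{\L^2(\Omega_\sigma)} + C\, \|F\|_{\L^2(\Omega_\sigma)} + C\, r(Q)\, \|f\|_{\L^{2_*}(\Omega_\sigma)}.
\end{align*}

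To eliminate the $\L^2$-norm of $u$, I interpolate $\L^2$ between $\L^1$ and $\L^{2^*}$: H\"older yields $\|u\|_{\L^2(\Omega_\sigma)} \le \|u\|_{\L^1(\Omega_\sigma)}^{\theta}\, \|u\|_{\L^{2^*}(\Omega_\sigma)}^{1-\theta}$ with $\theta = 2/(n+4)$, and Young's inequality turns this into $\|u\|_{\L^2(\Omega_\sigma)} \le \eps \|u\|_{\L^{2^*}(\Omega_\sigma)} + C_\eps \|u\|_{\L^1(\Omega_\sigma)}$ for every $\eps > 0$. Substituting and choosing $\eps$ as a suitable power of $\sigma-s$ puts the previous display in the standard iteration form $\Phi(s) \le \tfrac{1}{2}\Phi(\sigma) + (\sigma-s)^{-N} \mathcal{D}$, where $\Phi(\tau) := \|u\|_{\L^{2^*}(\Omega_\tau)}$ and $\mathcal{D}$ is a sum of $\|u\|_{\L^1(\Omega_\gamma)}$ and of the data norms weighted by the appropriate powers of $r(Q)$. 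The classical hole-filling iteration lemma (Giaquinta--Modica) then absorbs $\Phi(\sigma)$ and returns $\Phi(1) \lesssim \mathcal{D}$. Dividing through by the relevant volumes and using $|\Omega_\tau| \sim r(Q)^{n+2}$ converts norms into averages and produces~\eqref{eq:rhu}.

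The main obstacle I expect is the careful bookkeeping of the powers of $r(Q)$ and $(\sigma-s)^{-1}$ along this chain of estimates, so that after the interpolation and iteration the factors $1$, $r(Q)$, and $r(Q)^2$ in front of the three averages in~\eqref{eq:rhu} arise with the correct scaling exponents; the parabolic Sobolev arithmetic $(n+2)(\tfrac{1}{2_*} - \tfrac{1}{2^*}) = 2$ is what ultimately matches the prefactors on the data side.
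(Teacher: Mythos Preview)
Your proposal is correct and follows the paper's strategy: combine \eqref{eq:qestimate} with the Caccioppoli inequality (Proposition~\ref{ref:cac}) to obtain \eqref{eq:rhu} with $\big(\bariint_{\gamma^2I\times \gamma Q} |u|^2\big)^{1/2}$ on the right, then upgrade this $\L^2$ average to an $\L^1$ average. The only difference is in that last step: the paper treats the passage from $\L^2$ to $\L^1$ as a black box, invoking a general self-improvement property of reverse H\"older inequalities (Theorem~2 in \cite{IN}, or Theorem~B.1 in \cite{BCF} for the parabolic scaling), whereas you prove it by hand via the interpolation $\|u\|_{2} \le \eps\|u\|_{2^*} + C_\eps\|u\|_{1}$ and a Giaquinta--Modica hole-filling iteration. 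Your route is thus a direct, self-contained proof of precisely the instance of self-improvement that the paper quotes; it buys independence from the cited references at the cost of writing out the (standard) iteration.
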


\begin{proof} 
The equation  \eqref{eq:rhu} with $\big(\bariint_{\gamma^2I\times \gamma Q} |u|^2\big)^{1/2}$ on the right-hand side follows from \eqref{eq:qestimate} and Proposition~\ref{ref:cac} -- at least when $r =1$, which suffices since our hypotheses are invariant under rescaling.  The improvement to $\bariint_{\gamma^2I\times \gamma Q} |u|$ follows from a classical self-improvement feature of reverse H\"older inequalities, see Theorem~2 in \cite{IN}. A simple proof that applies in our situation with parabolic scaling can be found in Theorem~B.1 of \cite{BCF}.
\end{proof}

\begin{rem}
Under suitable assumptions on $f$ and $F$ the classical Gehring lemma (with parabolic scaling) can be used to improve the exponent of integrability on the left-hand side to $2^* + \eps$, where $\eps>0$ depends on ellipticity, dimensions, $\gamma$ and the constants controlling the ratio $r^2/\ell$. For $f=0$ and $F=0$, the argument is found in the textbook~\cite{Bjorn-Bjorn}.
\end{rem}

Concatenating \eqref{eq:Cac} and \eqref{eq:rhu} yields

\begin{cor}[Improved Caccioppoli inequality]
Under the assumptions and with the notation of Proposition~\ref{ref:basic}, it holds
\begin{align}
\label{eq:Cac1}
  \left(\bariint_{I \times Q} |\nabla u|^2  \right)^{1/2} \lesssim \frac{1}{r(Q)} \bariint_{\gamma^2I\times \gamma Q} |u| + \left(\bariint_{\gamma^2I\times \gamma Q} |F|^2\right)^{1/2} + r(Q) \left(\bariint_{\gamma^2I\times \gamma Q} |f|^{2_*}\right)^{1/2_*}.
\end{align}
\end{cor}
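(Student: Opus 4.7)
The statement is tagged as an immediate concatenation of \eqref{eq:Cac} and \eqref{eq:rhu}, and indeed the whole task is to chain them while keeping track of the enlargement factors so that the final average sits on $\gamma^2 I \times \gamma Q$. Let me spell out the plan.

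First I would reduce the problem to a two-step inequality by splitting the enlargement $\gamma$ in half. Set $\eta := \sqrt{\gamma}$, so $\eta^2 = \gamma$, and consider the intermediate cylinder $\eta^2 I \times \eta Q$, which is compactly contained in $\gamma^2 I \times \gamma Q = \eta^4 I \times \eta^2 Q$. The hypothesis $\overline{\gamma^2 I \times \gamma Q} \subset \Omega$ ensures both intermediate cylinders lie in $\Omega$, so Proposition~\ref{ref:cac} (with $B \equiv 0$, which is allowed since the weak solution in Proposition~\ref{ref:basic} satisfies the equation without a zero-order term) and Proposition~\ref{ref:basic} apply with enlargement factor $\eta$.

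Next I would apply the Caccioppoli inequality \eqref{eq:Cac} on $I \times Q$ with this enlargement factor $\eta$, which gives
\begin{align*}
\left(\bariint_{I \times Q} |\nabla u|^2\right)^{1/2}
\lesssim \frac{1}{r(Q)}\left(\bariint_{\eta^2 I \times \eta Q} |u|^2\right)^{1/2} + \left(\bariint_{\eta^2 I \times \eta Q} |F|^2\right)^{1/2} + r(Q) \left(\bariint_{\eta^2 I \times \eta Q} |f|^{2_*}\right)^{1/2_*}.
\end{align*}
Then I would raise the exponent on the $|u|^2$ average using Jensen's inequality, $(\bariint |u|^2)^{1/2} \leq (\bariint |u|^{2^*})^{1/2^*}$ (valid since $2 < 2^*$), and apply the reverse Hölder inequality \eqref{eq:rhu} on $\eta^2 I \times \eta Q$ with the same enlargement factor $\eta$, which places the $|u|$, $|F|$, and $|f|^{2_*}$ integrals on the final cylinder $\eta^4 I \times \eta^2 Q = \gamma^2 I \times \gamma Q$. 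For the $F$ and $f$ terms coming directly from Caccioppoli (not through reverse Hölder), I would enlarge the domain of averaging from $\eta^2 I \times \eta Q$ to $\gamma^2 I \times \gamma Q$, which costs at most a constant depending on $\eta$ (and hence on $\gamma$).

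Finally I would collect the $r(Q)$ factors. Caccioppoli produces $\tfrac{1}{r(Q)}$ in front of the $u$-average; reverse Hölder on the intermediate cylinder produces $r(\eta Q) = \eta\, r(Q)$ in front of $|F|$ and $r(\eta Q)^2 = \eta^2 r(Q)^2$ in front of $|f|^{2_*}$. Multiplying through, the $r(Q)$-scalings match exactly those on the right-hand side of the claimed inequality, and the extra powers of $\eta = \sqrt{\gamma}$ are absorbed in the implicit constant. I do not foresee any genuine obstacle; the only thing to be careful about is bookkeeping the enlargement factors so that no average extends beyond $\gamma^2 I \times \gamma Q$, and this is precisely what the splitting $\gamma = \eta \cdot \eta$ achieves.
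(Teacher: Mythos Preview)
Your proposal is correct and follows exactly the approach the paper intends: the corollary is stated there as an immediate concatenation of \eqref{eq:Cac} and \eqref{eq:rhu}, and your two-step splitting with $\eta=\sqrt{\gamma}$ is the natural way to make the bookkeeping of enlargement factors precise. The intermediate passage through the $\L^{2^*}$ average via Jensen is harmless (and could even be bypassed, since the proof of Proposition~\ref{ref:basic} already establishes \eqref{eq:rhu} with the $\L^2$ average on the left as an intermediate step).
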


We complete the section with the proof of Caccioppoli's inequality. The argument follows the traditional one and can be omitted on a first reading.

\begin{proof}[Proof of Proposition~\ref{ref:cac}]
For the argument set $q:=2^*$ with H\"older conjugate $q'=2_*$. By scaling we may assume $r=1$ as before. We pick $\chi\in \C_{0}^\infty(\ree)$, real-valued, with $\chi=1$ on $I \times Q$ and support contained in $\gamma^2 I \times \gamma Q$.  As in Lemma~\ref{lem:localization} we write the equation satisfied by $v:=u\chi$ as $\partial_t v   = \widetilde f+\div (A \nabla v + \widetilde F) - Bv$ with 
\begin{align}
\label{eq:tilde_again}
\begin{split}
   \widetilde f &  = \chi f + (\pd_{t}\chi) u  - A\nabla u\cdot \nabla \chi - F\cdot \nabla \chi,\\
   \widetilde F &  = - A (u\nabla \chi) + F\chi.
\end{split}
\end{align}
We have $v \in V_q$ thanks to Proposition~\ref{prop:1} and from the assumptions on $f$, $F$, $B$ and H\"older's inequality we can infer $\partial_t v \in V_q^*$. Thus, Lemma~\ref{lem:3} yields
 \begin{equation}
\label{eq:cac1}
 0= \Re \langle \partial_t v, v \rangle = -  2 \Re \iint (A\nabla v+\widetilde F)\cdot \overline{\nabla v} \; \d x \, \d t+   2 \Re \iint \widetilde f\,  \overline{ v}\; \d x \, \d t - 2 \Re \iint Bv \cdot \cl{v} \; \d x \, \d t.
\end{equation}
First, we note that the final integral on the right has positive real part by assumption. We then isolate $\Re (A\nabla v \cdot \overline{\nabla v})$ and use Young's inequality to give
\begin{align*}
 2  \Re \iint A\nabla v\cdot \overline{\nabla v} 
 \le     \iint  \lambda   |\nabla v|^2 + \lambda^{-1}     |\widetilde F  |^2 + 2  |\widetilde f v|.
\end{align*}
Now, we apply G\aa rding's inequality \eqref{eq:accrassumption} on the left and cancel $ \lambda |\nabla v|^2$ on both sides to obtain
\begin{align}
\label{eq:cac2}
 \lambda  \iint |\nabla v|^2 \le   \iint \lambda^{-1} |\widetilde F  |^2 + 2 |\widetilde f v| + 2\kappa|v|^2. 
\end{align}
We pick yet another real-valued function $\phi\in \C_{0}^\infty(\ree)$, supported in $\gamma^2 I \times \gamma Q$ with $\phi = 1$ on the support of $\chi$. On recalling $v = \chi u$, we can hence write 
\begin{align*}
  \widetilde f v&  = f\phi u \chi^2 + (\pd_{t}\chi) \chi u^2  - A\nabla u\cdot (\nabla \chi) u\chi - F\cdot (\nabla \chi) u \chi.
\end{align*}
We insert this expression along with the definitions of $v$ and $\widetilde F$ into \eqref{eq:cac2}. Then we can estimate all terms appearing on the right but $f\phi u \chi^2$ simply by Cauchy-Schwarz' and Young's inequality and use the uniform bound for $A$ whenever convenient. This results in
\begin{align}
\label{eq:cac3}
  \lambda  \iint |\nabla (u\chi)|^2 
  \lesssim  \text{I} + \iint |u\chi A \nabla u \cdot \nabla \chi| + \iint |f\phi||u\chi^2|,
\end{align}
where
\begin{align*}
 \text{I} := \iint |u|^2 ( |\nabla \chi|^2 +|\pd_{t}\chi|+ |\chi|^2) + \iint |F|^2(|  \chi|^2+ |\nabla \chi|^2).
\end{align*}
As for the second term on the right-hand side of \eqref{eq:cac3}, we write
\begin{align*}
 u \chi A \nabla u \cdot \nabla \chi = u \Big(A \nabla (u\chi) \cdot \nabla \chi - A (u \nabla \chi) \cdot \nabla \chi \Big).
\end{align*}
This allows us to apply Young's inequality with parameters chosen such that the contribution of $\nabla (u\chi)$ appearing on the right-hand side of \eqref{eq:cac3} can be absorbed into the left-hand side. Invoking again the uniform bound for $A$, all other terms created in this step will only increase the implicit constant in front of $\text{I}$. Thus, we can note
\begin{align}
\label{eq:cacciointermediate}
  \iint |\nabla (u\chi)|^2  \lesssim  \text{I } + \iint |f\phi||u\chi^2|.
\end{align}
Had we assumed $f \in \Lloc^2(\Omega)$, then a simple application of the Cauchy-Schwarz inequality would complete the argument with an $\L^2$ average on the right-hand side. But we only assumed $f \in \Lloc^{q'}(\Omega)$. 

In order to master the situation, we introduce $w = u \chi^2$, which is of the same nature as $v$ except that the cut-off function changed from $\chi$ to $\chi^2$. We also define $f', F'$ just as $\widetilde f, \widetilde F$ but with $\chi$ replaced by $\chi^2$. H\"older's inequality followed by the global bound provided by Proposition~\ref{prop:1} (iii) for $w$ yields
\begin{align}
\label{eq:cac4}
\iint |f\phi||u\chi^2| 
\leq \|f\phi\|_{q'} \|w\|_{q}
&\lesssim \|f\phi\|_{q'} \big( \|f'\|_{q'}+ \|F'\|_{2}+ \|w\|_{2}+ \|\nabla w\|_{2} \big).
\end{align}
Crudely using H\"older's inequality on the definition of $f'$, we find
\begin{align*}
 \|f'\|_{q'} \leq \|f \chi \|_{q'} + \|\pd_{t}\chi u\|_2  + \|A\nabla u\cdot \nabla \chi^2\|_2 + \|F\cdot \nabla \chi\|_2.
\end{align*}
We see that up to changing the cut-off function from $\chi$ to $\chi^2$, the terms $\|F'\|_{2}$ and $\|w\|_{2}$ on the right of \eqref{eq:cac4} have already been estimated before when passing from \eqref{eq:cac2} to \eqref{eq:cac3}. Repeating these arguments,
\begin{align*}
 \iint |f\phi||u\chi^2| \leq \text{I } + \|f \phi\|_{q'}^2 + \|f \phi \|_{q'} \|A\nabla u\cdot \nabla (\chi^2)\|_2 + \|f \phi\|_{q'} \|\nabla w\|_2.
\end{align*}
Since we have
\begin{align*}
 A \nabla u \cdot \nabla(\chi^2) = 2 A \nabla (u\chi) \cdot \nabla \chi- 2 A(u \nabla \chi) \cdot \nabla \chi
\end{align*}
and
\begin{align*}
 \nabla w = \nabla(u \chi^2) = \chi \nabla (u\chi) + u\chi \nabla \chi,
\end{align*}
we can use Cauchy-Schwarz' and Young's inequality on the $\L^2$-norms of these two terms to give
\begin{align*} 
 \iint |f\phi||u\chi^2| \leq C(\eps) (\text{I } + \|f \phi\|_{q'}^2) + \eps \|\nabla(u \chi)\|_2^2.
\end{align*}
Here, $\eps > 0$ is at our disposal and $C(\eps)$ is a finite constant that depends on $\eps$. Picking $\eps>0$ small enough, this estimate together with \eqref{eq:cacciointermediate} leads to $\|\nabla(u\chi)\|_2^2 \lesssim \text{I } + \|f \phi\|_{q'}^2$. The conclusion follows from the definition of $\text{I}$ and the defining properties of $\chi$ and $\phi$.
\end{proof}

\begin{rem}
\label{rem:Cac}
Under the stronger assumption $f \in \Lloc^2(\Omega)$ the proof given above yields
\begin{align}
\label{eq:CacUsual}
  \left(\bariint_{I \times Q} |\nabla u|^2  \right)^{1/2} \lesssim \frac{1}{r(Q)} \left(\bariint_{\gamma^2I\times \gamma Q} |u|^2\right)^{1/2} + \left(\bariint_{\gamma^2I\times \gamma Q} |F|^2\right)^{1/2} + r(Q) \left(\bariint_{\gamma^2I\times \gamma Q} |f|^{2}\right)^{1/2}
\end{align}
\emph{without} making use of Proposition~\ref{prop:1}. This observation is important in order to make clear that there is no circular reasoning going on in the proof of the latter. Indeed, we can replace $q$ by $2$ so that $u \in V_q = V_2$ is by definition of weak solutions  and then we follow the proof \emph{verbatim} until we reach \eqref{eq:cacciointermediate}, where now we only have to apply the Cauchy-Schwarz inequality to conclude.
\end{rem}

%%%%%%%%%%%%%%%%%%%%%%%%%%%%%%%%%%%%%%%%%%%%%%%%%%%%%%%%%%%%%%%%%%%%%%%%%%%%%%%%%%%%%%%%%%%%%%%%%%%%%%%%%%%%%%%%%%%%%%%%%%%%%%%%%%%%%%%%%%%%%%%%%%%%%%%%%%%%%%%%%%%%%%%% 
\section{A Gehring type lemma with tail}
\label{sec:Gehring}
 
We provide here the main real analysis lemma to obtain our estimates. For a ball $Q \subset \mathbb{R}^{n}$ and an interval $I \subset \R$ with $\sqrt{\ell(I)} = r(Q) := r$ we write $B := I \times Q$. If $(t,x)$ is the center of $B$, we also use the notation $B = B((t,x),r)$ and $r = r(B)$ for such a parabolic cylinder (that is, a cylinder which is a  ball in the parabolic (quasi-)metric $d((t,x), (s,y)) := \max(\sqrt{2|t-s|}, |x-y| )$). For $u\ge 0$ locally integrable and $\gamma>1$ we define $a_u(B) \in [0,\infty]$ through
\begin{align*}
 a_u (B) := \sum_{j = 0}^{\infty} 2^{-j-1} \bariint_{4^{j} I \times \gamma Q} u \; \d x \, \d t :=  \sum_{j = 0}^{\infty} 2^{-j-1} \barint_{4^{j} I \times \gamma Q} u\, d\mu,
\end{align*}
where for this section $\mu$ denotes the Lebesgue measure on $\ree$ and we use the single integral notation for simplicity. The functional $a_u$ is an approximate identity indexed over radii of parabolic cylinders  when $u \in \L^{p}(\ree)$ for some $p \in (1,\infty)$ in the sense that $a_u(B((t,x),r)) \to u(t,x)$ as $r \to 0$ for almost every $(t,x)$. Indeed, introduce  the maximal operators $M_{x}$ and $M_{t}$  on space and time variables separately. For each $j\ge 0$ we have
\begin{align*}
 \bigg|\barint_{4^{j} I \times \gamma Q} u\, d\mu \bigg| \le M_t M_x u(t,x)
\end{align*}
and as $M_{t}M_{x}$ is bounded on $\L^p(\ree)$, this average converges to $u(t,x)$ for almost every point. So, the claim follows from the dominated convergence theorem for series and $ \sum_{j = 0}^{\infty} 2^{-j-1}=1$.  In addition,  we have $a_u(B((t,x),r)) \to 0$ when $r \to \infty$ by H\"older's inequality. {This last point also holds when $u \in \L^{1}(\ree)$}.

\begin{lem}\label{lem:global}
Let $g,f,h$ be be non-negative functions with  $g^2, f ^{2} , h^{s}  \in \L^{1}(\ree)$ for some $1<s <n+2$,  and suppose that for some $A\ge 1$,
\begin{equation*}
\left( \barint_{B} g^{2}\, d\mu \right)^{1/2} \le A a_g(B) + (a_{f^{2}}(B))^{1/2} + r(B) (a_{h^{s}} (B) )^{1/s}
\end{equation*} 
holds for all parabolic cylinders $B$. Let $p>2$ and suppose there are $\alpha, \beta \geq 0$ and $q_\alpha, q_\beta > 1$ (depending on $p$) such that
\begin{align}
\label{eq:a,b,qa,qb}
 2\alpha + \beta = s \qquad  \text{and} \qquad \frac{1}{q_\alpha} - \alpha = \frac{s}{p} = \frac{1}{q_\beta} - \frac{\beta}{n}.
\end{align}
If $|p-2|$ is sufficiently small (depending on $A$ and dimension), then
\[ \|g\|_{\L^{p}(\mathbb{R}^{n+1})} \lesssim  \|f\|_{\L^{p}(\mathbb{R}^{n+1})} + \|h^{s}\|_{\L^{q_\alpha}(\mathbb{R}; \L^{q_\beta}(\mathbb{R}^n))} ^{1/s}. \]
The implicit constant depends on $A$, $\alpha$, $\beta$, $q_\alpha$, $q_\beta$ and dimension.
\end{lem}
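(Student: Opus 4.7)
The strategy is to adapt the classical parabolic Gehring lemma to this non-local tail setting. The key observation is that $a_u(B)$ is dominated pointwise by a parabolic strong maximal function of $u$, after which a Calderón--Zygmund / layer-cake argument can be run in the familiar way. Let $M := M_{t}M_{x}$ denote the parabolic strong maximal operator on $\ree$. For a parabolic cylinder $B = I \times Q$ centered at $(t_0,x_0)$, the discussion preceding the lemma shows that $\barint_{4^{j}I\times\gamma Q} u\, d\mu \le c\, Mu(t_0,x_0)$ for every $j\ge 0$, and summing the geometric weights $\sum_{j\ge 0}2^{-j-1}=1$ gives $a_u(B)\le c\, Mu(t_0,x_0)$ for each of $u=g$, $u=f^{2}$, $u=h^{s}$. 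Inserted into the hypothesis, this produces at every Lebesgue point $(t_0,x_0)$ and every cylinder $B$ centered there the pointwise local reverse Hölder inequality
\[
\Bigl(\barint_{B} g^{2}\,d\mu\Bigr)^{1/2} \le c_1\bigl[\, A\,Mg(t_0,x_0) + (Mf^{2})^{1/2}(t_0,x_0) + r(B)(Mh^{s})^{1/s}(t_0,x_0)\,\bigr].
\]

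For a fixed level $\lambda>0$, perform a parabolic Whitney decomposition of $E_\lambda := \{Mg > \lambda\}$ into disjoint cylinders $\{B_i\}$ of bounded overlap, whose enlargements $B_i^{*}$ satisfy $\barint_{B_i^{*}} g\,d\mu \lesssim \lambda$ by the stopping time. Applying the displayed inequality on each $B_i$ at the stopping scale and summing over $i$, together with the stopping-time bound $Mg\le \lambda$ off $E_\lambda$, yields a level-set estimate of the form
\[
\int_{\{g>\lambda\}} g^{2}\,d\mu \le C\,\lambda^{2}\,|\{Mg>c\lambda\}| + (\text{error terms in }f\text{ and }h),
\]
with $C$ depending only on $A$ and dimension.

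Now insert this into the layer-cake identity $\int g^{p} = (p-2)\int_{0}^{\infty}\lambda^{p-3}\int_{\{g>\lambda\}} g^{2}\,d\mu\, d\lambda$. The main term contributes $C(p-2)\int_{0}^{\infty}\lambda^{p-1}|\{Mg>c\lambda\}|\,d\lambda$, which the Hardy--Littlewood maximal theorem on $\L^{p}(\ree)$ dominates by $C'(p-2)\|g\|_{p}^{p}$. Choosing $|p-2|$ small enough---depending only on $A$ and dimension---so that $C'(p-2)<1$ allows us to absorb this term, once $\|g\|_{p}$ has been made a priori finite via the standard truncation $g\mapsto\min(g,K)$ with constants uniform in $K$ followed by the passage $K\to\infty$. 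The $f$-error is handled analogously by the maximal theorem. For the $h$-contribution, the exponent relations $2\alpha+\beta=s$ and $1/q_{\alpha}-\alpha=s/p=1/q_{\beta}-\beta/n$ are precisely the scaling of a parabolic mixed-norm Hardy--Littlewood--Sobolev estimate: one splits $r(B)=r(B)^{2\alpha/s}\cdot r(B)^{\beta/s}$ and treats the time factor as a fractional integral in $t$ (accounting for $q_{\alpha}$) and the space factor as a Riesz potential in $x$ (accounting for $q_{\beta}$), yielding $\|r(B)(Mh^{s})^{1/s}\|_{\L^{p}(\ree)} \lesssim \|h^{s}\|_{\L^{q_{\alpha}}(\R;\L^{q_{\beta}}(\R^n))}^{1/s}$.

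\emph{Main obstacle.} The delicate step is producing the level-set bound above with a constant $C$ depending only on $A$ and dimension, because the tail interlinks cylinders across all time scales. The pointwise reduction to $M$ described in the first paragraph is what makes this possible: once $a_u$ has been replaced by $Mu$, the Whitney decomposition of $\{Mg>\lambda\}$ becomes standard, the stopping-time bound $Mg\le\lambda$ off $E_\lambda$ controls the right-hand side of the reverse Hölder uniformly in $\lambda$, and the multiplicative gain needed to absorb the leading term when $|p-2|$ is small survives. Tracking constants along this chain is what ties the allowed range of $p$ to the structural parameter $A$ and the dimension, exactly as stated.
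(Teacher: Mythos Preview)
Your overall architecture---layer cake, maximal-function control of the tails, absorption for small $p-2$, truncation to make $\|g\|_p$ a priori finite---matches the paper's. The gap is in how you run the stopping/covering step and, as a consequence, how you handle the $h$-term.

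You do the Whitney decomposition of $\{Mg>\lambda\}$ alone, and then try to control the error contributions from $f$ and $h$ on each Whitney cube $B_i$ separately. The trouble is the quantity you write as $r(B)(Mh^{s})^{1/s}$: this is not a function on $\ree$, so the line $\|r(B)(Mh^{s})^{1/s}\|_{\L^{p}(\ree)}\lesssim\|h^{s}\|_{\L^{q_\alpha}(\L^{q_\beta})}^{1/s}$ does not make sense. More concretely, the Whitney radii $r(B_i)$ are dictated by the geometry of $\{Mg>\lambda\}$ and carry no relation to $h$, so there is no reason $r(B_i)^{s}\,Mh^{s}$ at your stopping point should be comparable to $\lambda^{s}$. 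What you actually need is the pointwise bound
\[
r(B)^{s}\,a_{h^{s}}(B)=\ell(I)^{\alpha}\,r(Q)^{\beta}\,a_{h^{s}}(B)\le M_t^{\alpha}M_x^{\beta}(h^{s})
\]
at any point of $B$, i.e.\ the \emph{fractional} maximal function, not $r(B)$ times the strong one.

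The paper resolves this by coupling all three terms in the stopping time: it sets $r_{t,x}=\sup\{r>0:\,G+F+H>\lambda\}$, where $G,F,H$ are the three tail quantities $a_g$, $a_{f^{2}}^{1/2}$, $r\cdot a_{h^{s}}^{1/s}$ centered at $(t,x)$. By continuity $G+F+H=\lambda$ at the stopped radius, so the hypothesis gives $g^{2}(E_\lambda)\le A^{2}\lambda^{2}\mu(\bigcup_i B_i)$ directly, while at the same time at least one of $G,F,H$ is $\ge\lambda/3$, yielding
\[
\bigcup_i B_i\subset\{M_tM_x g\ge \lambda/3\}\cup\{M_tM_x f^{2}\ge(\lambda/3)^{2}\}\cup\{M_t^{\alpha}M_x^{\beta}h^{s}\ge(\lambda/3)^{s}\}.
\]
Now the three contributions in the layer cake are exactly $\|M_tM_xg\|_p^p$, $\|M_tM_x f^{2}\|_{p/2}^{p/2}$, and $\|M_t^{\alpha}M_x^{\beta}h^{s}\|_{p/s}^{p/s}$, and the last is bounded by $\|h^{s}\|_{\L^{q_\alpha}(\L^{q_\beta})}^{p/s}$ via the fractional maximal inequalities encoded in \eqref{eq:a,b,qa,qb}. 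The paper's absorption of the $g$-term is also slightly sharper than the one you sketch: it uses the inclusion $\{Mg\ge\lambda/3\}\subset\{M(g\mathbf{1}_{\{g>\lambda/6\}})\ge\lambda/6\}$ and a weak-type $(3/2,3/2)$ bound, which after Fubini yields a constant $C(p-2)/(p-3/2)$ rather than a full $\L^p$ maximal bound (the latter would blow up as $p\to 2$).

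In short, your plan becomes correct once you (i) replace the Whitney decomposition of $\{Mg>\lambda\}$ by a Vitali-type stopping on $G+F+H>\lambda$, and (ii) recognize that the $r(B)$ weight must be absorbed into a fractional maximal function before you take norms.
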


\begin{proof}
Let $m  > 0$. Denote $g_m := \min (g,m)$. By the Cavalieri principle we have 
\begin{align*}
\int_\ree g_m^{p-2}g^2\, d\mu  & = (p-2)\int_0^m \lambda^{p -2 - 1} g^2(\{g > \lambda\})\, d\lambda,
\end{align*}
where $g^2(A):=\int_{A}g^2\, d\mu$.
We define three functions 
\begin{align*}
G(t,x,r)	&: =  a_g(B((t,x),r)), \\
F(t,x,r)	&: = a_{f^2}(B((t,x),r))^{1/2}, \\
H(t,x,r)	&: = r \cdot a_{h^s}(B((t,x),r))^{1/s}
\end{align*}
and for $\lambda > 0$, we denote $E_\lambda :=  \{g > \lambda\}$. We have 
{\[\lim_{r \to 0} G(t,x,r) = g(t,x) \]}
for almost every $(t,x)$ by the discussion before the statement of the lemma and we define $\widetilde E_{\lambda}$ as the subset of $E_{\lambda}$ where this holds. We also note 
\[\lim_{r \to \infty} \big( G(t,x,r) + F(t,x,r) + H(t,x,r) \big) = 0 \]
for all $(t,x)$, using the global assumptions on $g,f,h$ and $s< n+2$.

By definition, if $(t,x) \in \widetilde E_\lambda$, then
\[\lim_{r \to 0} G(t,x,r)  > \lambda,\]
and thus for $(t,x) \in \widetilde E_\lambda$ we can define the stopping time radius
\begin{equation*}
r_{t,x}:= \sup\{ r > 0:  G(t,x, r) + F(t,x,r) + H(t,x,r) > \lambda \}.
\end{equation*}
We readily see that  $\sup_{(t,x)\in \widetilde E_{\lambda}} r_{t,x} < \infty$. Indeed, since $G, F, H$ are continuous functions of $r > 0$ for fixed $(t,x)$, we have at $r=r_{t,x}$ {equality $G(t,x, r) + F(t,x,r) + H(t,x,r) = \lambda$ and thus} either $G(t,x, r)\ge \lambda/3$  or  $F(t,x,r) \ge \lambda/3$ or $H(t,x,r) \ge \lambda/3$. In the last case for example, we obtain 
\begin{align*}
 r^{n+2-s}(\lambda/3)^s \lesssim \int_{\ree} h^s \, d\mu <\infty
\end{align*}
and the other cases give us an upper bound on $r$ in a similar manner. By the Vitali covering lemma, there exists an absolute constant $K$ and a countable collection of balls $\{B((t_{i},x_i),r_i)\} = \{B_i\}$ with $r_i = r_{t_{i},x_i}$ such that the $\tfrac{1}{K}B_i$ are pairwise disjoint and $\widetilde E_\lambda \subset \cup_i B_i$. (A value of $K$ can be computed explicitly by following the usual proofs in this particular quasi-metric.)

Now, using the hypothesis for each $B_{i}$ and pairwise disjointness of the balls $\tfrac{1}{K}B_i$, we find
\begin{align*}
g^2(\widetilde E_{\lambda})  &\le \sum_i  g^2({B_i}) 
		\le \sum_i \mu(B_i) \Big(Aa_g(B_i) + (a_{f^2}(B_i))^{1/2} + r_i (a_{h^s}(B_i))^{1/s}\Big)^2 \\
		&\leq A^2 \sum_i  \mu(B_i)\lambda^{2} \le A^2 K^{n+2} \sum_i  \mu(\tfrac 1 K B_i) \lambda^{2} \le A^2 K^{n+2} \lambda^{2}  \mu \bigg( \bigcup_{i}  B_i \bigg).
\end{align*}
Let $M_x^{\beta}$ be the fractional maximal function with respect to the $x$-variable:
\[M_x^{\beta}v(t,x):= \sup_{Q\ni x}   r(Q)^{ \beta } \barint_{Q} |v(t,y)| \,  d y.  \] 
Similarly, define $M_{t}^\alpha$ with respect to the $t$-variable. Since $2\alpha + \beta = s$, the parabolic scaling $r(B) = r(Q) = \sqrt{\ell(I)}$ yields $r(B)^s = r(Q)^\beta \times \ell(I)^\alpha$. Thus, it follows from the definition of $r_i$ that
\begin{align*}
 \bigcup _i B_i \subset \big\{M_t M_x g \ge \lambda/3\big\} \cup \big\{  M_t M_x(f^2) \ge (\lambda/3)^2 \big\} \cup  \big\{  M_t^{\alpha}M_x^{\beta } (h^s) \ge (\lambda/3)^s \big\}  =: S_\lambda.
\end{align*}

We thus have established
\[g^2(E_\lambda)=  g^2(\widetilde E_\lambda) \leq A^2 K^{n+2} \lambda^{2} \mu(S_\lambda). \]
Going back to the start of the proof, so far we have found
\begin{align}
\label{eq1:Gehring}
\begin{split}
 \int_\ree g_m^{p-2}g^2\, d\mu
&=(p-2)\int_0^m \lambda^{p - 3} g^2(\{g > \lambda\})\, d\lambda \\
&\leq A^2 K^{n+2} (p-2)\int_0^m \lambda^{p  - 1} \mu(S_\lambda)\, d\lambda  \\
&\leq A^2 K^{n+2}(\text{I} + \text{II} + \text{III}),
\end{split}
\end{align}
where the integrals $\text{I}, \text{II}, \text{III}$ correspond to the decomposition of $S_\lambda$ above. By the Cavalieri principle, we obtain for $p>2$,
\begin{align*}
\text{II} 		&\le \frac{p-2}{p} \int_\ree M_t M_x (f^2) ^ {p/2}\, d\mu  \lesssim \frac{p}{p-2}   \int_{\ree} f^{p}\, d\mu 
\end{align*}
by iterating the two maximal function $\L^{p/2}$ bounds, so that the implicit constant depends only on the dimension $n$. {Note that $p>2$ and that $p$ is determined by the other parameters in \eqref{eq:a,b,qa,qb}}. 
Similarly,
\begin{align*}
\text{III}		&\le \frac{p-2}{p} \int_\ree  M_t^{\alpha} M_x^{ \beta } (h^{s}) ^{p/s} \, d\mu .
\end{align*}
By hypothesis,  we have exponents $q_\alpha, q_\beta > 1$ such that
\[ \frac{1}{q_\alpha} - \alpha = \frac{s}{p} = \frac{1}{q_\beta} - \frac{\beta}{n}. \]
With a slight abuse in our notation, ignoring the other variable, these are precisely the conditions guaranteeing that $M_t^{\alpha} : \L^{q_\alpha}(\mathbb{R}) \to \L^{p/s}(\mathbb{R})$ and $M_x^{\beta} : \L^{q_\beta}(\mathbb{R}^{n}) \to \L^{p/s}(\mathbb{R}^{n})$ are bounded, see Theorem~3.1.4 in \cite{Adams-Hedberg}. Now, using this and Minkowski's inequality along with $s q_\alpha/ p = 1 - \alpha q_\alpha \leq 1$  in the second step, we see that
\begin{align*}
 \int_\ree   M_t^{\alpha}M_x^{ \beta }(h^{s}) ^{p/s}\, d\mu  
 &\lesssim   \int_{\R^n} \bigg(\int_{\R} M_x^{\beta}(h^s)^{q_\alpha} \, d t\bigg)^{p/(sq_\alpha)} \; \d x\\
 &\leq  \bigg(\int_{\R} \bigg(\int_{\R^n} M_x^{\beta}(h^s)^{p/s} \, d x\bigg)^{sq_\alpha/p} \; \d t \bigg)^{p/(sq_\alpha)}
 \lesssim \|h^{s}\|_{\L^{q_\alpha}(\mathbb{R};  \L^{q_\beta}(\mathbb{R}^n))}^{p/s}
\end{align*}
with implicit constant depending on $\alpha, \beta, q_\alpha, q_\beta$ and dimension.
The remaining term is
\begin{align*}
\text{I} 		&= (p-2) \int_{0}^{m} \lambda^{p-1} \mu(\{ M_t M_x g \ge \lambda/3\}) \, d \lambda .
\end{align*}
To handle $I$, we first notice that 
$$\{ M_t M_x g \ge \lambda/3\} \subset \{M_t M_x (g\mathbf{1}_{\{g > \lambda/6\}}) \ge \lambda/6\}.$$
From this inclusion and the weak type $(\tfrac{3}{2},\tfrac{3}{2})$-bound for the iterated maximal function (which follows from the strong type $(\tfrac{3}{2},\tfrac{3}{2})$), we obtain
$$\mu(\{ M_t M_x g \ge \lambda/3\}) \le \frac{C}{\lambda^{3/2}} \int_{ \{g > \lambda/6\}} g^{3/2} \, d\mu$$
for a dimensional constant $C$. Using this bound in $I$ yields 
\begin{align*}
\text{I} &\le C(p-2)\int_0^m \lambda^{p-5/2}\int_{\{g > \lambda/6\}} g^{3/2} \, d\mu \, d \lambda
\\& = C(p-2) \int_\ree g^{3/2} \int_0^{\min(m, 6g)} \lambda^{p-5/2} \, d\lambda \, d\mu
\\
& = C6^{p-3/2}\frac{p-2}{p-3/2} \int_\ree g_{m/6}^{p-3/2} g^{3/2} \, d \mu
\\
& \leq C6^{p-3/2}\frac{p-2}{p-3/2} \int_\ree g_{m}^{p-2}\, g^{2} \, d \mu.
\end{align*}
Choosing $p - 2>0$ small enough, depending on $A$ and dimension, we see from \eqref{eq1:Gehring} that
\[\int_{\ree} g_{m}^{p-2} g^{2} \, d \mu \leq \frac{1}{2} \int_{\ree} g_{m}^{p-2} g^{2} \, d \mu +  C \Big(\|f\|_{\L^p(\mathbb{R}^{n+1})}^p + \|h^{s}\|_{\L^{q_\alpha}(\mathbb{R}; \L^{q_\beta}(\mathbb{R}^{n}))} ^{p/s} \Big) \]
for some constant $C$ depending on $\alpha, \beta, q_\alpha, q_\beta$ and dimension. This finishes the proof after simplifying the first term and taking the limit $m \to \infty$.
\end{proof}

\begin{rem}
The same estimate also holds with the mixed norm in different order as we are free to interchange the fractional maximal functions.
If we want $q_\alpha = q_\beta$, then \eqref{eq:a,b,qa,qb} reveals that $\alpha, \beta$ are uniquely determined by $s,p$ and $n$.  Hence, for each $p$ there is at most one such pair.
\end{rem}

In the application to our parabolic PDE, we consider special values for the auxiliary parameters in Lemma~\ref{lem:global}.

\begin{cor}
\label{cor:rh} 
Suppose the setup of Lemma~\ref{lem:global} with $s = 2_*$. Then for $p>2$ with $p-2$ small enough depending on $A$ and dimension, 
\begin{align*}
  \|g\|_{\L^{p}(\mathbb{R}^{n+1})} \lesssim  \|f\|_{\L^{p}(\mathbb{R}^{n+1})} + \|h\|_{\L^{p_{*}}(\mathbb{R}^{n+1})}. 
\end{align*}
The implicit constant depends on $A$, $p$ and $n$.
\end{cor}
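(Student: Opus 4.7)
The plan is to invoke Lemma~\ref{lem:global} directly, choosing the free parameters $\alpha,\beta,q_\alpha,q_\beta$ so that $q_\alpha=q_\beta$. When this equality holds, the mixed norm $\|h^s\|_{\L^{q_\alpha}(\R;\L^{q_\beta}(\R^n))}^{1/s}$ collapses to a single Lebesgue norm on $\ree$, which is exactly the right-hand side required by the corollary. The remark following Lemma~\ref{lem:global} already anticipates that once we impose $q_\alpha=q_\beta$, the system \eqref{eq:a,b,qa,qb} with $s=2_*$ has a unique admissible choice of $\alpha,\beta$ in terms of $p$ and $n$, so the proof will be essentially bookkeeping of exponents.

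To carry this out, I first enforce $q_\alpha=q_\beta$. The identity $\tfrac{1}{q_\alpha}-\alpha=\tfrac{1}{q_\beta}-\tfrac{\beta}{n}$ in \eqref{eq:a,b,qa,qb} then forces $\alpha=\beta/n$. Combining this with the constraint $2\alpha+\beta=s=2_*$ gives the unique positive pair $\alpha=2_*/(n+2)$ and $\beta=n\,2_*/(n+2)$. Next I compute the common value of $q_\alpha=q_\beta$. Multiplying the defining relation $\tfrac{1}{p_*}=\tfrac{1}{p}+\tfrac{1}{n+2}$ by $s$ gives the identity $\tfrac{s}{p_*}=\tfrac{s}{p}+\tfrac{s}{n+2}$, and using $\alpha=\tfrac{s}{n+2}$ I obtain
\[
\frac{1}{q_\alpha}=\frac{s}{p}+\alpha=\frac{s}{p}+\frac{s}{n+2}=\frac{s}{p_*},
\]
so $q_\alpha=q_\beta=p_*/s$. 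The condition $q_\alpha>1$ becomes $p_*>s=2_*$, which is equivalent to $p>2$ and thus part of our hypotheses. All assumptions of Lemma~\ref{lem:global} are therefore met.

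Applying Lemma~\ref{lem:global} now yields
\[
\|g\|_{\L^{p}(\ree)} \lesssim  \|f\|_{\L^{p}(\ree)} + \|h^{s}\|_{\L^{p_*/s}(\R;\,\L^{p_*/s}(\R^n))}^{1/s},
\]
and because the two integrability indices coincide, Fubini's theorem collapses the mixed norm to
\[
\|h^s\|_{\L^{p_*/s}(\ree)}^{1/s}=\|h\|_{\L^{p_*}(\ree)},
\]
which is the claimed right-hand side. Since $s=2_*$ is fixed by $n$ and $p$ is restricted to a small neighbourhood of $2$, the parameters $\alpha,\beta,q_\alpha,q_\beta$ stay in a compact set, so the implicit constant coming from Lemma~\ref{lem:global} depends only on $A$, $p$, and $n$. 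No real obstacle is expected: the only content beyond Lemma~\ref{lem:global} is the arithmetic identification of the admissible exponent pair producing equal first and second indices in the mixed norm.
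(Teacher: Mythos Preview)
Your proposal is correct and follows essentially the same approach as the paper: both set $q_\alpha=q_\beta$, solve the constraints \eqref{eq:a,b,qa,qb} with $s=2_*$ to obtain $q_\alpha=q_\beta=p_*/s$ (the paper writes this as $\tfrac{p(n+2)}{s(p+n+2)}$, which is the same number), verify $q_\alpha>1$ from $p>2$, and then apply Lemma~\ref{lem:global}. The only cosmetic difference is that you first solve for $\alpha,\beta$ and then for $q_\alpha$, whereas the paper does it in the reverse order.
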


\begin{proof} We have $s = \frac{2n+4}{n+4}$. We want $q_{\alpha}=q_{\beta}$ in Lemma~\ref{lem:global}, and so we can solve in \eqref{eq:a,b,qa,qb} for
\begin{align*}
 q_\alpha = q_\beta = \frac{p(n+2)}{s(p+n+2)}
\end{align*}
corresponding to
\begin{align*}
 \alpha = \frac{1}{q_\alpha} - \frac{s}{p}, \quad \beta = \frac{n}{q_\beta} - \frac{ns}{p}.
\end{align*}
Indeed, we have $\alpha, \beta > 0$ due to $q_\alpha = q_\beta < p/s$ and $q_\alpha = q_\beta > 1$ follows from 
\begin{align*}
 s = \frac{2(n+2)}{{n+4}} < \frac{p(n+2)}{{n+p+2}} = q_\alpha s = q_\beta s,
\end{align*}
since we have $p>2$.
\end{proof}
%%%%%%%%%%%%%%%%%%%%%%%%%%%%%%%%%%%%%%%%%%%%%%%%%%%%%%%%%%%%%%%%%%%%%%%%%%%%%%%%%%%%%%%%%%%%%%%%%%%%%%%%%%%%%%%%%%%%%%%%%%%%%%%%%%%%%%%%%%%%%%%%%%%%%%%%%%%%%%%%%%%%%%%%
\section{Higher integrability of the parabolic differential: Real analysis proof}
\label{sec:proof1}

Let $\Omega = I_0 \times Q_0$ be the ambient parabolic cylinder and $u$ a weak solution to \eqref{eq1} in $\Omega$. Given $\chi \in \C_0^\infty(\Omega)$ we know from Section~\ref{sec:i} that under suitable assumptions on $f$ and $F$ the (localized) parabolic differential
\begin{align*}
 |\nabla(u \chi)| + |\dhalf(u \chi)|
\end{align*}
belongs to $\L^2(\ree)$. In this section, we give a first proof of the following higher integrability result, which lies at the heart of our considerations. Since the Hilbert transform is isometric on $\L^2(\R)$ it does not matter whether or not we include $\HT \dhalf(u \chi)$ here but for $\L^p$-results it seems appropriate to treat both half-order derivatives.

\begin{thm}
\label{thm:reg} 
Suppose $p>2$ is sufficiently close to $2$, depending only on ellipticity and dimensions. If $u$ is a weak solution in $\Omega$ to $\partial_t u - \div A(t,x) \nabla u = f + \div F$, where $f \in \Lloc^{p_*}(\Omega; \IC^m)$ and $F \in \Lloc^{p}(\Omega; \IC^{mn})$, then 
\begin{align*}
 |\nabla (\chi u)| + |\dhalf (\chi u)| + |\HT \dhalf (\chi u)| \in \L^p(\ree)
\end{align*}
for any $\chi \in \C_0^\infty(\Omega)$.
\end{thm}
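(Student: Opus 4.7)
The plan is to verify the hypotheses of Corollary~\ref{cor:rh} for the function
\begin{align*}
 g := |\nabla(\chi u)| + |\dhalf(\chi u)| + |\HT \dhalf(\chi u)|.
\end{align*}
First, I would apply Lemma~\ref{lem:localization} to $v := \chi u$, so that $v$ satisfies $\partial_t v - \div A \nabla v = \widetilde f + \div \widetilde F$ on all of $\ree$ with $\widetilde f, \widetilde F$ given by \eqref{eq:tilde}. Using the hypotheses $f \in \Lloc^{p_*}(\Omega)$ and $F \in \Lloc^p(\Omega)$ together with Theorem~\ref{thm:reg-i} (which provides $u \in \Lloc^{2^*}(\Omega)$) and the standard Caccioppoli inequality \eqref{eq:Cac} (which provides $\nabla u \in \Lloc^2(\Omega)$), a term-by-term inspection of \eqref{eq:tilde} shows that $\widetilde f$ and $\widetilde F$ are compactly supported and belong respectively to $\L^{p_*}(\ree) \subset \L^{2_*}(\ree)$ and $\L^p(\ree) \subset \L^2(\ree)$ (the inclusions using compact support and $p \geq 2$). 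Proposition~\ref{prop:1} then yields $v \in E$, so in particular $g \in \L^2(\ree)$.

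The heart of the argument will be to establish, for every parabolic cylinder $B \subset \ree$, a reverse H\"older inequality with tail of the shape
\begin{align*}
 \bigg(\barint_B g^2\bigg)^{1/2} \le A \cdot a_g(B) + (a_{|\widetilde F|^2}(B))^{1/2} + r(B)\,(a_{|\widetilde f|^{2_*}}(B))^{1/2_*},
\end{align*}
where the constant $A$ depends only on ellipticity and dimensions. Once this is available, Corollary~\ref{cor:rh} applied with $s = 2_*$ immediately delivers $g \in \L^p(\ree)$ provided that $p-2>0$ is small enough (depending only on $A$, hence on ellipticity and dimensions), which is the desired conclusion. Note that the structure $r(B) \cdot (a_{|\widetilde f|^{2_*}}(B))^{1/2_*}$ is exactly the one used in Lemma~\ref{lem:global}.

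To produce the reverse H\"older with tail, I would fix $B$, pick a smooth parabolic cut-off $\eta$ adapted to $B$ (equal to $1$ on $B$, supported in a slight enlargement $\gamma B$, with the natural parabolic scaling of its derivatives), set $w := \eta v \in E$, and exploit the hidden coercivity of Lemma~\ref{lem:2}. Testing the global equation satisfied by $v$ against the modified test function $(1 + \delta \HT) w$ and re-arranging so that $\eta$ is moved through $\cL$ should yield
\begin{align*}
 \|\nabla w\|_2^2 + \delta \|\dhalf w\|_2^2 + \|w\|_2^2 \lesssim \Re\big\langle \widetilde f + \div \widetilde F + (\kappa+1) v,\, (1+\delta \HT) w\big\rangle + \mathcal R,
\end{align*}
where $\mathcal R$ collects the commutators between $\eta$ and the non-local operators $\dhalf$ and $\HT$ that appear because $w = \eta v$ is not directly a test function for the equation satisfied by $v$. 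The contributions of $\widetilde f$ and $\widetilde F$ produce the two data-driven summands of the reverse H\"older by H\"older's inequality; the contribution of $v$ together with $\mathcal R$ is what has to be absorbed into the tail $A \cdot a_g(B)$.

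The main obstacle will be the control of $\mathcal R$. The kernels of $[\dhalf, \eta]$ and $[\HT, \eta]$ decay polynomially in $|t-s|$, so when tested against $v$ they produce time-non-local integrals of $|v|$ weighted by a rapidly decaying factor. Splitting these integrals into dyadic concentric time scales around $B$ and summing the resulting geometric series reproduces exactly the tail functional $a_{|v|}(B)$; a parabolic Sobolev--Poincar\'e inequality applied scale by scale, combined with the Caccioppoli-style normalization by $r(B)^{-1}$, then converts $a_{|v|}(B)$ into $a_g(B)$ with a constant independent of $B$ and of $\chi$. Producing this uniform bound, by adapting the techniques of \cite{AEN} to the present localized setting, is the delicate technical step: the Gehring lemma with tail only tolerates $|p-2|$ small depending on the constant $A$, so $A$ must remain independent of the cylinder $B$ and of the cut-off $\chi$. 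Once this is secured, Corollary~\ref{cor:rh} closes the argument.
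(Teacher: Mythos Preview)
Your overall strategy matches the paper's first proof: localize to $v = \chi u$, establish a reverse H\"older inequality with tail for $g$ (this is the paper's Lemma~\ref{lem:rhrws}), and close with Corollary~\ref{cor:rh}. However, your sketch of the reverse H\"older step has two concrete gaps.

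First, you set $w = \eta v$ rather than $w = \eta(v-c)$ with $c = \bariint_{I\times 2Q} v$. When $\kappa > 0$ in the G\aa rding inequality, the coercivity estimate leaves a term $\kappa\|w\|_2^2 \sim \iint_{4I\times 2Q}|v|^2$ on the right-hand side, and this quantity is \emph{not} controlled by $a_g(B)$ because $g$ carries no information about the mean of $v$. Subtracting $c$ is essential; then $\|\eta(v-c)\|_2$ is handled via Proposition~\ref{ref:basic} followed by \eqref{eq:sigma}. Second, your claim that ``a parabolic Sobolev--Poincar\'e inequality applied scale by scale converts $a_{|v|}(B)$ into $a_g(B)$'' misidentifies the mechanism. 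Even at the base scale, the bound $\iint_{16I\times 4Q}|v-c| \lesssim \sum(g)$ in \eqref{eq:sigma} already requires the \emph{non-local} fractional Poincar\'e inequality of Lemma~\ref{lem:fracPoin}, which itself produces a tail in $|\dhalf v|$; there is no purely local Poincar\'e step here. Moreover, after the local piece $\eta(v-c)$ is dealt with (essentially as you propose, via Proposition~\ref{prop:1}), the error terms $\dhalf((1-\eta_I)(v-c))$ and $\HT\dhalf((1-\eta_I)(v-c))$ on $I\times Q$ require the specific kernel lemmas (Lemma~\ref{lem:cotlar} and its companion), with an asymmetry your commutator heuristic does not capture: $\HT\dhalf$ must be treated first, exploiting the odd kernel of $\HT$ and the symmetry of $\eta_I$, and its bound is then reused to close the estimate for $\dhalf$.

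It is also worth noting that the paper gives a second, much shorter operator-theoretic proof in Section~\ref{sec:proof2}: the parabolic operator $\cL: E_2 \to E_2^*$ of Lemma~\ref{lem:2} extends boundedly to $E_p \to (E_{p'})^*$, and {\v{S}}ne{\u\ii}berg's interpolation theorem propagates invertibility from $p=2$ to nearby $p$, yielding $v \in E_p$ directly without any reverse H\"older inequality.
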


In this section we want to give a proof using the Gehring lemma with tail. The required non-local reverse H\"older estimate is provided by the following key lemma. Naturally its proof is somewhat technical and will be postponed until the end of this section. It can be skipped on a first reading.

\begin{lem}\label{lem:rhrws} Let $\widetilde f\in \L^{2_*}(\ree)$ and $\widetilde F\in \L^2(\ree)$. Let $v\in V$ be a weak solution to $\partial_t v -\div A(t,x)\nabla v  =  \widetilde f+ \div \widetilde  F$ in $\ree$. Let $\gamma>1$ and let $I \times Q$ be a parabolic cylinder with $\ell(I) \sim r(Q)^2$. Then
\begin{align*}
 g :=  |\nabla v | + |\dhalf v| + |\HT\dhalf v|.
\end{align*}
satisfies
\begin{align}
\label{eq:rhrws}
\begin{split}
\bigg(\bariint_{I \times Q} g^2  \bigg)^{1/2} & 
\lesssim \sum_{k \in \IZ} \frac{1}{1+|k|^{3/2}} \left(\bariint_{I_k \times \gamma Q} g 
+ \bigg(\bariint_{I_k \times \gamma Q} |\widetilde F|^2\bigg)^{1/2} 
+ r(Q) \bigg(\bariint_{I_k \times \gamma Q} |\widetilde f|^{2_*}\bigg)^{1/2_*} \right).
\end{split}
\end{align}
Here,   $I_k := k \ell(I) + I$ are the disjoint translates of $I$ covering the real line up to a countable set. The implicit constant depends only on ellipticity, dimensions, $\gamma$ and the constants controlling the ratio $r(Q)^2/\ell(I)$. 
\end{lem}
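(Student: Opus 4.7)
My strategy is to split each half-derivative appearing in $g$ on $I\times Q$ into a ``local'' part controlled by the global a priori estimate of Proposition~\ref{prop:1} and a ``tail'' handled by the explicit integral representation of $\dhalf$; the $\nabla v$ contribution is controlled by the Caccioppoli inequality of Proposition~\ref{ref:cac}.

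I would first pick a product cutoff $\chi(t,x)=\chi_1(t)\chi_2(x)$ with $\chi_1\in \C_0^\infty(\R)$ equal to $1$ on $2I$ and supported in $3I$, and $\chi_2\in \C_0^\infty(\R^n)$ equal to $1$ on $Q$ and supported in $\gamma Q$, and set $w:=v\chi$. By Lemma~\ref{lem:localization}, $w\in V$ is a global weak solution with modified data $\widetilde f',\widetilde F'$ supported in $3I\times\gamma Q$. Since $\dhalf$ acts in time only and $\chi_2\equiv 1$ on $Q$, on $I\times Q$ one has $\dhalf w=\dhalf(v\chi_1)$, and hence the decomposition
\begin{equation*}
\dhalf v=\dhalf w+\dhalf\bigl(v(1-\chi_1)\bigr),
\end{equation*}
with the analogous identity for $\HT\dhalf v$. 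Proposition~\ref{prop:1} applied to $w$ delivers
\begin{equation*}
\|\dhalf w\|_{2}+\|\HT\dhalf w\|_{2}\lesssim \|w\|_V+\|\widetilde f'\|_{2_*}+\|\widetilde F'\|_{2},
\end{equation*}
and after unwinding the definitions of $\widetilde f',\widetilde F'$ and a further application of Caccioppoli to the $\nabla v$ terms, the right-hand side reduces to $\L^2$-averages of $v,\widetilde f,\widetilde F$ on $3I\times\gamma Q$.

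For the tail, the pointwise formula $\dhalf f(t)=c\int|t-s|^{-3/2}(f(t)-f(s))\,\d s$ together with $(1-\chi_1)=0$ on $2I$ yields, for $(t,x)\in I\times Q$,
\begin{equation*}
\bigl|\dhalf\bigl(v(1-\chi_1)\bigr)(t,x)\bigr|\lesssim \frac{1}{r(Q)}\sum_{|k|\ge 2}\frac{1}{|k|^{3/2}}\barint_{I_k}|v(s,x)|\,\d s,
\end{equation*}
since $|t-s|\sim|k|\ell(I)$ and $\ell(I)\sim r(Q)^2$ for $s\in I_k$ with $|k|\ge 2$. The same bound holds for $\HT\dhalf(v(1-\chi_1))$. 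Taking $\L^2$-averages on $I\times Q$, Minkowski's inequality and Cauchy--Schwarz in $s$ then produce the desired weighted sum of $\L^2$-averages of $|v|$ on the slabs $I_k\times Q$.

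Finally, to pass from $r(Q)^{-1}\L^2$-averages of $|v|$ to $\L^1$-averages of $g$, I would (i) replace $v$ by $v-c$ for a suitable constant, using that this changes the non-local part only by $c\,\dhalf(1-\chi_1)$, which is pointwise controlled by $r(Q)^{-1}$; (ii) apply a parabolic Sobolev--Poincar\'e inequality obtained from Lemma~\ref{lem:1} with $p=2_*$ (so that $p^{*}=2$) to a localized $v-c$ on each slab, converting $r(Q)^{-1}\|v-c\|_{\L^2(I_k\times Q)}$ into $\L^{2_*}$-averages of $g$ on $I_k\times\gamma Q$; and (iii) invoke the self-improvement of reverse H\"older inequalities (the parabolic analogue of Theorem~B.1 of \cite{BCF}, cited in the proof of Proposition~\ref{ref:basic}) to downgrade $\L^{2_*}$-averages of $g$ to $\L^1$-averages, which yields \eqref{eq:rhrws}. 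The main obstacle is the coordination of the constants across different slabs: no single shift is optimal for every $k$, so a telescoping of the natural slab averages $c_k:=\bariint_{I_k\times\gamma Q}v$ must be estimated by $\L^1$-averages of $g$ on intermediate slabs in a way that preserves the summable weight $|k|^{-3/2}$, in the spirit of the techniques of \cite{AEN} and \cite{KMS} referenced in the introduction.
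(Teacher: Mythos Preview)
Your architecture---localize, treat the local part via Proposition~\ref{prop:1}, treat the tail via the kernel of $\dhalf$---matches the paper's. The gap is in the endgame, where you propose to convert $r(Q)^{-1}$ times $\L^2$-averages of $|v|$ (or $|v-c|$) on the slabs $I_k\times Q$ into $\L^1$-averages of $g$. Step~(ii) does not work as stated: Lemma~\ref{lem:1} applies to globally compactly supported functions, and localizing $v-c$ to $I_k\times\gamma Q$ by a cutoff $\psi_k$ produces a term $(v-c)\nabla\psi_k$ of size $r(Q)^{-1}|v-c|$ (so you are back where you started) and a commutator $[\dhalf,\psi_k](v-c)$ which is itself non-local in time and not controlled by $g$ restricted to $I_k\times\gamma Q$. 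In fact any Poincar\'e-type inequality for $\dhalf$ on a single interval necessarily carries a tail over all translates (this is exactly Lemma~\ref{lem:fracPoin}), so a purely slab-local conversion is not available. Step~(iii) is circular: the self-improvement results you cite apply \emph{after} a reverse H\"older inequality has been established, and moreover they are stated for local reverse H\"older inequalities, not ones with a tail.

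The paper avoids this conversion problem by never producing averages of $|v|$ that must later be traded for averages of $g$. Instead of your decomposition, it subtracts the constant $c=\bariint_{I\times 2Q} v$ from the outset and splits $v-c=(v-\barint_{2Q}v)+(\barint_{2Q}v-c)$ into a purely spatial oscillation and a purely temporal oscillation of the spatial mean. The spatial part is handled by the ordinary Poincar\'e inequality (producing $|\nabla v|$), and the temporal part by the fractional Poincar\'e inequality of Lemma~\ref{lem:fracPoin} (producing $|\dhalf v|$ and $|\HT\dhalf v|$ directly, with the summable weight $|k|^{-3/2}$ already in place). The error terms coming from $(1-\eta_I)$ are then treated by two further lemmas from \cite{AEN} (quoted in the paper as Lemmas~\ref{lem:cotlar} and the following one), which again output averages of $\dhalf v$, $\HT\dhalf v$, or $\nabla v$ rather than of $v$; for the $\HT\dhalf$ piece the cutoff $\eta_I$ must be chosen symmetric about the midpoint of $I$ to exploit the oddness of the Hilbert kernel. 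The resulting double sums are collapsed via the elementary convolution bound $\sum_j (1+|j|^{3/2})^{-1}(1+|k-j|^{3/2})^{-1}\lesssim (1+|k|^{3/2})^{-1}$. This is the mechanism that replaces your steps (i)--(iii).
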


For the moment, let us admit the lemma and record its consequences.

\begin{cor}\label{cor:main} Let $\widetilde f \in \L^{2_*}(\ree)$, $\widetilde F\in \L^2(\ree)$ and let $v\in V$ be a weak solution to $\partial_t v -\div A(t,x)\nabla v  =  \widetilde f+ \div \widetilde  F$  in $\ree$. Let $g$ be as in Lemma~\ref{lem:rhrws}. If $p>2$ is sufficiently close to $2$, then
$$
\| g\|_{\L^p(\ree)}  \lesssim   \|  \widetilde F\|_{\L^p(\ree)} +  \|  \widetilde f\|_{\L^{p_{*}}(\ree)}.
$$
The implicit constant as well as $p$ depends only on ellipticity and dimensions.
\end{cor}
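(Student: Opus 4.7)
The plan is to deduce the estimate directly from Lemma~\ref{lem:rhrws} and Corollary~\ref{cor:rh}. All of the hard analysis has already been done; what remains is to reshape the right-hand side of \eqref{eq:rhrws} into the form required by the Gehring lemma with tail, namely
\[
\bigg(\bariint_{B} g^{2}\bigg)^{1/2} \le A\, a_g(B) + (a_{|\widetilde F|^2}(B))^{1/2} + r(B)\,(a_{|\widetilde f|^{2_*}}(B))^{1/2_*},
\]
for all parabolic cylinders $B = I\times Q$.

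The conversion of the tail sum into a dyadic average is purely arithmetic. For each $j\ge 0$ the translates $I_k = k\ell(I) + I$ with $|k|\lesssim 4^j$ lie inside $C\cdot 4^j I$, there are $\sim 4^j$ of them, and the weight $\frac{1}{1+|k|^{3/2}}$ is $\lesssim 4^{-3j/2}$ on the shell $|k|\sim 4^j$. Combining these observations and absorbing the constant $C$ into a slightly larger outer dilation, one arrives at
\[
\sum_{k \in \IZ} \frac{1}{1+|k|^{3/2}}\, \bariint_{I_k \times \gamma Q} g
\;\lesssim\; \sum_{j=0}^{\infty} 2^{-j}\, \bariint_{4^j I \times \gamma' Q} g
\;\lesssim\; a_g(B).
\]
For the terms involving $|\widetilde F|^2$ and $|\widetilde f|^{2_*}$, the outer exponents $1/2$ and $1/2_*$ are pulled out of the $k$-sum by Cauchy--Schwarz and by Hölder with exponent $2_*$ against the summable sequence $\frac{1}{1+|k|^{3/2}}$; the same dyadic reorganization then yields $(a_{|\widetilde F|^2}(B))^{1/2}$ and $r(Q)(a_{|\widetilde f|^{2_*}}(B))^{1/2_*}$, with constants depending only on ellipticity, dimensions, and the fixed $\gamma$.

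At this point the hypotheses of Corollary~\ref{cor:rh} are met: the reverse Hölder inequality just derived holds on every parabolic cylinder $B$ with a constant $A\ge 1$ depending only on ellipticity and dimensions, and the required global integrability $g^2,\, |\widetilde F|^2,\, |\widetilde f|^{2_*}\in \L^1(\ree)$ follows from Proposition~\ref{prop:1} applied to $v$ together with the hypotheses on $\widetilde f$ and $\widetilde F$. Specializing to $s = 2_*$, $f = |\widetilde F|$ and $h = |\widetilde f|$, Corollary~\ref{cor:rh} immediately produces
\[
\|g\|_{\L^p(\ree)} \;\lesssim\; \|\widetilde F\|_{\L^p(\ree)} + \|\widetilde f\|_{\L^{p_*}(\ree)}
\]
for every $p>2$ sufficiently close to $2$. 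The smallness threshold depends on $A$ and dimensions, hence, after fixing $\gamma$, only on ellipticity and dimensions.

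I do not foresee a substantive obstacle; the lemmas of Sections~\ref{sec:Gehring} and \ref{sec:proof1} do all the work. The one delicate point is the Hölder step for the $\widetilde f$ tail, where the outer exponent $1/2_* < 1$ and the accompanying factor $r(Q)$ must be bookkept carefully so that the resulting bound lands exactly in the scaling required by Lemma~\ref{lem:global}.
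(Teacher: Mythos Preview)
Your proposal is correct and follows essentially the same route as the paper: convert the translate sum $\sum_k (1+|k|^{3/2})^{-1}\barint_{I_k}$ into the dyadic tail $\sum_j 2^{-j}\barint_{4^jI}$, use H\"older against the summable weights to pull the exponents $1/2$ and $1/2_*$ outside the sum, and then invoke Corollary~\ref{cor:rh} with $s=2_*$. The paper states the translate-to-dyadic equivalence as a two-sided comparability and mentions H\"older's inequality in one line, but the substance is identical to what you wrote.
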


\begin{proof}
Rearranging unions of translates of an interval $I$ into unions of its dilates, and vice versa, reveals that for any positive function $h$ on the real line we have 
$$
\sum_{k \in \IZ} \frac{1}{1+|k|^{3/2}} \barint_{  I_k} h \sim \sum_{j = 0}^\infty 2^{-j} \barint_{  4^{j}I}h,
$$
with absolute implicit constants. Lemma~\ref{lem:rhrws} together with this observation and H\"older's inequality yields
\begin{align*}
\bigg(\bariint_{I \times Q} g^2  \bigg)^{1/2} 
\lesssim \sum_{j=0}^\infty 2^{-j} \bariint_{4^j I \times Q} g 
 + \bigg(\sum_{j =0}^\infty 2^{-j} \bariint_{4^j I \times Q} |\widetilde F|^2\bigg)^{1/2} 
 + r \bigg(\sum_{j=0}^\infty 2^{-j} \bariint_{4^j I \times Q} |\widetilde f|^{2_*}\bigg)^{1/q'}.
 \nonumber
\end{align*}
Thus, we have the setup of Lemma~\ref{lem:global} with $s = 2_*$ and we conclude by Corollary~\ref{cor:rh}. 
\end{proof}

Theorem~\ref{thm:reg} is obtained through a by now well-known localization procedure.

\begin{proof}[Proof of Theorem~\ref{thm:reg}]
The function $v:= u\chi \in V$ is a weak solution to $\partial_t v -\div A(t,x)\nabla v = \widetilde f+\div \widetilde F$ on $\ree$ with the relations \eqref{eq:tilde}. We can \emph{a priori} assume $2<p \leq 2^*$ and hence have $v \in \L^p(\ree)$ from Theorem~\ref{prop:1}. Then we have $\Lloc^2(\ree) \subseteq \Lloc^{p_*}(\ree)$ since $p_* \leq 2$. This being said, $\widetilde f \in \L^{p_{*}}(\ree)$, $\widetilde F\in \L^p(\ree)$ follows from \eqref{eq:tilde} and the hypotheses on $f$, $F$. Hence, Corollary~\ref{cor:main} applies and the claim follows.
\end{proof}

We turn to the proof of Lemma~\ref{lem:rhrws}. We follow the argument presented in Section~8 of \cite{AEN} for $f=0$ and $F=0$. We omit duplicated arguments but give all other details so that the reader does not have to work through any other section of \cite{AEN}. In this reference, {the order of variables was $(x,t)$ and} an additional spatial dimension was carried through the argument, both for the purpose of treating boundary value problems. The latter plays no role here and can be ignored. Next, $u$ in \cite{AEN} has become $v$ here and the extra property $\dhalf v\in \L^2(\ree)$ provided by Proposition~\ref{prop:1} means that $v$ is a reinforced weak solution in the terminology there.

\begin{proof}[Proof of Lemma~\ref{lem:rhrws}] We remark that $g\in \L^2(\ree)$ due to Proposition~\ref{prop:1} and the fact that $\HT$ is isometric on $\L^2(\ree)$. It suffices to prove the claim for $\gamma=8$ since \emph{a posteriori} a covering argument, which we leave to the reader, gives us the inequality with any $\gamma>1$.

For simplicity, we are also going to assume $r(Q) \sim 1$ and that $I \times Q$ is centered at $(0,0)$ as scaling and translating give us back the general estimate. Having normalized to scale $1$, averages are integrals (up to numerical constants). For the time being it will be enough to work with $\gamma=4$, so that the parabolic enlargement is $16I \times 4Q$. We fix a smooth cut-off $\eta: \ree \to [0,1]$ with support in $4I \times 2Q$ that is $1$ on an enlargement $\frac{9}{4}I \times \frac{3}{2} Q$. For a reason which will become clear later on, we choose $\eta$ to have the product form
\begin{align*}
 \eta(s,y) = \eta_I(s) \eta_Q(y) ,
\end{align*}
where $\eta_I$ is symmetric about $0$ (the midpoint of $I$). We also give a name to the translation sums
\begin{align*}
 \sum(h) := \sum_{k \in \IZ} \frac{1}{1+|k|^{3/2}} \iint_{I_{k} \times 4Q} |h|.
\end{align*}
  
\medskip

\noindent \textbf{Step 1: The spatial average.} The estimate \eqref{eq:Cac1} with $v-c$ and  $c:= \bariint_{I \times 2Q} v$ yields
\begin{align*}
    \left(\iint_{4I\times 2Q} |\nabla v|^2  \right)^{1/2} \lesssim \iint_{16I\times 4Q} |v-c| + \left(\iint_{16I\times 4Q} |\widetilde F|^2\right)^{1/2} +  \left(\iint_{16I\times 4Q} |\widetilde f|^{2_*}\right)^{1/2_*}.
\end{align*}
Now, we write
\begin{align*}
 v-c = \bigg(v - \barint_{2Q} v \; \d x\bigg) + \bigg(\barint_{2Q} v \; \d x - \bariint_{I \times 2Q} v \; \d x \, \d t\bigg)
\end{align*}
and apply the $\L^1$-Poincar\'e's inequality to the first term and treat the second term (which does not depend on $x$) via the fractional Poincar\'e inequality in the following lemma with $p=q=1$. Details are written out in the proof of Lemma~8.4 in \cite{AEN}.

\begin{lem}[{\cite[Lem.~8.3]{AEN}}]
\label{lem:fracPoin}
Let $p,q \in [1,\infty)$ satisfy $p/2 < q \leq p$. Then for each interval $J \subset \R$ and every $h \in \H^{1/2}(\R)$,
\begin{align*}
 \bigg(\barint_J \Big|h - \barint_J h \Big|^p \d s \bigg)^{1/p} \lesssim \sqrt{\ell(J)} \bigg(\sum_{k \in \IZ} \frac{1}{1+|k|^{3/2}} \barint_{J_k} |\dhalf h|^q \d s \bigg)^{1/q}.
\end{align*}
The analogous inequality with $\HT \dhalf h$ instead of $\dhalf h$ on the right-hand side also holds.
\end{lem}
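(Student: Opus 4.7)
The plan is to represent $h - \barint_J h$ as a Riesz potential of $\dhalf h$ and split the resulting integral into a local piece and a tail. By density it suffices to treat $h \in \C^\infty_0(\R)$. The symbol identity $|\tau|^{-1/2} \cdot |\tau|^{1/2} = 1$ combined with the fact that the Riesz potential $I^{1/2}$ of order $1/2$ in dimension one is convolution with $c|s|^{-1/2}$ yields the pointwise representation
\begin{equation*}
h(s) - h(s_0) = c \int_\R \bigl( |s-\tau|^{-1/2} - |s_0-\tau|^{-1/2} \bigr) \dhalf h(\tau) \, d\tau =: c \int_\R K(s, s_0, \tau) \dhalf h(\tau) \, d\tau,
\end{equation*}
and averaging over $s_0 \in J$ expresses $h(s) - \barint_J h$ as the same integral with $K$ replaced by $\barint_J K(s, s_0, \tau) \, ds_0$. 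I would split $\R$ into $3J$ (covered by $J_{-1} \cup J_0 \cup J_1$) and its complement $\bigcup_{|k| \geq 2} J_k$.

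For $\tau \in J_k$ with $|k| \geq 2$ and $s, s_0 \in J$, a direct mean value estimate gives $|K(s, s_0, \tau)| \lesssim \ell(J) \cdot (|k|\ell(J))^{-3/2}$, so the tail contribution to $|h(s) - \barint_J h|$ is dominated uniformly in $s$ by $\ell(J)^{1/2} \sum_{|k| \geq 2} |k|^{-3/2} \barint_{J_k} |\dhalf h|$. Jensen inside each average followed by Hölder on the sum against the summable weight $w_k := (1+|k|)^{-3/2}$ bounds this by $C \ell(J)^{1/2} \bigl(\sum_k w_k \barint_{J_k} |\dhalf h|^q\bigr)^{1/q}$, which matches the claimed right-hand side. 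No constraint on $p,q$ beyond $q \geq 1$ enters at this step.

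For $\tau \in 3J$, I use $|K| \leq |s-\tau|^{-1/2} + |s_0 - \tau|^{-1/2}$ to control the local contribution by $I^{1/2}(|\dhalf h| \mathbf{1}_{3J})(s)$ plus its $J$-average. Young's inequality $L^q \ast L^s \to L^p$ with $1/s = 1 - 1/q + 1/p$, together with $\bigl\||\cdot|^{-1/2} \mathbf{1}_{|\cdot| \leq \ell(J)}\bigr\|_{L^s} \sim \ell(J)^{1/s - 1/2}$ (finite precisely when $s < 2$), gives $\|I^{1/2}(|\dhalf h|\mathbf{1}_{3J})\|_{L^p(J)} \lesssim \ell(J)^{1/p - 1/q + 1/2} \|\dhalf h\|_{L^q(3J)}$. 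Here the hypothesis $q \leq p$ is exactly what guarantees $s \geq 1$, while $s < 2$ is equivalent to $1/q - 1/p < 1/2$, which is implied by $p/2 < q$. Passing back to averages shows the local contribution matches the $k \in \{-1, 0, 1\}$ terms of the weighted tail sum.

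For the variant with $\HT \dhalf h$, I use $\HT^2 = -\mathrm{Id}$ to write $\dhalf h = -\HT (\HT \dhalf h)$, giving the analogous representation with kernel $\sgn(s-\tau)|s-\tau|^{-1/2}$, which has the same modulus as $|s-\tau|^{-1/2}$. Every pointwise bound above then carries through verbatim. The main obstacle is organizing the tail/local split so that the mean value estimate on $K$ is valid up to the boundary of $J$ (hence the choice to keep $J_{\pm 1}$ on the ``local'' side) and carefully bookkeeping the $\ell(J)$-powers so that the local bound matches the small-$k$ terms of the weighted sum; scaling invariance provides a convenient consistency check.
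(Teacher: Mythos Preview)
The paper does not give its own proof of this lemma; it is quoted verbatim from \cite[Lem.~8.3]{AEN} and used as a black box. So there is nothing in the present paper to compare your argument against.

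That said, your argument is correct and is essentially the natural proof. A few remarks on the details you singled out as potential obstacles:
\begin{itemize}
\item The representation $h = c\,|\cdot|^{-1/2} * \dhalf h$ is legitimate for $h \in \C_0^\infty(\R)$: one checks that $\dhalf h$ is bounded and decays like $|s|^{-3/2}$, so the convolution converges absolutely pointwise. Passing to general $h \in \H^{1/2}(\R)$ by density works at least when $q \le 2$ (convergence of the right-hand side follows from $\L^2$-convergence of $\dhalf h_\eps$ on each $J_k$); for $q>2$ the right-hand side may be infinite and the inequality is vacuous.
\item Your check that $p/2 < q$ forces $s<2$ in Young's inequality is correct for all $p,q \ge 1$ (not just $p\ge 2$): from $p < 2q$ one gets $\tfrac1q - \tfrac1p < \tfrac2p - \tfrac1p = \tfrac1p$ when $p \ge 2$, while for $p<2$ the constraint $q\ge 1$ already gives $\tfrac1q - \tfrac1p \le 1 - \tfrac1p < \tfrac12$. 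The borderline $s=2$ is excluded precisely by the strict inequality $p/2<q$.
\item Keeping $J_{\pm 1}$ on the local side is indeed necessary for the mean-value bound $|K(s,s_0,\tau)| \lesssim \ell(J)\,(|k|\ell(J))^{-3/2}$ to hold uniformly for $s,s_0 \in J$.
\item For the $\HT\dhalf$ variant, the kernel of $I^{1/2}\HT$ is a constant multiple of $\sgn(s)|s|^{-1/2}$, which has the same modulus and the same mean-value behaviour as $|s|^{-1/2}$; your reduction via $\HT^2 = -\mathrm{Id}$ is correct.
\end{itemize}
The minor discrepancy between your weight $(1+|k|)^{-3/2}$ and the paper's $(1+|k|^{3/2})^{-1}$ is harmless, as the two are comparable.
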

The resulting estimate is 
\begin{equation}
\label{eq:sigma}
\iint_{16I\times 4Q} |v-c| \lesssim  \sum(g).
\end{equation}
Thus, we obtain a bound of the required type
\begin{align}
\label{eq:RHS}
\begin{split}
    \left(\iint_{4I\times 2Q} |\nabla v|^2  \right)^{1/2} 
    &\lesssim \sum(g) + \left(\iint_{16I\times 4Q} |\widetilde F|^2\right)^{1/2} +  \left(\iint_{16I\times 4Q} |\widetilde f|^{2_*}\right)^{1/2_*} \\
    &\lesssim \sum(g) + \sum_{|k| \leq 16} \frac{1}{1+|k|^{3/2}} \left(\bigg(\iint_{I_k \times 4Q} |\widetilde F|^2\bigg)^{1/2} + \bigg(\iint_{I_k \times 4Q} |\widetilde f|^{2_*}\bigg)^{1/2_*}\right).
\end{split}
\end{align}
It remains to obtain akin bounds for the $\L^2$ averages of $\HT\dhalf v$ and $\dhalf v$. As the fractional derivatives annihilate constants, we may replace $v$ by $v-c$ and write $v-c=\eta(v-c)+ (1-\eta)(v-c)$. 

\medskip

\noindent \textbf{Step 2: Local terms.} For the local term $w:=\eta(v-c)$ we have   
\begin{align*}
\iint_{I \times Q} |\HT\dhalf w|^2+  |\dhalf w|^2 \leq \iint_{\ree} |\HT\dhalf w|^2+  |\dhalf w|^2= 2 \iint_{\ree} 
|\dhalf w|^2,  
\end{align*}
using that $\HT$ is isometric on $\L^2(\ree)$. Since $w$ solves an equation $\pd_{t}w- \div A \nabla w = f'+\div  F'$, Proposition~\ref{prop:1} implies
\begin{align*}
\|\dhalf w\|_{2}\le \|w\|_{V}+ \| f'\|_{2_*}+ \| F'\|_{2},
\end{align*}
where
\begin{align*}
\label{}
    f' &  := \eta \widetilde  f + \pd_{t}\eta (v-c) - A\nabla v \cdot \nabla \eta - \widetilde F\cdot \nabla \eta,   \\
    F' &  :=  -A ((v-c) \nabla \eta)  + \widetilde F\eta,
\end{align*}
as we can read off from \eqref{eq:tilde}. Using the formul\ae{} for $f', F'$ and H\"older's inequality to bound $\L^{2_*}$ averages by $\L^2$ averages whenever convenient, we arrive at
\begin{align*}
\|\dhalf w\|_{2}  \lesssim \left( \iint_{4I\times 2Q}|v-c|^{2}\right)^{1/2} + \left( \iint_{4I\times 2Q}|\nabla v|^2\right)^{1/2} + \left( \iint_{4I\times 2Q}|\widetilde F|^2\right)^{1/2} + \left( \iint_{4I\times 2Q}|\widetilde f|^{2_*}\right)^{1/2_*}.
\end{align*}
%For the first two terms, we use \eqref{eq:Cac1} and \eqref{eq:rhu}, respectively, and then \eqref{eq:sigma}. 
For the first term on the right we use \eqref{eq:rhu} and then \eqref{eq:sigma}. For the second one we use \eqref{eq:RHS}.
This leads to
\begin{align*}
    \|\dhalf w\|_{2}  \lesssim \sum(g) + \left(\iint_{16I \times 4Q} |\widetilde F|^2\right)^{1/2} +  \left(\iint_{16I\times 4Q} |\widetilde f|^{2_*}\right)^{1/2_*}
\end{align*}
and decomposing $16I$ into translates of $I$ as before gives an estimate of the required type.

\medskip

\noindent \textbf{Step 3: First error term.} We come to the delicate steps in \cite{AEN}. The non-locality of the operators $\dhalf$ and $\HT \dhalf$ cannot be circumvented anymore. As $\eta_{Q}=1$ on $Q$, we have 
\begin{align*}
 \dhalf \big((1-\eta)(v-c)\big)=\dhalf \big((1-\eta_{I})(v-c)\big)
\end{align*}
on $I \times Q$. The same observation applies to $\HT \dhalf$ in lieu of $\dhalf$. We split as in Step~1,
\begin{align*}
 v-c=v-\barint_{2Q} v + \barint_{2Q}v- \bariint_{I \times 2Q} v.
\end{align*}
For the terms involving $w_{1}:=(1-\eta_{I})(v-\barint_{2Q} v)$ we can use a kernel representation for $\dhalf$ and then the fractional Poincar\'e inequality of Lemma~\ref{lem:fracPoin}. This is (the proof of) Lemma~8.6 in \cite{AEN}. As a result,
\begin{align*}
\left( \iint_{I \times Q} |\HT\dhalf w_{1}|^2+  |\dhalf w_{1}|^2 \right)^{1/2} &\lesssim \sum_{j\in \IZ}  \frac{1}{1+|j|^{3/2}} \bigg(\iint_{4I_j\times 2Q} |\nabla v|^2\bigg)^{1/2}. 
\end{align*} 
Inserting \eqref{eq:RHS} for each $4I_j\times 2Q$ instead of $4I \times 2Q$ and using the convolution inequality 
\begin{align*}
 \sum_{j \in \IZ} \frac{1}{1+|j|^{3/2}}\frac{1}{1+|k-j|^{3/2}} \lesssim \frac{1}{1+|k|^{3/2}}, \qquad k \in \IZ,
\end{align*}
we obtain the desired bound by
\begin{align*}
  \sum(g)+  \sum_{k\in \IZ}  \frac{1}{1+|k|^{3/2}} \bigg(\iint_{I_k \times 4Q} |\widetilde F|^2\bigg)^{1/2} +  \sum_{k\in \IZ}  \frac{1}{1+|k|^{3/2}} \bigg(\iint_{I_k \times 4Q} |\widetilde f|^{2_*}\bigg)^{1/2_*}.
\end{align*}

\medskip

\noindent \textbf{Step 4: Second error term.} The remaining average of $|\HT\dhalf w_{2}|^2+  |\dhalf w_{2}|^2$, where 
\begin{align*}
 w_{2}:=(1-\eta_{I})\bigg(\barint_{2Q}v- \bariint_{I \times 2Q} v \bigg) =: (1-\eta_{I})\bigg(h - \barint_I h\bigg), \quad h = \barint_{2Q}v,
\end{align*}
is treated independently of knowing that $v$ is a solution. Indeed, since $v \in \H^{1/2}(\R; \L^2(\R^n))$ we have $h, (1-\eta_I)h \in \H^{1/2}(\R)$ and we had chosen $\eta_{I}$ in such a way that the following lemma applies. We note that in its proof the symmetry of $\eta_{I}$ is used to control the Hilbert transform, which has an odd kernel.

\begin{lem}[{\cite[Lem.~8.7]{AEN}}]
\label{lem:cotlar}
Let $I$ be a bounded interval and $\eta_I: \R \to [0,1]$ be a smooth cut-off function with support in $4I$ that is identically $1$ on $\frac{9}{4} I$. Suppose furthermore that $\eta_I$ is symmetric about the midpoint of $I$. If $h, (1-\eta_I)h  \in \H^{1/2}(\R)$, then almost everywhere on $I$,
\begin{align}
\label{eq:cotlar}
 \bigg|\HT \dhalf \bigg((1-\eta)\Big(h - \barint_I h \Big)\bigg)\bigg|  \lesssim \sum_{k \in \IZ} \frac{1}{1+|k|^{3/2}} \barint_{I_k} (|\dhalf h| + |\HT \dhalf h| ).
\end{align}
\end{lem}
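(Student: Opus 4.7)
The idea is that $\phi := (1-\eta_I)(h - \barint_I h)$ vanishes identically on $\tfrac{9}{4}I \supset I$, so every $t \in I$ sits at distance $\gtrsim \ell(I)$ from $\supp \phi$. Combining the factorization $\HT \dhalf = \partial_t \ihalf$ with the kernel $c_0 |t-s|^{-1/2}$ of $\ihalf$ and differentiating under the integral sign, which is justified by this separation, produces the pointwise representation
\begin{equation*}
\HT \dhalf \phi(t) \;=\; c \int_{\R} \frac{\sgn(t-s)}{|t-s|^{3/2}} \, (1-\eta_I(s)) \Big(h(s) - \barint_I h\Big) \, ds, \qquad t \in I,
\end{equation*}
with an absolutely convergent integrand. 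The task is then to dominate this integral pointwise by the right-hand side of \eqref{eq:cotlar}.

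\textbf{Symmetry-based reduction.} Since $h$ need not decay at infinity, a straight absolute-value estimate on the integrand is only borderline convergent, and one must use cancellation. Here the symmetry hypothesis on $\eta_I$ about the midpoint $t_0$ of $I$ enters decisively. In variables $\sigma = s - t_0$, $\tau = t - t_0$, the factor $(1-\eta_I)$ is even in $\sigma$ while the kernel at $\tau = 0$, namely $\sgn(-\sigma)|\sigma|^{-3/2}$, is odd in $\sigma$. Symmetrizing the integral with respect to the reflection $s \mapsto 2t_0 - s$ at $t = t_0$ yields
\begin{equation*}
\int K(t_0, s)(1-\eta_I(s))(h(s) - \barint_I h) \, ds \;=\; \tfrac{1}{2} \int K(t_0, s)(1-\eta_I(s))\bigl(h(s) - h(2t_0 - s)\bigr) \, ds,
\end{equation*}
so that the constant $\barint_I h$ drops out. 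For $\tau \neq 0$, the decomposition of $K(t_0+\tau,\blank)$ into its even and odd parts under $\sigma \mapsto -\sigma$, combined with a first-order Taylor expansion in $\tau$, shows the even part contributes an extra multiplicative factor of order $|\tau|/|\sigma| \leq 1$, which is harmless. Thus the effective integrand is $K(t,s)(1-\eta_I(s))(h(s) - h(2t_0 - s))$ up to a quantitatively smaller remainder.

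\textbf{Dyadic reduction to averages.} I would then split $\R = \bigsqcup_{k \in \IZ} I_k$. On $I_k$ the kernel has size $\sim (1+|k|)^{-3/2} \ell(I)^{-3/2}$, so $I_k$'s contribution is dominated by
\begin{equation*}
\frac{1}{(1+|k|)^{3/2} \sqrt{\ell(I)}} \, \barint_{I_k} |h(s) - h(2 t_0 - s)| \, ds.
\end{equation*}
The identity $h(s) - h(2 t_0 - s) = (h(s) - \barint_{I_k} h) - (h(2 t_0 - s) - \barint_{I_{-k}} h) + (\barint_{I_k} h - \barint_{I_{-k}} h)$ reduces matters to two kinds of terms: (a) the two intrinsic averages $\barint_{I_{\pm k}} |h - \barint_{I_{\pm k}} h|$, which are handled directly by Lemma~\ref{lem:fracPoin} with either $\dhalf h$ or $\HT \dhalf h$ on the right; and (b) the difference of averages $|\barint_{I_k} h - \barint_{I_{-k}} h|$, handled by telescoping through consecutive pairs $I_{l-1} \cup I_l$ for $l$ between $-|k|$ and $|k|$ and applying Poincar\'e on each pair. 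Multiplying by the weight $(1+|k|)^{-3/2}$, summing in $k$, and using the convolution bound $\sum_j (1+|j|)^{-3/2}(1+|k-j|)^{-3/2} \lesssim (1+|k|)^{-3/2}$ already exploited in the paper collapses the resulting double sum into the target $\sum_m (1+|m|)^{-3/2} \barint_{I_m} (|\dhalf h| + |\HT \dhalf h|)$, as required.

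\textbf{Main obstacle.} The principal technical difficulty is the symmetry step: implementing the cancellation rigorously so that the marginal non-integrability of $h$ at infinity is cured while the sharp $(1+|k|)^{-3/2}$ decay survives in the final weight. This is a cancellation argument in the spirit of Calder\'on--Zygmund operators with odd kernels, and the symmetry of $\eta_I$ is the minimal structural assumption making it work; indeed, without symmetry, integrating the non-decaying $\barint_I h$ against $\int K(t,s)(1-\eta_I(s))\, ds$ would introduce an uncontrollable term.
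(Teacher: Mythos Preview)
Your overall strategy---kernel representation for $\HT\dhalf$ off the support of $\phi$, symmetrization about $t_0$ to absorb the constant $\barint_I h$, dyadic decomposition into the $I_k$, and fractional Poincar\'e---is in the right spirit and matches the paper's hint that the odd kernel is what makes the symmetry of $\eta_I$ decisive. (The paper itself does not prove this lemma; it cites \cite[Lem.~8.7]{AEN}.)

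There is, however, a genuine gap in step~(b). The telescoping bound
\[
\Big|\barint_{I_k}h - \barint_{I_{-k}}h\Big|\ \le\ \sum_{l=-|k|+1}^{|k|}\Big|\barint_{I_l}h-\barint_{I_{l-1}}h\Big|
\]
is an \emph{unweighted} sum over $\sim 2|k|$ increments. After multiplying by the kernel weight $(1+|k|)^{-3/2}$ and summing in $k$, the swap $\sum_k(1+|k|)^{-3/2}\sum_{|l|\le|k|}=\sum_l\big(\sum_{|k|\ge|l|}(1+|k|)^{-3/2}\big)\sim\sum_l(1+|l|)^{-1/2}$ produces only the weight $(1+|l|)^{-1/2}$ in front of each Poincar\'e tail $\sum_j(1+|j|)^{-3/2}\barint_{I_{l+j}}|\dhalf h|$. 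The final coefficient of $\barint_{I_m}|\dhalf h|$ is therefore $\sum_l(1+|l|)^{-1/2}(1+|m-l|)^{-3/2}\sim(1+|m|)^{-1/2}$, not $(1+|m|)^{-3/2}$. The convolution inequality you invoke requires \emph{both} factors to carry a $(1+|\cdot|)^{-3/2}$ weight; here the inner telescoping sum is unweighted, so the inequality does not apply. The resulting bound is not even finite in general, since $\dhalf h\in\L^2$ only forces $(\barint_{I_m}|\dhalf h|)_m\in\ell^2$, against which $(1+|m|)^{-1/2}$ is not summable.

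In other words, the symmetrization does eliminate the raw constant $\barint_I h$, but it trades it for the far difference $h(s)-h(2t_0-s)$, whose oscillation across $I_k\cup I_{-k}$ grows linearly in $|k|$ under naive telescoping---exactly undoing the gain from the $|t-s|^{-3/2}$ kernel on the main (odd-kernel) term. Your remainder term with the even part $K_e$ is fine, since the extra $|\tau|/|\sigma|$ decay there absorbs the linear loss; the problem is isolated to the leading odd-kernel piece. Closing the argument requires a mechanism beyond straight telescoping---for instance an Abel summation that uses the explicit tail integrals $\sum_{j\ge l}\int_{I_j}K(t,s)\,ds$ \emph{together with} cancellation between the positive and negative tails, or a Cotlar-type maximal estimate applied after isolating $\HT$---and this is precisely the missing idea your outline does not supply.
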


We take the $\L^2(I \times Q)$ average on both sides of \eqref{eq:cotlar} and use that $\HT \dhalf$ commutes with averages in the spatial variable to give
\begin{align*}
 \left( \bariint_{I \times Q} |\HT\dhalf w_{2}|^2 \right)^{1/2} \lesssim   \sum_{k\in \IZ}  \frac{1}{1+|k|^{3/2}} \bariint_{I_k \times 2Q} (|\dhalf v|+ |\HT\dhalf v|). 
\end{align*}
This completes the required bound for the $\L^2(I \times Q)$ average of $|\HT\dhalf v|^2$ with $\gamma=4$ on the right-hand side.

It only remains to consider the bound for the $\L^2(I \times Q)$ average of $\dhalf w_{2} = \dhalf ((1-\eta_I)(h - \barint_I h))$. This can be done by another lemma on real functions from \cite{AEN}, with a weaker conclusion since $\dhalf$ has an even kernel.

\begin{lem}[{\cite[Rem.~8.10]{AEN}}]
Under the assumptions of Lemma~\ref{lem:cotlar} it holds
\begin{align*}
 \left(\barint_I \bigg|\dhalf \bigg((1-\eta)\Big(h - \barint_I h \Big)\bigg)\bigg|^2 \right)^{1/2}  \lesssim \bigg(\barint_{4I} |\HT \dhalf h|^2 \bigg)^{1/2} + \sum_{k \in \IZ} \frac{1}{1+|k|^{3/2}} \barint_{I_k} (|\dhalf h| + |\HT \dhalf h|).
\end{align*}
\end{lem}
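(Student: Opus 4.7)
The plan is to exploit the identity $\dhalf = -\HT\circ\HT\dhalf$, valid on $\cS'(\R)/\IC$ since the Fourier symbols satisfy $(\i\sgn\tau)(\i\sgn\tau\,|\tau|^{1/2}) = -|\tau|^{1/2}$. Denoting $w := (1-\eta)(h-\barint_I h)$, this reduces the estimate to bounding $\HT(\HT\dhalf w)$ on $I$. I split the inner argument as $\HT\dhalf w = \mathbf{1}_{4I}\HT\dhalf w + \mathbf{1}_{\R\setminus 4I}\HT\dhalf w$ and treat the two resulting contributions separately, using the $\L^2$-isometry of $\HT$ for the near part and its explicit convolution kernel for the remote part.

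For the near part, the $\L^2$-isometry of $\HT$ yields
\[ \Bigl(\barint_I |\HT(\mathbf{1}_{4I}\HT\dhalf w)|^2\Bigr)^{1/2} \lesssim \Bigl(\barint_{4I} |\HT\dhalf w|^2\Bigr)^{1/2}. \]
Since $\HT\dhalf$ annihilates constants, we further decompose $\HT\dhalf w = \HT\dhalf h - \HT\dhalf(\eta(h - \barint_I h))$. The first summand directly produces the local term $\bigl(\barint_{4I}|\HT\dhalf h|^2\bigr)^{1/2}$ in the statement. For the second, the $\L^2$-isometry of $\HT$ reduces matters to controlling $\|\dhalf(\eta(h-\barint_I h))\|_{\L^2(\R)}$. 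Since $\eta(h-\barint_I h)$ is supported in $4I$ and lies in $\H^{1/2}(\R)$, I would estimate this $\H^{1/2}$-seminorm using its Gagliardo representation combined with the Leibniz-type splitting
\[ \eta(t)(h(t)-\barint_I h) - \eta(s)(h(s)-\barint_I h) = \eta(t)(h(t)-h(s)) + (\eta(t)-\eta(s))(h(s)-\barint_I h), \]
together with the Lipschitz bound $|\eta(t)-\eta(s)|\lesssim \min(1,\ell(I)^{-1}|t-s|)$ and the fractional Poincar\'e inequality of Lemma~\ref{lem:fracPoin} applied on dyadic translates of $I$. This yields the required tail sum in $|\dhalf h|$ and $|\HT\dhalf h|$ with weights $(1+|k|)^{-3/2}$.

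For the remote part, the convolution kernel $(t-s)^{-1}$ of $\HT$ gives, for $t\in I$ and $s\in I_k$ with $|k|\geq 2$, the bound $|t-s|\sim |k|\ell(I)$, so pointwise in $t\in I$,
\[ \bigl|\HT(\mathbf{1}_{\R\setminus 4I}\HT\dhalf w)(t)\bigr| \lesssim \sum_{|k|\geq 2}\frac{1}{|k|}\barint_{I_k}|\HT\dhalf w|. \]
To bound each $\barint_{I_k}|\HT\dhalf w|$, I apply Lemma~\ref{lem:cotlar} on $I_k$ with a symmetric cutoff $\eta_{I_k}$ adapted to $I_k$, modulo a correction
$\HT\dhalf(\eta_{I_k}(h-\barint_{I_k}h) - \eta(h-\barint_I h))$
whose $\L^1$-average on $I_k$ is controlled via fractional Poincar\'e and a telescoping estimate of $\barint_{I_k} h - \barint_I h$. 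The resulting doubly-indexed sum in $k$ and $j$ is then consolidated through the Schur-type convolution inequality $\sum_j (1+|j|)^{-3/2}(1+|k-j|)^{-3/2} \lesssim (1+|k|)^{-3/2}$ into the single tail sum of the statement.

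The main obstacle is that the kernel of $\dhalf$ is \emph{even}, so the symmetry of $\eta$ about the midpoint of $I$ produces no cancellation (in contrast to Lemma~\ref{lem:cotlar}, where the odd kernel of $\HT\dhalf$ is essential). Consequently the local quadratic term $\bigl(\barint_{4I}|\HT\dhalf h|^2\bigr)^{1/2}$ cannot be absorbed into a tail sum and must appear explicitly in the conclusion—this is precisely the structural weakening relative to Lemma~\ref{lem:cotlar} noted in the statement. A secondary difficulty is the careful bookkeeping needed to pass from the $I_k$-localized estimates back to tail sums based at $I$ with the correct weight $(1+|k|)^{-3/2}$.
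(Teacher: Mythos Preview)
The paper does not prove this lemma; it is imported from \cite[Rem.~8.10]{AEN}. So the question is simply whether your argument is correct. It is not: there is a genuine gap in the remote part.

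Your pointwise bound
\[
\bigl|\HT(\mathbf{1}_{\R\setminus 4I}\HT\dhalf w)(t)\bigr| \lesssim \sum_{|k|\geq 2}\frac{1}{|k|}\barint_{I_k}|\HT\dhalf w|
\]
is correct, and so is the decomposition $\HT\dhalf w = \HT\dhalf\bigl((1-\eta_{I_k})(h-\barint_{I_k}h)\bigr) + \HT\dhalf(\text{correction})$. Applying Lemma~\ref{lem:cotlar} at $I_k$ to the first piece yields
\[
\barint_{I_k}\bigl|\HT\dhalf\bigl((1-\eta_{I_k})(h-c_k)\bigr)\bigr| \lesssim \sum_{j}\frac{1}{1+|j|^{3/2}}\barint_{I_{k+j}}\bigl(|\dhalf h|+|\HT\dhalf h|\bigr).
\]
But now the double sum you must consolidate is
\[
\sum_{|k|\geq 2}\frac{1}{|k|}\sum_{j}\frac{1}{1+|j|^{3/2}}\barint_{I_{k+j}}(\cdots)
=\sum_{m}\barint_{I_m}(\cdots)\sum_{k}\frac{1}{|k|\,(1+|m-k|)^{3/2}},
\]
and the inner convolution is $\sim (1+|m|)^{-1}$, not $(1+|m|)^{-3/2}$: the dominant contribution comes from $k\approx m$, where the $(1+|m-k|)^{-3/2}$ factor is harmless and the $|k|^{-1}$ factor sets the decay. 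The Schur inequality you quote, $\sum_j (1+|j|)^{-3/2}(1+|k-j|)^{-3/2} \lesssim (1+|k|)^{-3/2}$, has the wrong weights for this situation --- one of them is $|k|^{-1}$, inherited from the Hilbert kernel, not $|k|^{-3/2}$. The resulting tail $\sum_m (1+|m|)^{-1}\barint_{I_m}(\cdots)$ is not summable and does not match the statement.

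A secondary concern is the near part: you reduce to $\|\dhalf(\eta(h-c))\|_{\L^2(\R)}$ and claim this is controlled by the $\L^1$ tail sum via Gagliardo--Leibniz and Lemma~\ref{lem:fracPoin}. But the Leibniz term $\iint|\eta(t)|^2|h(t)-h(s)|^2|t-s|^{-2}$ is a localized $\dot{\H}^{1/2}$ square function of $h$, and Lemma~\ref{lem:fracPoin} with $p=2$ requires $q>1$ on the right, not $q=1$. You have not explained how to pass from $\L^2$-type control near $I$ to the $\L^1$ averages in the claimed bound; the local term $(\barint_{4I}|\HT\dhalf h|^2)^{1/2}$ that the statement allows has already been consumed by the $\HT\dhalf h$ summand in your splitting.
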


Indeed, for $h = \barint_{2Q}v$ as before, we obtain
\begin{align*}
\left( \bariint_{I \times Q} |\dhalf w_{2}|^2 \right)^{1/2} \lesssim   \bigg(\bariint_{4I \times 2Q} |\HT \dhalf v|^2 \bigg)^{1/2} + \sum_{k\in \IZ}  \frac{1}{1+|k|^{3/2}} \bariint_{I_k \times 2Q} (|\dhalf v|+ |\HT\dhalf v|). 
\end{align*}

The upshot is that we have already completed the reverse H\"older estimate for $|\HT \dhalf v|^2$ on any parabolic cylinder, and in particular on $4I \times 2Q$, with spatial enlargement by a factor $4$ on the right-hand side.  Hence, if we finally use $\gamma = 8$ on the right-hand side of \eqref{eq:rhrws}, then the above estimate completes the reverse H\"older bound for $\dhalf v$.
\end{proof}

%%%%%%%%%%%%%%%%%%%%%%%%%%%%%%%%%%%%%%%%%%%%%%%%%%%%%%%%%%%%%%%%%%%%%%%%%%%%%%%%%%%%%%%%%%%%%%%%%%%%%%%%%%%%%%%%%%%%%%%%%%%%%%%%%%%%%%%%%%%%%%%%%%%%%%%%%%%%%%%%%%%%%%%%
\section{Higher integrability of the parabolic differential: Operator theoretic proof}
\label{sec:proof2}

We provide a second proof for the higher integrability of the parabolic differential using a completely different method. 
For $1<p<\infty$,  we set 
\begin{align*}
 E_{p}:= \L^p(\R; \W^{1,p}(\R^n)) \cap \H^{1/2, p}(\R; \L^p(\R^n))
\end{align*}
with norm $\|u\|_{E_{p}} := (\|u\|_{p}^p + \|\nabla u \|_{p}^p+ \|\dhalf u\|_{p}^p)^{1/p}$, so that in particular $E=E_{2}$ is as in Section~\ref{sec:i}.  These are Banach spaces with $ \C^\infty_{0}(\ree)$ as a common dense subspace as is seen again by approximation via smooth convolution and truncation.

We shall use some results on complex interpolation of Banach spaces. The reader will find necessary background information in the textbook~\cite{BL}.  For the understanding of this paper it will be enough to know the complex interpolation identity
\begin{align*}
 [E_{p_0}, E_{p_1}]_\theta = E_p, \qquad p_0,p_1 \in (1,\infty), \quad \frac{1-\theta}{p_0} + \frac{\theta}{p_1} = \frac{1}{p},
\end{align*}
in the sense of Banach spaces with equivalent norms. We shall say that $(E_p)_{1<p<\infty}$ is a complex interpolation scale and the same holds true for the dual scale $(E_{p'}^*)_{1<p<\infty}$. These assertions were proved in \cite[Lem.~6.1]{AEN}.

We then have the following extension of Lemma~\ref{lem:2}.

\begin{lem}\label{lem:5}  The operator  $\cL= \partial_t -\div A(t,x)\nabla  + \kappa + 1:E_{2}\to E_{2}^*$ extends by density to a bounded operator from $E_{p}$ to  $(E_{p'})^*$ for $1<p<\infty$. This extension is invertible for $|p-2|$ small enough and its inverse agrees with the one calculated when $p=2$ on $(E_{2})^*\cap (E_{p'})^*$. The norm of the inverse and the smallness of $|p-2|$ depend only on ellipticity and dimensions.  
\end{lem}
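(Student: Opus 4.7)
The plan is to first establish the boundedness of $\cL$ as a map $E_p \to (E_{p'})^*$ via a direct H\"older-type estimate on $\C_0^\infty(\ree)$ and then invoke \v{S}ne\u{\i}berg's theorem \cite{Sneiberg-Original} to transport invertibility from $p=2$ to nearby exponents. For the boundedness, take $u,\phi \in \C_0^\infty(\ree)$ and expand
\begin{align*}
\langle \cL u, \phi \rangle = \iint_\ree A\nabla u \cdot \cl{\nabla \phi} + \HT \dhalf u \cdot \cl{\dhalf \phi} + (\kappa+1) u \cdot \cl{\phi} \; \d x \, \d t.
\end{align*}
A termwise application of H\"older's inequality, the $\L^\infty$ bound on $A$, and the $\L^p(\ree)$-boundedness of $\HT$ acting only in the time variable (a consequence of the Marcinkiewicz multiplier theorem) yield $|\langle \cL u,\phi\rangle| \lesssim \|u\|_{E_p}\|\phi\|_{E_{p'}}$ with implicit constant depending on ellipticity, dimensions and $p$. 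Density of $\C_0^\infty(\ree)$ in $E_p$ then provides a bounded extension $\cL_p : E_p \to (E_{p'})^*$ for every $1 < p < \infty$, and the family $\{\cL_p\}$ is consistent since all members arise from the same sesquilinear form on the common dense subspace.

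For the invertibility, the key input is \cite[Lem.~6.1]{AEN}, which states that $(E_p)_{1<p<\infty}$ is a complex interpolation scale; consequently, so is the dual scale $((E_{p'})^*)_{1<p<\infty}$. Since $\cL_2: E_2 \to E_2^*$ is invertible by Lemma~\ref{lem:2} with inverse norm depending only on ellipticity and dimensions, \v{S}ne\u{\i}berg's theorem applied to the consistent family $\{\cL_p\}$ produces an open neighborhood of $2$ inside $(1,\infty)$ on which each $\cL_p$ is invertible, with inverse norm controlled by $\|\cL_2^{-1}\|$ and the interpolation constants of the two scales. All these quantities depend only on ellipticity and dimensions, so both the size of the neighborhood and $\|\cL_p^{-1}\|$ do as well.

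The compatibility of $\cL_p^{-1}$ with $\cL_2^{-1}$ on $E_2^* \cap (E_{p'})^*$ is automatic from \v{S}ne\u{\i}berg's construction, in which $\cL_p^{-1}$ is produced as a convergent perturbation series starting from $\cL_2^{-1}$, hence returns the same value when applied to an element of the intersection. The main subtlety of the proof is thus the verification of the complex interpolation identity for $(E_p)_{1<p<\infty}$, together with local control on the interpolation constants near $p=2$; this has, however, already been handled in \cite[Lem.~6.1]{AEN}, and once that ingredient is taken for granted, the rest is a direct invocation of \v{S}ne\u{\i}berg's theorem.
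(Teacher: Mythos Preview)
Your proof is correct and follows essentially the same route as the paper: boundedness via H\"older's inequality on the sesquilinear form (you are slightly more explicit about the $\L^p$-boundedness of $\HT$, which the paper leaves implicit), invertibility near $p=2$ via \v{S}ne\u{\i}berg's theorem applied to the interpolation scale from \cite[Lem.~6.1]{AEN}, and a compatibility argument. The only minor divergence is that the paper cites \cite[Thm.~8.1]{KMM} for compatibility of the inverses as an abstract feature of complex interpolation, whereas you appeal to a perturbation-series form of \v{S}ne\u{\i}berg's construction; both justifications are legitimate, though the paper's reference is the more standard one.
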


\begin{proof} 
By definition, $\cL: E_{2}\to E_{2}^*$ acts via
\begin{align*}
 \langle \cL u, v \rangle = \iint_\ree A \nabla u \cdot \cl{\nabla v} + \HT \dhalf u \cdot \cl{\dhalf v} + (\kappa + 1)u \cdot \cl{v} \; \d x \, \d t.
\end{align*}
Thus, $E_{p} \to (E_{p'})^*$ boundedness of $\cL$ follows from H\"older's inequality and the norm depends only on ellipticity and dimension. As $\cL$ is invertible when $p=2$ by Lemma~\ref{lem:2}, the invertibility for $|p-2|$ small enough follows from  {\v{S}}ne{\u\ii}berg's result on bounded operators acting on interpolation scales, see \cite{Sneiberg-Original} or \cite[Thm.~A.1]{ABES3} for a qualitative version revealing that the smallness of $|p-2|$ and the bound for the inverse depend only on ellipticity and dimensions. Finally, the compatibility of the inverses is an abstract feature of complex interpolation, see Theorem~8.1 in \cite{KMM}.
\end{proof}

A simple but important consequence is

\begin{lem}
\label{lem:6} 
Let $\widetilde f \in \L^{p_{*}}(\ree)$ and $\widetilde F\in \L^p(\ree)$. Then $\cL^{-1}(\widetilde f+ \div \widetilde F)\in E_{p}$ when $|p-2|$ is sufficiently small (depending only on ellipticity and dimensions) and in this case
\begin{align*}
 \|\cL^{-1}(\widetilde f+ \div \widetilde F)\|_{E_p} \lesssim \|\widetilde f\|_{\L^{p_{*}}(\ree)} + \|\widetilde F\|_{\L^p(\ree)},
\end{align*}
with an implicit constant depending only on ellipticity and dimensions.
\end{lem}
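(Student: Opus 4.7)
The strategy is simply to verify that $\widetilde f + \div \widetilde F$ defines an element of $(E_{p'})^*$ whose norm is controlled by $\|\widetilde f\|_{p_*} + \|\widetilde F\|_{p}$, and then to invoke Lemma~\ref{lem:5} which provides $\cL^{-1} \colon (E_{p'})^* \to E_p$ as a bounded map for $|p-2|$ small enough, with operator bounds depending only on ellipticity and dimensions.

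The divergence part is the easy half. The map $\nabla \colon E_{p'} \to \L^{p'}(\ree; \IC^{mn})$ has norm at most one by the very definition of the $E_{p'}$-norm; dualizing this and pairing against $\widetilde F$ shows that the distribution $\div \widetilde F$ acts on $E_{p'}$ via $\phi \mapsto -\iint_{\ree} \widetilde F \cdot \cl{\nabla \phi}$ with bound $\|\widetilde F\|_{p} \|\phi\|_{E_{p'}}$.

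The zero-order right-hand side $\widetilde f$ is handled by the parabolic Sobolev embedding of Lemma~\ref{lem:1} applied at the dual exponent $p'$. Provided $p$ is close enough to $2$ we have $p' < n+2$, and using the identity
\begin{equation*}
\frac{1}{(p')^{*}} = \frac{1}{p'} - \frac{1}{n+2} = 1 - \frac{1}{p} - \frac{1}{n+2} = \frac{1}{(p_{*})'},
\end{equation*}
Lemma~\ref{lem:1} combined with density of $\C_0^\infty(\ree)$ in $E_{p'}$ gives the continuous embedding $E_{p'} \hookrightarrow \L^{(p_{*})'}(\ree)$. Dually, $\widetilde f$ is paired with $\phi \in E_{p'}$ through $\phi \mapsto \iint_{\ree} \widetilde f \cdot \cl{\phi}$, with bound $\|\widetilde f\|_{p_{*}} \|\phi\|_{(p_{*})'} \lesssim \|\widetilde f\|_{p_{*}} \|\phi\|_{E_{p'}}$.

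Combining the two contributions gives $\widetilde f + \div \widetilde F \in (E_{p'})^*$ with the announced bound, and Lemma~\ref{lem:5} then produces $\cL^{-1}(\widetilde f + \div \widetilde F) \in E_p$ together with the desired estimate. There is no real obstacle here: the only point to watch is that the threshold of smallness for $|p-2|$ be taken compatible with the requirement $p' < n+2$ in Lemma~\ref{lem:1} and with the {\v{S}}ne{\u\ii}berg threshold in Lemma~\ref{lem:5}; both depend only on ellipticity and dimensions, so taking the minimum preserves the same dependence.
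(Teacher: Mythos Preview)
Your proof is correct and follows essentially the same route as the paper: use Lemma~\ref{lem:1} at exponent $p'$ to embed $E_{p'}$ into $\L^{(p_*)'}(\ree)$, dualize to place $\widetilde f$ in $(E_{p'})^*$, handle $\div \widetilde F$ by the obvious pairing, and conclude via Lemma~\ref{lem:5}. Your write-up is in fact more explicit than the paper's, which compresses the same argument into three lines.
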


\begin{proof} 
From Lemma~\ref{lem:1}, $E_{p'}$ embeds into $\L^{{p'}^*}(\ree)$ when $1<p'<n+2$. As the dual exponent of ${p'}^*$ is $p_{*}$, we obtain that $\L^{p_{*}}(\ree)$ embeds into $(E_{p'})^*$.  Thus $\widetilde f+ \div \widetilde F\in (E_{p'})^*$ and the conclusion follows from Lemma~\ref{lem:5}.
\end{proof}

With this at hand, we are ready to give the second

\begin{proof}[Proof of Theorem~\ref{thm:reg}] Let $\chi\in \C_{0}^\infty(\Omega)$.  As before,  $v:=u\chi \in V$  is a weak solution to $\partial_t v -\div A(t,x)\nabla v  = \widetilde f+\div \widetilde F $ on $\ree$ with $\widetilde f, \widetilde F$ given by 
\eqref{eq:tilde}. By Proposition~\ref{prop:1} we know that $v\in E_2$ and $\cL v = (\kappa + 1)v + \widetilde f+\div \widetilde F$.

Let now $p>2$ be such that we have Lemma~\ref{lem:6} at our disposal. We may also suppose $p \leq 2^*$, which is equivalent to $p_* \leq 2$. If we assume $f\in \Lloc^{p_{*}}(\Omega)$ and $ F\in \Lloc^{p}(\Omega)$, then $\widetilde f  + (\kappa +1) v \in \L^{p_{*}} (\ree)$ and $\widetilde F\in \L^p(\ree)$, using also Theorem~\ref{thm:reg-i} to control $\|u \nabla \chi\|_{2^*}$ in the formula for $\widetilde F$. Hence, by compatibility of the inverses (Lemma~\ref{lem:5}) and Lemma~\ref{lem:6}, we obtain $v\in E_{p}$ with 
\begin{align}
\label{eq:second}
 \|v\|_{E_{p}} \lesssim  \|v\|_{p_{*}}+  \|\widetilde f\|_{p_*}+ \|\widetilde F\|_{p}.
\end{align}
The left-hand side controls $\|u \chi\|_{p} + \|\nabla(u\chi)\|_{p}+ \|\dhalf (u\chi)\|_{p}$ and we are done.
\end{proof}
%%%%%%%%%%%%%%%%%%%%%%%%%%%%%%%%%%%%%%%%%%%%%%%%%%%%%%%%%%%%%%%%%%%%%%%%%%%%%%%%%%%%%%%%%%%%%%%%%%%%%%%%%%%%%%%%%%%%%%%%%%%%%%%%%%%%%%%%%%%%%%%%%%%%%%%%%%%%%%%%%%%%%%%%
\section{Local higher regularity estimates}
\label{sec:proof3}

Eventually, we shall use the previously obtained qualitative information of higher integrability for the parabolic differential of the localized solution to obtain scale-invariant local higher regularity estimates. This is summarized in the following theorem. As usual, $\Omega = I_0 \times Q_0$ denotes the ambient parabolic cylinder.

\begin{thm}
\label{thm:RH grad} 
Suppose $u$ is a local weak solution in $\Omega$ to $\partial_t u - \div A(t,x) \nabla u = f + \div F$, where $f \in \Lloc^{2_*}(\Omega; \IC^m)$ and $F \in \Lloc^{2}(\Omega; \IC^{mn})$. Let $\gamma>1$ and $I \times Q$ a parabolic cylinder with $\ell(I) \sim r(Q)^2$ such that $\cl{\gamma^2 I \times \gamma Q} \subseteq \Omega$. If $p>2$ is sufficiently close to $2$ (depending only on ellipticity and dimensions), then with $\alpha= 1/2-1/p$,
\begin{align}
\label{eq:rht}
\begin{split}
\bigg(\bariint_{ I \times Q} |\nabla u|^p & \bigg)^{1/p}
+  \sup_{t\in I} \left(\barint_{ Q } | u(t,\blank)|^p \right)^{1/p} 
+ \sup_{t,s\in I} \left(\barint_{ Q } \frac{ | u(t,\blank) - u(s,\blank)|^p}{|t-s|^{\alpha p}} \right)^{1/p} \\ 
& \lesssim \frac{1}{r(Q)} \bigg(\bariint_{\gamma^2 I \times \gamma Q} |u|^2\bigg)^{1/2}
 + \left(\bariint_{\gamma^2 I \times \gamma Q} |F|^p\right)^{1/p}
 + r(Q) \left(\bariint_{\gamma^2 I \times \gamma Q} |f|^{p_{*}}\right)^{1/p_{*}}.
\end{split}
\end{align}
The implicit constant depends only on ellipticity, dimensions, $\gamma$ and the constants controlling the ratio $r(Q)^2/\ell(I)$. 
\end{thm}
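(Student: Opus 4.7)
The plan is to localize, apply the global higher-integrability estimate \eqref{eq:second} from Sections~\ref{sec:proof1}--\ref{sec:proof2}, and read off the three quantities on the left of \eqref{eq:rht} from the resulting bound on the $E_p$-norm of the localized solution. By parabolic rescaling I may reduce to $r(Q) = 1$, allowing the implicit constants to depend on ellipticity (with the hidden transformation $\kappa \mapsto \kappa r(Q)^2$). Fix nested cylinders $I \times Q \Subset I_1 \times Q_1 \Subset I_2 \times Q_2 \Subset \gamma^2 I \times \gamma Q$ and a cutoff $\chi \in \C_0^\infty(\Omega)$ equal to $1$ on $I_1 \times Q_1$, supported in $I_2 \times Q_2$, with uniformly bounded derivatives. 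Lemma~\ref{lem:localization} shows that $v := u\chi$ is a global weak solution to $\partial_t v - \div A\nabla v = \widetilde f + \div \widetilde F$ on $\ree$ with $\widetilde f,\widetilde F$ from \eqref{eq:tilde}, and \eqref{eq:second} yields
\begin{align*}
\|v\|_{E_p} \lesssim \|v\|_{p_*} + \|\widetilde f\|_{p_*} + \|\widetilde F\|_p
\end{align*}
for $p > 2$ sufficiently close to $2$, with constants depending only on ellipticity and dimensions.

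\textbf{Controlling the right-hand side.} The pieces of $\widetilde f$ and $\widetilde F$ that involve $f$ and $F$ directly (namely $\chi f$, $\chi F$, and $F\cdot \nabla\chi$) are bounded by the $\L^{p_*}$, $\L^p$ averages of $f$, $F$ on $\gamma^2 I \times \gamma Q$. The remaining ``commutator'' pieces involve $u\partial_t\chi$, $A\nabla u \cdot \nabla\chi$ (entering $\widetilde f$) and $u\nabla\chi$ (entering $\widetilde F$). Because $p_* < 2 < p \leq 2^*$ for $p$ close to $2$, I would use H\"older's inequality to reduce $\|u\partial_t\chi\|_{p_*}$ and $\|A\nabla u\cdot\nabla\chi\|_{p_*}$ to $\L^2$ norms on $I_2 \times Q_2$; Caccioppoli's inequality (Proposition~\ref{ref:cac}) applied on $I_1 \times Q_1 \Subset \gamma^2 I \times \gamma Q$ then bounds $\|\nabla u\|_{\L^2(I_1 \times Q_1)}$ by exactly the right-hand side of \eqref{eq:rht}. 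For $\|u\nabla\chi\|_p$ and $\|v\|_{p_*}$, I invoke the reverse H\"older estimate (Proposition~\ref{ref:basic}), which controls $\|u\|_{\L^p(I_1 \times Q_1)}$ (valid because $p \leq 2^*$) and $\|u\|_{\L^{p_*}(I_1\times Q_1)}$ by the $\L^1$ average of $u$ on $\gamma^2 I \times \gamma Q$, itself dominated by the $\L^2$ average. Undoing the initial rescaling restores the explicit $r(Q)$-factors in \eqref{eq:rht}.

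\textbf{Extracting the three norms and main obstacle.} Since $\chi \equiv 1$ on $I \times Q$, one has $\|\nabla u\|_{\L^p(I \times Q)} \leq \|\nabla v\|_{\L^p(\ree)} \leq \|v\|_{E_p}$, which provides the gradient term. For the supremum and H\"older modulus in time, I would use the one-dimensional fractional Sobolev embedding
\begin{align*}
\H^{1/2, p}(\R; \L^p(\R^n)) \hookrightarrow \C^{0, 1/2 - 1/p}(\R; \L^p(\R^n)),
\end{align*}
valid for $p > 2$ since $\tfrac{1}{2} p > 1$; this is the standard Banach-valued version of $\W^{s,p}(\R) \hookrightarrow \C^{0, s - 1/p}(\R)$, which follows from a direct kernel estimate on $(1-\partial_t^2)^{-1/4}$. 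Applied to $v$, this bounds $\sup_{t \in \R} \|v(t,\cdot)\|_{\L^p(\R^n)}$ and the $(1/2 - 1/p)$-H\"older seminorm in time of $t \mapsto v(t,\cdot)$ valued in $\L^p(\R^n)$ by $\|v\|_{E_p}$; restricting to $t \in I$ and to the spatial ball $Q$, where $v$ and $u$ coincide, produces the two remaining terms of \eqref{eq:rht}. The principal technical hurdle is the careful bookkeeping of the nested intermediate cylinders combined with the tracking of scale-dependent constants after rescaling, so that no residual dependence on the cutoff remains in the final estimate; the vector-valued Sobolev embedding is classical but merits an explicit reference in the setting of the paper.
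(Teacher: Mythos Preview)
Your proof is correct and follows essentially the same route as the paper: rescale to $r(Q)=1$, localize via $v=u\chi$, apply the global higher-integrability estimate (you use \eqref{eq:second}, the paper offers either Corollary~\ref{cor:main} or \eqref{eq:second}), and then control $\|\widetilde f\|_{p_*}+\|\widetilde F\|_p$ by Caccioppoli and the reverse H\"older estimate~\eqref{eq:rhu}. The only notable difference is in how the temporal H\"older regularity is extracted: you invoke the vector-valued embedding $\H^{1/2,p}(\R;\L^p(\R^n))\hookrightarrow \C^{0,1/2-1/p}(\R;\L^p(\R^n))$ as a black box, whereas the paper carries this out by hand, applying the scalar fractional Poincar\'e inequality (Lemma~\ref{lem:fracPoin}) to $v_\eps(\cdot,x)$ for a.e.\ $x$, invoking Campanato's characterization of H\"older spaces, and then taking the $\L^p(Q)$ average in $x$; these are equivalent arguments, with the paper's version having the advantage of being self-contained using already-established tools.
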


\begin{proof} We assume again $r(Q) = 1$ as rescaling will give us the right powers of $r(Q)$. We follow the usual strategy and pick $\chi\in \C_{0}^\infty(\Omega)$, $\chi = 1$ on $I \times Q$, with support in $\gamma^2 I \times \gamma Q$. Then $v:=u\chi \in V$ is a weak solution to $\partial_t v -\div A(t,x)\nabla v = \widetilde f+\div \widetilde F$ on $\ree$ with the relations \eqref{eq:tilde}
and $\widetilde f \in \L^{2_*}(\ree)$, $\widetilde F\in \L^2(\ree)$. By Corollary~\ref{cor:main} we have if $p-2>0$ is small enough,
\begin{align}
\label{eq1:Proof1}
\|\nabla v \|_{p} + \|\dhalf v\|_{p}  \lesssim   \| \widetilde F\|_{p}  
+   \| \widetilde f\|_{p_*}.
\end{align}
Alternatively, we could have used \eqref{eq:second} here at the expense of a term $\|v\|_{p_*} \lesssim \|v\|_p$ on the right, which turns out to be harmless. Indeed, we have from \eqref{eq:rhu} if $p\le 2^*$, as we may assume,
\begin{align}
\label{eq2:Proof1}
\|v\|_{p } \lesssim \|v \|_{2}+ \|\widetilde F\|_{2}+ \|\widetilde f\|_{2_*} \lesssim \|u \chi \|_{2} + \|\widetilde F\|_{p}+ \|\widetilde f\|_{p_*}.
\end{align}
We have used $2_* \leq p_*$ and $2<p$ in the second step.

We have shown that $v, \dhalf v$ are controlled in $ \L^p(\ree)$ and in $\L^2(\ree)$. Since $p>2$, a H\"older norm estimate on $v$ will follow from classical embeddings. We approximate $v$ through convolution with smooth kernels in the $t$-variable, say $v_\eps = v \ast_t \varphi_\eps$, where $\eps >0$.  For almost every $x \in \R^n$ we can apply the fractional Poincar\'e inequality from Lemma~\ref{lem:fracPoin} to $v_\eps(\blank,x)$. Hence, for any interval $J \subset \R$, and $\alpha=1/2-1/p$, we have
$$
\left( \barint_{J} \Big|v_\eps(\blank ,x) - \barint_{J}v_\eps(\blank,x)\Big|^p \right)^{1/p} \lesssim \ell(J)^\alpha \|\dhalf v_\eps(\blank,x)\|_{\L^p(\R)},
$$
and the Campanato characterization of H\"older regularity yields
\begin{align*}
 \sup_{t \in J} |v_\eps(t,x)| + \sup_{t,s \in J} \frac{|v_\eps(t,x) - v_\eps(s,x)|}{|t-s|^\alpha} \lesssim \|v_\eps(\blank,x)\|_{\L^p(\R)} +  \|\dhalf v_\eps(\blank,x)\|_{\L^p(\R)},
\end{align*}
where the implicit constant depends also on $\ell(J)$, see Theorem~2.9 in \cite{Giusti}. We take $J=I$, average the $p$-th power of this estimate over $x \in Q$ and then we can pass to the limit as $\eps \to 0$. This reveals that the left hand side of \eqref{eq:rht} is bounded by $\|v\|_{p}+ \|\nabla v\|_{p} + \|\dhalf v\|_p$. (The reader should recall the normalization $r = 1\sim \ell$ and the construction of $\chi$.) In view of \eqref{eq1:Proof1} and \eqref{eq2:Proof1}, we see that it remains to control $ \| \widetilde F\|_{p}  +   \| \widetilde f\|_{p_*}$ from above by the right-hand side of \eqref{eq:rht}.

We begin with $\widetilde F$. Let $1<\delta<\gamma$ be such that the support of $\chi$ is contained in $\delta^2 I\times \delta Q$. By \eqref{eq:tilde} we have,
\begin{align*}
\| \widetilde F\|_{p}    \lesssim \left(\iint_{\gamma^2 I \times \gamma Q} |F|^p\right)^{1/p} +
 \left(\iint_{\delta^2 I\times \delta Q} |u|^p\right)^{1/p}  
\end{align*}
and since we already assumed $2\leq p\le 2^*$ (which implies $2_* \leq p_* \leq 2$), we can use \eqref{eq:rhu} to conclude
\begin{align*}
 \| \widetilde F\|_{p} \lesssim\left(\iint_{\gamma^2 I \times \gamma Q} |F|^p\right)^{1/p}  + \bigg(\iint_{\gamma^2 I \times \gamma Q} |u|^2\bigg)^{1/2}+ \left(\iint_{\gamma^2 I \times \gamma Q} |f|^{p_*}\right)^{1/{p_*}}.
\end{align*}
Similarly, we use the definition of $\widetilde f$ in \eqref{eq:tilde} to infer
 \begin{align*}
\label{}
\| \widetilde f\|_{p_*}       \lesssim \left(\iint_{\delta^2 I\times \delta Q} |f|^{p_*}+ |u|^{p_*}+ |\nabla u|^{p_*} + |F|^{p_*}\right)^{1/{p_*}}  
\end{align*}
and we are done as $p_{*}\le 2$ and since the term integral over $|\nabla u|^{p_*}$ can be treated using Proposition~\ref{ref:cac}. 
\end{proof}

Finally, we obtain a true reverse H\"older estimate for $\nabla u$, that is to say, an analogous estimate without $u$ on the right-hand side.

\begin{thm}
\label{thm:GS} 
Consider the setup of Theorem~\ref{thm:RH grad} and let $\gamma>1$ and $I \times Q$ be a parabolic cylinder with $\ell(I) \sim r(Q)^2$ such that $\cl{\gamma^2 I \times \gamma Q} \subseteq \Omega$. If $p>2$ is sufficiently close to $2$ (depending only on ellipticity and dimensions), then  
\begin{equation}
 \label{eq:grad}
 \left( \bariint_{I \times Q} |\nabla u|^p\right)^{1/p} \lesssim   \bariint_{\gamma^2 I \times \gamma Q} |\nabla u| + \left(\bariint_{\gamma^2 I \times \gamma Q} |F|^p\right)^{1/p} +  r(Q)  \left(\bariint_{\gamma^2 I \times \gamma Q} |f|^{p_{*}}\right)^{1/p_{*}}.
\end{equation}
The implicit constant depends on  ellipticity, dimensions, $\gamma$ and the constants controlling the ratio $r(Q)^2/\ell(I)$. 
\end{thm}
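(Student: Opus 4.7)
The plan is to apply Theorem~\ref{thm:RH grad} to the shifted solution $u-c$ for a constant $c$, bound the resulting $L^2$ oscillation via a parabolic Poincar\'e--Sobolev inequality by a sub-critical $L^{q_0}$ average of $|\nabla u|$, and then push $q_0$ down to $1$ by interpolation and absorption.

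Since constants lie in the kernel of $\partial_t - \div A\nabla$, for any $c\in\IC^m$ the translate $u-c$ is a weak solution of the same equation with the same data $f, F$. Fixing an intermediate scale $1<\gamma_1<\gamma$ and applying Theorem~\ref{thm:RH grad} to $u-c$ on the pair of cylinders $I\times Q\subset\gamma_1^2 I\times \gamma_1 Q$ yields
\begin{align*}
\left(\bariint_{I\times Q}|\nabla u|^p\right)^{1/p}\lesssim \frac{1}{r(Q)}\left(\bariint_{\gamma_1^2 I\times\gamma_1 Q}|u-c|^2\right)^{1/2}+(\text{$F$ and $f$ terms}).
\end{align*}
Next I would choose $c$ to be a suitable space-time average of $u$ over $\gamma_1^2 I\times\gamma_1 Q$ and establish a parabolic Poincar\'e--Sobolev inequality of the form
\begin{align*}
\frac{1}{r(Q)}\left(\bariint_{\gamma_1^2 I\times\gamma_1 Q}|u-c|^2\right)^{1/2}\lesssim\left(\bariint_{\gamma^2 I\times\gamma Q}|\nabla u|^{q_0}\right)^{1/q_0}+(\text{admissible $F, f$ terms})
\end{align*}
for some $q_0\in[1,2)$. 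The spatial part comes from slice-wise Sobolev--Poincar\'e on $\gamma_1 Q$, with $q_0=2n/(n+2)$ when $n\ge 2$ and $q_0=1$ when $n=1$ (via $W^{1,1}\hookrightarrow L^\infty$). The temporal part, namely controlling the oscillation in $t$ of the spatial mean $\barint_{\gamma_1 Q}u(t,\cdot)$, is obtained by testing the equation against a smooth spatial cutoff and then applying a 1D Poincar\'e inequality in time, which produces only $f$ and $F$ contributions at admissible exponents.

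Combining the two displays gives a reverse-H\"older-type estimate with $L^{q_0}$ on the right. When $q_0>1$, I drop the exponent to $1$ by interpolating $L^{q_0}$ between $L^1$ and $L^p$: writing $q_0=\theta+(1-\theta)p$ with $\theta=(p-q_0)/(p-1)\in(0,1)$, H\"older's inequality gives
\begin{align*}
\left(\bariint|\nabla u|^{q_0}\right)^{1/q_0}\le\left(\bariint|\nabla u|\right)^{\theta/q_0}\left(\bariint|\nabla u|^p\right)^{(1-\theta)/q_0},
\end{align*}
after which Young's inequality splits the right into $\epsilon\,(\bariint|\nabla u|^p)^{1/p}+C_\epsilon\bariint|\nabla u|$. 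A standard iteration lemma along a finite chain of nested cylinders between $\gamma_1$ and $\gamma$ absorbs the $\epsilon$-scaled $L^p$ term into the left-hand side, yielding \eqref{eq:grad}. We use here the \emph{a priori} finiteness of $(\bariint_{\gamma^2 I\times\gamma Q}|\nabla u|^p)^{1/p}$ guaranteed by Theorem~\ref{thm:RH grad}.

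The main obstacle is the parabolic Poincar\'e--Sobolev inequality above; the delicate point is the time-variation of the spatial mean when one insists that $c$ be a true constant, because the divergence terms coming from $\partial_t u$ produce boundary contributions on $\partial(\gamma_1 Q)$ that must be smoothed out via a spatial cutoff function. A technically simpler alternative, bypassing Theorem~\ref{thm:RH grad} altogether, would be to start from \eqref{eq:Cac1} applied to $u-c$, combine it with a parabolic $L^1$-Poincar\'e to produce a reverse-H\"older estimate at the $(L^1,L^2)$ level, and then conclude via the classical parabolic Gehring lemma.
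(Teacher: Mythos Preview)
Your approach is correct but differs from the paper's in two key respects.

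First, the paper does \emph{not} subtract a constant. It uses Struwe's ``weighted means trick'': with $\chi(t,x)=\eta(t)\varphi(x)$ it sets $\widetilde u(t):=a\int u(t,x)\varphi(x)\,\d x$ and works with $w:=(u-\widetilde u)\eta$. Since $\widetilde u$ is a spatial average at each fixed time, the Poincar\'e step is purely spatial and gives $\|u-\widetilde u\|_{\L^2(\gamma Q)}\lesssim\|\nabla u\|_{\L^2(\gamma Q)}$ directly, with no need to control the time-oscillation of a mean. The price is that $w$ is no longer a solution of the original equation; instead the paper computes $\partial_t\widetilde u$ explicitly from the equation (this is precisely your ``testing against a spatial cutoff'' step) and finds that $w$ solves $\partial_t w-\div A\nabla w=f'+\div(\eta F)$ with an explicit $f'$ involving $\nabla u$, $F$, $f$ in $\L^{p_*}$-admissible form. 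Theorem~\ref{thm:RH grad} applied to $w$ then yields \eqref{eq:grad} with an $\L^2$ average of $|\nabla u|$ on the right. In effect, the paper absorbs your ``temporal part'' into the modified right-hand side rather than treating it as a separate Poincar\'e contribution; this sidesteps the delicate point you flagged about boundary contributions.

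Second, to pass from the $\L^2$ average to the $\L^1$ average, the paper invokes the self-improvement property of reverse H\"older inequalities (\cite{IN}, \cite[Thm.~B.1]{BCF}) rather than your interpolation--absorption--iteration scheme. Both routes are standard; yours is more self-contained, while the paper's is shorter once the black box is accepted. Your alternative suggestion at the end (Caccioppoli plus parabolic Poincar\'e plus classical Gehring) is exactly what the paper acknowledges in the remark following the proof.
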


\begin{proof} 
As in the proof of Proposition~\ref{ref:basic} the self-improving properties~\cite{BCF, IN} of reverse H\"older estimates yield the conclusion once we have managed to prove \eqref{eq:grad} with an $\L^{2}$ average of $|\nabla u|$ on the right-hand side. By scaling it is also enough to assume $r(Q) = 1$.
    
We use  the ``weighted means trick'' introduced by Struwe in \cite{Stru}.
We choose $\chi$ real-valued, equal to $1$ on $I \times Q$ and supported in $\gamma^2 I \times \gamma Q$ of the form $\chi(t,x)=\eta(t)\varphi(x)$. Then define the weighted mean 
\begin{align*}
 \widetilde u(t) := a \int u(t,x) \varphi(x)\; \d x, \quad a:=\bigg(\int \varphi(x) \; \d x \bigg)^{-1}.
\end{align*}
We set $w(t,x):=(u(t,x)-\widetilde u(t))\eta(t)$.  We remark that $\nabla u = \nabla w$. It is thus enough to estimate $ \big( \iint_{I \times Q} |\nabla w|^p\big)^{1/p}$. We proceed as follows. 
    
It follows from the equation for $u$ that $\widetilde u $ is absolutely continuous on $I$ with 
\begin{align}
\label{eq:pdtildeu}
 \partial_t \widetilde u(t)  =   a \int -(A \nabla u +F)\cdot \nabla \varphi + f\varphi \; \d x
\end{align}
almost everywhere. Since $\eta$ and $\widetilde u$ depend only on $t$, we have $\partial_t w -\div A(t,x)\nabla w  = f' +\div (\eta F)$ in $\Omega$, where, omitting the variables except for the integration, 
\begin{align*}
 f' &:= \eta \Big( f-a \int  f\varphi \; \d x\Big) +   a\eta \int   (A \nabla u - F)\cdot  \nabla \varphi\; \d x + (u-\widetilde u)\pd_{t} \eta.
\end{align*}
Theorem~\ref{thm:RH grad} applied to $w$ yields
\begin{align*}
 \bigg( \iint_{I \times Q} |\nabla w|^p\bigg)^{1/p} \lesssim \bigg(\bariint_{\gamma^2 I \times \gamma Q} |w|^2\bigg)^{1/2}
 + \left(\bariint_{\gamma^2 I \times \gamma Q} |F|^p\right)^{1/p}
 + \left(\bariint_{\gamma^2 I \times \gamma Q} |f'|^{p_{*}}\right)^{1/p_{*}}.
\end{align*}
Now, we insert the definition of $f'$ and estimate all averages crudely using the triangle inequality and the support properties of $\eta$ and $\varphi$. In this manner, we arrive at
\begin{align*}
 \bigg( \iint_{I \times Q} |\nabla w|^p\bigg)^{1/p} 
 \lesssim 
 \bigg(\bariint_{\gamma^2 I \times \gamma Q} |w|^2\bigg)^{1/2}
 + \left(\bariint_{\gamma^2 I \times \gamma Q} |F|^p\right)^{1/p}
 + \left(\bariint_{\gamma^2 I \times \gamma Q} |f|^{p_{*}} + |w|^{p_*} + |\nabla w|^{p_*} \right)^{1/p_{*}}.
\end{align*}
Of course, we may assume $p \leq 2^*$, which is equivalent to $p_* \leq 2$ and hence allows us to bound $\L^{p_*}$ averages of $|\nabla w| = |\nabla u|$ and $|w|$ by the corresponding $\L^2$ averages. As mentioned previously, it suffices to prove \eqref{eq:grad} with an $\L^2$ average of $|\nabla u|$ on the right hand side. So, we are left with controlling the $\L^2$ average of $w$. Since taking weighted averages $u \mapsto \widetilde u$ defines a projection from $\W^{1,2}(\gamma Q)$ onto $\IC$, a variant of Poincar\'e's inequality on $\L^2(\gamma Q)$ discussed for example in \cite[Lem.~8.3.1]{Adams-Hedberg} yields
\begin{align*}
 \int_{\gamma^2 I \times \gamma Q} |w|^2 \leq \int_{\gamma^2 I} \int_{\gamma Q} |u-\widetilde u|^2 \; \d x \, \d t \lesssim \int_{\gamma^2 I} \int_{\gamma Q} |\nabla u|^2 \; \d x \, \d t.
\end{align*}
The proof is complete.
\end{proof}
 
\begin{rem}
 Once  Theorem~\ref{thm:reg} is established, it is also possible to prove directly  \eqref{eq:grad} under our assumptions by adapting the original argument in \cite{GS} and invoking the usual Gehring lemma.
\end{rem}
%%%%%%%%%%%%%%%%%%%%%%%%%%%%%%%%%%%%%%%%%%%%%%%%%%%%%%%%%%%%%%%%%%%%%%%%%%%%%%%%%%%%%%%%%%%%%%%%%%%%%%%%%%%%%%%%%%%%%%%%%%%%%%%%%%%%%%%%%%%%%%%%%%%%%%%%%%%%%%%%%%%%%%%%
\appendix
{
%%%%%%%%%%%%%%%%%%%%%%%%%%%%%%%%%%%%%%%%%%%%%%%%%%%%%%%%%%%%%%%%%%%%%%%%%%%%%%%%%%%%%%%%%%%%%%%%%%%%%%%%%%%%%%%%%%%%%%%%%%%%%%%%%%%%%%%%%%%%%%%%%%%%%%%%%%%%%%%%%%%%%%%%
\section{Extension of weakly elliptic coefficients}

We provide here a simple lemma justifying the use of the global G\aa rding inequality in the context of local weak solutions.

\begin{lem}
\label{lem::extendGarding}
Let $Q_0 \subset \R^n$ be an open set. Let $A \in \L^\infty(Q_0; \Lop(\IC^{nm}))$ and suppose that there exist $\lambda > 0$ and $\kappa \geq 0$ such that 
\begin{align*}
  \Re \int_{Q_0} A \nabla u \cdot  \con{\nabla u} \geq \lambda \int_{Q_0} |\nabla u|^2 - \kappa  \int_{Q_0} | u|^2, \qquad u \in \W_0^{1,2}(Q_0; \IC^m).
\end{align*}
Let $Q$ be a compact subset of $Q_0$. If $\sigma > 0$ is sufficiently large, depending only on $\lambda$, $\|A\|_\infty$, $n$, $m$ and the distance $\mathrm{dist}(Q, \R^n \setminus Q_0)$, then $\widetilde A := \mathbf{1}_{Q_0} A + \sigma \mathbf{1}_{\R^n \setminus Q}$ satisfies for some constant $K$ depending on the same parameters,
\begin{align*}
  \Re \int_{\R^n} \widetilde A \nabla u \cdot  \con{\nabla u} \geq \frac{\lambda}{4} \int_{\R^n} |\nabla u|^2 - K  \int_{\R^n} | u|^2, \qquad u \in \W^{1,2}(\R^n; \IC^m).
\end{align*}
\end{lem}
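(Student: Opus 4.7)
The strategy is a standard cut-off/perturbation argument exploiting the coercivity of the $\sigma \mathrm{Id}$ piece of $\widetilde A$ on the complement of $Q$. I would pick a real-valued $\chi \in \C_0^\infty(Q_0)$ with $0 \le \chi \le 1$, identically $1$ on a neighborhood of $Q$, and with $\|\nabla \chi\|_\infty \lesssim \mathrm{dist}(Q, \R^n \setminus Q_0)^{-1}$. For $u \in \W^{1,2}(\R^n;\IC^m)$ decompose $u = v_1 + v_2$ with $v_1 := \chi u \in \W_0^{1,2}(Q_0;\IC^m)$ and $v_2 := (1-\chi)u$. The key structural facts to record are that $\nabla v_1 = \nabla u$ on $Q$, while $v_2$ and $\nabla v_2$ vanish on a neighborhood of $Q$; in particular
\[
\|\nabla v_2\|_2^2 \lesssim \|\nabla\chi\|_\infty^2 \|u\|_2^2 + \int_{\R^n \setminus Q}|\nabla u|^2
\qquad \text{and} \qquad
\|\nabla u\|_2^2 \le \|\nabla v_1\|_2^2 + \int_{\R^n \setminus Q}|\nabla u|^2.
\]

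Next I would compute directly from the definition of $\widetilde A$,
\[
\Re \int_{\R^n}\widetilde A \nabla u \cdot \overline{\nabla u} = \Re \int_{Q_0} A \nabla u \cdot \overline{\nabla u} + \sigma \int_{\R^n \setminus Q}|\nabla u|^2,
\]
expand $\nabla u = \nabla v_1 + \nabla v_2$ in the first integral, apply the hypothesis to $v_1 \in \W_0^{1,2}(Q_0;\IC^m)$, and control the cross term $2\Re \int_{Q_0} A \nabla v_1 \cdot \overline{\nabla v_2}$ by Young's inequality. For any $\eps \in (0,\lambda)$ this yields
\[
\Re \int_{Q_0} A \nabla u \cdot \overline{\nabla u} \ge (\lambda-\eps)\|\nabla v_1\|_2^2 - \kappa\|v_1\|_2^2 - C_\eps \|A\|_\infty^2 \|\nabla v_2\|_2^2.
\]
Choosing $\eps = \lambda/4$, substituting the upper bound for $\|\nabla v_2\|_2^2$ above, and taking $\sigma$ large enough so that $\sigma - 2C_\eps \|A\|_\infty^2 \ge \lambda/2$, the coefficient of $\int_{\R^n \setminus Q}|\nabla u|^2$ becomes at least $\lambda/2$. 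Combined with the bound $\|\nabla v_1\|_2^2 \ge \int_Q|\nabla u|^2$ and $\|v_1\|_2 \le \|u\|_2$, this produces $\tfrac{\lambda}{4}\|\nabla u\|_2^2$ up to a remainder of the form $-K\|u\|_2^2$, where $K$ depends only on $\kappa$, $\lambda$, $\|A\|_\infty$, $\|\nabla\chi\|_\infty$ and dimensions.

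The main obstacle I foresee is purely bookkeeping rather than conceptual: ensuring that the same Young parameter is used consistently and that the $\sigma$-term simultaneously absorbs the error $C_\eps\|A\|_\infty^2\int_{\R^n \setminus Q}|\nabla u|^2$ and leaves a coercive remainder of order $\lambda$ on $\int_{\R^n \setminus Q}|\nabla u|^2$. This is precisely what forces the lower bound on $\sigma$ to depend on $\|A\|_\infty^2/\lambda$ and on $\mathrm{dist}(Q, \R^n\setminus Q_0)$ through $\|\nabla\chi\|_\infty$, in accordance with the statement of the lemma.
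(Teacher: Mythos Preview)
Your argument is correct, and in fact it follows a slightly different and cleaner route than the paper. The paper splits $u=u_1+u_2$ and then expands the full $\widetilde A$-form into three pieces; since $\widetilde A$ contains $\sigma\mathbf{1}_{\R^n\setminus Q}$, the cross term picks up a contribution $2\sigma\Re\int\nabla u_1\cdot\overline{\nabla u_2}$ that scales with $\sigma$ and could in principle swamp the good $\sigma$-term. The paper handles this via the pointwise observation $\Re(\varphi\nabla u\cdot\overline{(1-\varphi)\nabla u})=\varphi(1-\varphi)|\nabla u|^2\ge 0$, so that the dangerous part has a sign. You avoid this issue altogether by first separating $\widetilde A$ into its $A$-part and its $\sigma$-part, keeping $\sigma\int_{\R^n\setminus Q}|\nabla u|^2$ intact as a manifestly positive reservoir, and only decomposing $u=v_1+v_2$ inside the $A$-integral, where all error terms are bounded independently of $\sigma$. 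Your version therefore needs no sign trick and makes the dependence of $\sigma$ on $\|A\|_\infty$ and $\lambda$ more transparent. One small point: when you expand $\int_{Q_0}A\nabla u\cdot\overline{\nabla u}$ you should also mention the diagonal term $\Re\int_{Q_0}A\nabla v_2\cdot\overline{\nabla v_2}\ge -\|A\|_\infty\|\nabla v_2\|_2^2$; it is harmless and is evidently absorbed into your $C_\eps\|A\|_\infty^2\|\nabla v_2\|_2^2$, but it deserves a word.
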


\begin{proof}
Let $\varphi: \R^n \to [0,1]$ a smooth cut-off that is $1$ on $Q$, has support in $Q_0$ and satisfies $\|\nabla \varphi\|_\infty \leq \frac{c}{\mathrm{dist}(Q, \R^n \setminus Q_0)}$ for some dimensional constant $c$. Let $u \in \W^{1,2}(\R^n; \IC^m)$ and split $u = u_1 + u_2$, where $u_1:= \varphi u \in \W^{1,2}_0(Q_0; \IC^m)$ and $u_2 := (1-\chi)u$. Accordingly, we split
\begin{align*}
 \int \widetilde A \nabla u \cdot  \con{\nabla u}
= \int \widetilde A \nabla u_1 \cdot  \con{\nabla u_1} + \int \widetilde A \nabla u_2 \cdot  \con{\nabla u_2} + \int (\widetilde A \nabla u_1 \cdot  \con{\nabla u_2} + \widetilde A \nabla u_2 \cdot  \con{\nabla u_1})
=:\text{I} + \text{II} + \text{III}.
\end{align*}
First, by assumption on $A$ and since $\sigma \geq 0$, we have $\Re \text{I} \geq \lambda \|\nabla u_1\|_2^2 - \kappa \|u_1\|_2^2$. Second, since $u_2$ vanishes on $Q$, we get $\Re \text{II} \geq (\sigma - \|A\|_\infty) \|\nabla u_2\|_2^2$ from the definition of $\widetilde A$. At last, again by definition of $\widetilde A$, we have
\begin{align*}
 \Re \text{III} \geq - 2 \|A\|_\infty \|\nabla u_1\|_2 \|\nabla u_2\|_2 + 2 \sigma \Re \int \nabla u_1 \cdot \con{\nabla u_2}.
\end{align*}
Expanding 
\begin{align*}
 \nabla u_1 \cdot \con{\nabla u_2} = (\varphi \nabla u + u \nabla \varphi) \cdot \con{((1-\varphi) \nabla u - u \nabla \varphi)}
\end{align*}
and making the key observation that $\Re (\varphi \nabla u \cdot \con{(1-\varphi) \nabla u}) = \varphi(1-\varphi) |\nabla u|^2$ is non-negative almost everywhere by the choice of $\varphi$, we see that for some constant $C>0$ depending only on $n$, $m$ and $\mathrm{dist}(Q, \R^n \setminus Q_0)$,
\begin{align*}
 \Re \int \nabla u_1 \cdot \con{\nabla u_2} \geq - C \|u\|_2 \|\nabla u\|_2 - C \|u\|_2^2.
\end{align*}
Summing up, we discover
\begin{align*}
 \int \widetilde A \nabla u \cdot  \con{\nabla u}
 &\geq \lambda \|\nabla u_1\|_2^2 +  (\sigma - \|A\|_\infty) \|\nabla u_2\|_2^2 - 2 \|A\|_\infty \|\nabla u_1\|_2 \|\nabla u_2\|_2 \\
 &\qquad- 2 \sigma C \|u\|_2 \|\nabla u\|_2 - (2 \sigma C + \kappa) \|u\|_2^2.
\end{align*}
Note that $\|\nabla u\|_2^2 \leq 2 \|\nabla u_1\|_2^2 + 2 \|\nabla u_2\|_2^2$ as a consequence of $u = u_1 + u_2$. Hence, we can fix $\sigma$ large enough depending on $\lambda$ and $\|A\|_\infty$ and apply Young's inequality to deduce
\begin{align*}
 \int \widetilde A \nabla u \cdot  \con{\nabla u} \geq \frac{\lambda}{2} (\|\nabla u_1\|_2^2 + \|\nabla u_2\|_2^2) - K \|u\|_2^2,
\end{align*}
where $K$ depends on all the other (by now fixed) parameters. The same estimate on the gradients as before yields the claim.
\end{proof}
}
%%%%%%%%%%%%%%%%%%%%%%%%%%%%%%%%%%%%%%%%%%%%%%%%%%%%%%%%%%%%%%%%%%%%%%%%%%%%%%%%%%%%%%%%%%%%%%%%%%%%%%%%%%%%%%%%%%%%%%%%%%%%%%%%%%%%%%%%%%%%%%%%%%%%%%%%%%%%%%%%%%%%%%%%
\def\cprime{$'$} \def\cprime{$'$} \def\cprime{$'$}

\setlength{\parindent}{0pt}
\end{document}